\documentclass[12pt,a4paper,leqno]{article}

\usepackage[centertags]{amsmath}
\usepackage{amsthm}
\usepackage[all]{xy}
\usepackage{amssymb,exscale}
\usepackage{bm}
\usepackage{paralist}
\usepackage{latexsym}
\usepackage{graphicx}
\usepackage[perpage,multiple]{footmisc}
\usepackage{mathtools}
\usepackage{appendix}
\usepackage{hyperref}

\allowdisplaybreaks

\swapnumbers
\theoremstyle{definition}
\newtheorem{theorem}[equation]{Theorem}
\newtheorem{lemma}[equation]{Lemma}

\newtheorem{definition}[equation]{Definition}
\newtheorem{proposition}[equation]{Proposition}
\newtheorem{remark}[equation]{Remark}

\newcommand\fnsep{\textsuperscript{,}}

\renewcommand{\phi}{\varphi}

\newcommand{\I}{{\rm i}}
\newcommand{\D}{\mathrm{d}}
\newcommand{\E}{\mathrm{e}}

\newcommand{\ti}{\tilde}

\renewcommand{\(}{\bigl(}
\renewcommand{\)}{\bigr)\vphantom{)}}

\newcommand{\doT}{{\boldsymbol{\cdot}}}

\renewcommand{\equiv}{\;\>\Longleftrightarrow\;\>}
\renewcommand{\Re}{\operatorname{Re}}
\renewcommand{\Im}{\operatorname{Im}}

\newcommand{\gEuler}{\ga_{\text{Euler}}}
\newcommand{\fnz}{\operatorname{fnz}}

\newcommand{\Leak}{\operatorname{Lk}}

\newcommand{\sgn}{\operatorname{sgn}}
\newcommand{\const}{\operatorname{const}}

\newcommand{\One}{{1\hskip-2.5pt{\rm l}}}

\newcommand{\ccirc}{\mathbin{\kern-0.13em\circ}}

\newcommand{\bal}{{\boldsymbol{\alpha}}}

\newcommand{\eps}{\varepsilon}

\newcommand{\si}{\sigma}
\newcommand{\ga}{\gamma}

\newcommand{\om}{\omega}
\newcommand{\Om}{\Omega}
\newcommand{\de}{\delta}
\newcommand{\De}{\Delta}
\newcommand{\al}{\alpha}
\newcommand{\be}{\beta}
\newcommand{\cO}{\mathcal O}

\newcommand{\la}{\lambda}

\newcommand{\Ex}{\mathbb E\,}
\newcommand{\R}{\mathbb R}
\newcommand{\C}{\mathbb C}
\newcommand{\Z}{\mathbb Z}
\newcommand{\T}{\mathbb T}
\renewcommand{\Pr}[1]{\mathbb{P}\mskip1.5mu\(\mskip1.5mu#1\mskip1.5mu\)}
\newcommand{\PR}[1]{\mathbb{P}\mskip1.5mu\bigg(\mskip1.5mu#1\mskip1.5mu\bigg)}

\newcommand{\cE}[2]{\mathbb{E}\mskip1.5mu\(\mskip1.5mu#1\mskip1.5mu
 \big|\mskip1.6mu#2\mskip1.6mu\)}
\newcommand{\CE}[2]{\mathbb{E}\mskip1.5mu\Big(\mskip0.8mu#1\mskip1.5mu
 \Big|\mskip1.6mu#2\mskip1.5mu\Big)}

\newcommand{\VM}{\operatorname{VM}}
\newcommand{\mes}{\mathrm{mes}}

\newcommand{\wC}{w_{\scriptscriptstyle \C}}
\newcommand{\gC}{\gamma_{\scriptscriptstyle \C}}
\newcommand{\gR}{\gamma_{\scriptscriptstyle \R}}
\newcommand{\Fswap}{F_{\text{swap}}}

\newcommand{\sif}{$\sigma$\nobreakdash-\hspace{0pt}field}
\newcommand{\transformation}[1]{$(#1)$\nobreakdash-\hspace{0pt}transformation}
\newcommand{\component}[1]{$#1$\nobreakdash-\hspace{0pt}component}
\newcommand{\dimensional}[1]{$#1$\nobreakdash-\hspace{0pt}dimensional}
\newcommand{\splt}[1]{$#1$\nobreakdash-\hspace{0pt}split}
\newcommand{\splittable}[1]{$#1$\nobreakdash-\hspace{0pt}splittable}

\newcommand{\splittability}[1]{$#1$\nobreakdash-\hspace{0pt}splittability}
\newcommand{\leak}[1]{$#1$\nobreakdash-\hspace{0pt}leak}
\newcommand{\valued}[1]{$#1$\nobreakdash-\hspace{0pt}valued}
\newcommand{\noisy}[1]{$#1$\nobreakdash-\hspace{0pt}noisy}
\newcommand{\noisiness}[1]{$#1$\nobreakdash-\hspace{0pt}noisiness}

\newcommand{\myatop}[2]{{\genfrac{}{}{0pt}{}{#1}{#2}}}

\DeclareMathOperator*{\esssup}{ess\,sup}

\begin{document}

\title{Linear response and moderate deviations:\\ hierarchical
  approach. V}

\author{Boris Tsirelson}

\date{}
\maketitle

\begin{abstract}
The Moderate Deviations Principle (MDP) is well-understood for sums of
independent random variables, worse understood for stationary random
sequences, and scantily understood for random fields.
Here it is established for some planary random fields of the form $ X_t =
\psi(G_t) $ obtained from a Gaussian random field $ G_t $ via a function $ \psi
$, and consequently, for zeroes of the Gaussian Entire Function.
\end{abstract}

\setcounter{tocdepth}{2}
\tableofcontents

\pagebreak

\numberwithin{equation}{section}

\section[Introduction]
  {\raggedright Introduction}
\label{sect1}
We start with an application of splittability (defined in \cite{IV}) to random
complex zeroes.

The theory of random complex zeroes, surveyed by Nazarov and Sodin in Part 1 of
\cite{NS-ICM} (see also \cite{NS-AMS}), investigates, on one hand, asymptotic
probability distributions of relevant random variables (asymptotic normality
\cite[Sect.~1.3.1]{NS-ICM}, \cite[Sect.~1.3]{NS-IMRN} and its violation
\cite[Sect.~1.6]{NS-IMRN}, \cite[Sect.~1.3.2]{NS-ICM}), and on the other hand,
small probabilities of relevant rare events (\cite[Sect.~1.4]{NS-ICM}), first of
all, large and moderate deviations for relevant asymptotically normal random
variables.

The Gaussian Entire Function (G.E.F., for short)
\begin{equation}\label{1.1}
F(z) = \sum_{k=0}^\infty \zeta_k \frac{z^k}{\sqrt{k!}} \, ,
\end{equation}
where $ \zeta_0, \zeta_1, \dots $ are i.i.d.\ complex-valued random variables,
each having a two-dimensional rotation-invariant normal distribution with $ \Ex
|\zeta_k|^2 = 1 $, is distinguished by the distribution invariance of its
(random) set of zeroes $ Z_F = \{ z : F(z)=0 \} $ w.r.t.\ isometries of the
complex plane (see \cite[before
Sect.~1]{NS-ICM}, \cite[Introduction]{NS-IMRN}, \cite[p.~376]{NS-AMS}). The
counting measure $ n_F $ on $ Z_F $ satisfies $ n_F = \frac1{2\pi} \De \log |F|
$ (with the Laplacian taken in the sense of distributions). The centered (that
is, of mean zero) stationary random field $ X_z = \log |F(z)| - \frac12 |z|^2
+ \frac12 \gEuler $\,\footnote{
 See \cite[Lemma 2.1]{NS-IMRN}; here $ \gEuler = 0.577\dots $ is the
 Euler(-Mascheroni) constant.}
appears to be splittable (see Theorem \ref{5.25}). Thus, by \cite[Theorem 1.5
and Corollary 1.6]{IV}, for every continuous compactly supported function $ h
: \C \to \R $ holds
\begin{gather*}
\label{1.2}\stepcounter{equation}
\tag*{$\myatop{\displaystyle(1.2)\hfill}{\text{\footnotesize linear response}\hfill}$}
\lim_\myatop{ r\to\infty, \la\to0 }{ \la\log^2 r \to 0 } \frac1{ r^2 \la^2}
 \log \Ex \exp \la \int_\C \!\! h \Big( \frac1r z \Big) X_z \, \D z
 = \frac12 \|h\|^2 \si_X^2, \\
\label{1.3}\stepcounter{equation}
\tag*{$\myatop{\displaystyle(1.3)\hfill}{\text{\footnotesize moderate deviations}\hfill}$}
\lim_{\textstyle\myatop
 { r\to\infty, c\to\infty }
 { ( c\log^2 r ) / r \to 0 }
}
\frac1{c^2} \log \PR{ \! \int_\C \!\! h \Big( \frac1r z \Big) X_z \, \D
 z \ge c \|h\| \si_X r \!\! } = -\frac12,
\end{gather*}
where $ \|h\|^2 = \int_{\C} h^2(z) \, \D z $,\footnote{%
 Here $ \D z $ means $ \D(\Re z) \D(\Im z) $, that is, Lebesgue measure on $\C$
 rather than the usual complex-valued 1-form $dz$ on $\C$ (the 1-form is never
 used in this text).}
and
\begin{equation}
\si_X = \frac12 \sqrt{ \pi \zeta(3) }
\end{equation}
(where $ \zeta(\cdot) $ is Riemann's zeta-function), as we'll see soon.
Also, by \cite[Corollary 1.7]{IV}, the distribution of the random variable
$ \frac1r \int_\C h(\frac1r z) X_z \, \D z $ converges (as $ r\to\infty $) to
the normal distribution $ N(0,\|h\|^2 \si_X^2) $ (asymptotic normality).

Given a compactly supported $C^2$-smooth function $ h : \C \to \R $, we denote
the so-called linear statistic (see \cite[Sect.~1]{NS-ICM})
$ \sum_{z\in Z_F} h\( \frac1r z \) = \int_\C h\( \frac1r z \) n_F(\D z) $ by $
n_F(r,h) $, or just $ n(r,h) $. Taking into account that $ n_F
= \frac1{2\pi} \De \log |F| = \frac1{2\pi} ( 2 + \De X ) $ we have $ n(r,h)
= \frac1{2\pi} \int_\C h\( \frac1r z \) ( 2 + \De X_z ) \, \D z
= \frac{r^2}\pi \int_\C h(z) \, \D z + \frac1{2\pi r^2} \int_\C \De h ( \frac1r
z) X_z \, \D z $.\footnote{%
 Here $ \De h ( \frac1r z) $ denotes the Laplacian of $h$ at $ \frac1r z $; the
 Laplacian of the function $ z \mapsto h ( \frac1r z) $ at $z$ is rather
 $ \frac1{r^2} \De h ( \frac1r z) $.}
The first term is the expectation, $ \Ex n(r,h) $; the second term is the
deviation, $ n(r,h) - \Ex n(r,h) $.

Now we transfer the linear response, moderate deviations, and asymptotic
normality from $X$ to $ n_F - \Ex n_F $, getting
\begin{gather*}
\label{1.5}\stepcounter{equation}
\tag*{$\myatop{\displaystyle(1.5)\hfill}{\text{\footnotesize linear response}\hfill}$}
\lim_\myatop{ r\to\infty, \la\to0 }{ \la \log^2 r \to 0 }
\frac1{ r^2 \la^2 } \log \Ex \exp \la r^2 \( n(r,h) - \Ex n(r,h) \) =
\frac{ \si^2 }2 { \| \De h \|^2 } \, , \\
\label{1.6}\stepcounter{equation}
\tag*{$\myatop{\displaystyle(1.6)\hfill}{\text{\footnotesize moderate deviations}\hfill}$}
\lim_\myatop{ r\to\infty, c\to\infty }{ (c\log^2 r)/r \to0 }
\! \frac1{c^2} \log \PR{ \! n(r,h) - \Ex n(r,h) \ge \frac{ c\si }{ r } \| \De h \| \! }
= -\frac12 , \!\!\!
\end{gather*}
where $ \si = \frac1{2\pi} \si_X $.
Also, the distribution of the random variable $ r n(r,h) $ converges (as $
r\to\infty $) to the normal distribution $ N(0,\|\De h\|^2 \si^2) $ (asymptotic
normality). All moments also converge. Comparing it with \cite[(1.1)]{NS-ICM} we
get $ \si = \frac14 \sqrt{ \frac{\zeta(3)}{\pi} } $, and hence $ \si_X
= \frac12 \sqrt{ \pi \zeta(3) } $ as promised above.

The moderate deviations formula \hyperref[1.6]{(1.6)} and the asymptotic
normality above are combined in \cite[Th.~1.4]{NS-ICM}.\footnote{%
 There, however, $ n(r,h) $ is written, mistakenly, instead of $ n(r,h) - \Ex
 n(r,h) $.}
They both, and \hyperref[1.5]{(1.5)} as well, were published
in \cite[Introduction]{2008} without the value of $\si$, and with rather
sketchy proofs. Detailed (and hopefully, understandable) proofs are available
now.

The crucial point above is splittability of the random field $ X_z $. It is
ensured by sufficient conditions for splittability that are main results of this
paper. For $ z = t_1+\I t_2 $ we have $ X_z = \log |F(z)| - \frac12 |z|^2
+ \frac12 \gEuler = \psi(G_{t_1,t_2}) $ where $ G_{t_1,t_2} = F^*(z) =
F(z) \E^{-|z|^2/2} $ is a complex-valued Gaussian random field on $ \R^2 $, and
$ \psi(z) = \log |z| + \frac12 \gEuler $. This function $ \psi : \C \to \R $
satisfies the ``smeared Lipschitz condition'' (see Lemmas \ref{3.5}, \ref{3.15})
\begin{equation}\label{1.*}
\log \int_{\R^2} \exp | \psi(x+y) - \psi(x-y) | \, \ga_r(\D x) \le \const_\si
|y|
\end{equation}
for all $ y,r \in \R^2 $; here $ \ga_r(\D x) = (2\pi\si^2)^{-1} \exp \( -|x-r|^2
/ (2\si^2) \) \, \D x $ for a given $ \si \in (0,\infty) $; the constant depends
on $\si$ only.

The real-valued non-Gaussian random field $ \psi(G) $ is stationary, but the
complex-valued Gaussian random field $G$ is not. This difficulty is treated in
Sect.~\ref{5c}. For now we restrict ourselves to stationary real-valued $G$ of
the form $ G = \phi * w $ where $w$ is the white noise; that is,
$ G_t = \int_{\R^2} \phi(t-s) w_s \, \D s $,
or, in terms of the covariance function,
\[
\Ex G_t G_{t+r} = \Ex G_0 G_r = \int_{\R^2} \phi(-s) \phi(r-s) \, \D s
= \int_{\R^2} \phi(s) \phi(s+r) \, \D s
\]
for all $ t,r \in \R^2 $; and $ \Ex G_t = 0 $, of course. It appears that the
condition
\begin{equation}\label{1.**}
\esssup_{t\in\R^2} (1+|t|)^\bal \phi^2(t) < \infty \qquad \text{for some } \bal
> 6
\end{equation}
on $\phi$ (together with Condition \eqref{1.*} on $\psi$) is sufficient for
splittability of $ \psi(G) $ (see Theorem \ref{5.14}).

The first part, ``dimension one'' (Sections \ref{sect2}, \ref{sect3}), is
intended to be a helpful preparation to the main, second part ``dimension two''
(Sections \ref{sect4}, \ref{sect5}).

\section[Dimension one: basics]
  {\raggedright Dimension one: basics}
\label{sect2}
In this section, by a \emph{random process} we mean a special case, for $d=1$,
of a random field on $\R^d$ as defined in \cite[Sect.~1]{IV}; that is, a random
process is a random measure on $\R$; and by \emph{measure} we mean a locally
finite signed Borel measure (unless stated otherwise); by $\VM$ we denote the
space of all such measures\footnote{%
 Why ``$\VM$''? Since these are vector measures with values in
 the \dimensional{1} space $\R$; more general case will be needed in the next
 section.}
(a vector space endowed with a \sif, see \cite[Sect.~1]{IV}). Thus, a random
process $X$ is a measurable map  $X:\Om\to\VM$ denoted (if only for convenience)
by $ B \mapsto \int_B X_t \, \D t $ (rather than $ X(\om)(B) $).

By a \emph{transformation} we mean a measurable map $ F : \VM \to \VM $.

An example: convolution, $ \nu \mapsto \mu*\nu $, with a given compactly
supported $ \mu \in \VM $ is a transformation. A shift is a special case of the
convolution (when $\mu$ is the unit mass at a single point). On the other hand,
a shift is a special case of a transformation $F_\al$ that corresponds to a
homeomorphism $ \al : \R \to \R $ by $ (F_\al \nu)(B) = \nu(\al(B)) $.

These transformations are linear (on the space of measures). The variation is an
example of a nonlinear transformation. Here is a more interesting
example. Taking an absolutely continuous $ \mu \in \VM $ we get an absolutely
continuous convolution $ \mu * \nu $,
\[
\frac{\D(\mu*\nu)}{\D\,\mes} = \frac{\D\mu}{\D\,\mes} * \nu \qquad \text{ ($\mes$
being the Lebesgue measure)}
\]
and we may take
\begin{equation}\label{*}
\frac{\D F(\nu)}{\D\,\mes} (t) = \psi \Big( \frac{\D(\mu*\nu)}{\D\,\mes} (t) \Big)
\end{equation}
for a given bounded Borel measurable function $ \psi : \R \to \R $.

Given a random process $ X : \Om \to \VM $ and a transformation $F$, we get
another random process $ \Om \to \VM $; it could be denoted by $ F \circ X $,
but we prefer to denote it by $ F(X) $.

\begin{sloppypar}
Accordingly, the shift of $X=(X_t)_{t\in\R}$ by $s$ is denoted by
$(X_{t+s})_{t\in\R}$; the convolution of $X=(X_t)_{t\in\R}$ and $\mu$ is the process
$ \(\int X_{t-s} \mu(\D s) \)_{t\in\R} $; transformation \eqref{*} gives
$ \( \psi \( \int X_{t-s} \mu(\D s) \) \)_{t\in\R} $; the variation of $X$ may
be denoted by $(|X_t|)_{t\in\R}$; etc.
\end{sloppypar}

If $X$ and its shift $(X_{t+s})_{t\in\R}$ are identically distributed (for every
$ s \in \R $), we say that $X$ is stationary.

Recall that a split\footnote{%
 \cite[Def.~1.1]{IV}.}
of $X$ is a triple of random processes $ X^0, X^-, X^+ $ (on some probability
space $ \ti\Om $ possibly different from $ \Om $) such that the two random
processes $ X^-, X^+ $ are (mutually) independent and the four random processes
$ X, X^0, X^-, X^+ $ are identically distributed.

Note that only distributions matter, namely, the distribution of $X$ and the
distribution of a split, that is, the joint distribution of $ X^0, X^-, X^+
$. Thus, for a stationary $X$, a split of $X$ is also a split of
$(X_{t+s})_t$. Note also that a split of a stationary process need not be
stationary (and in fact, stationary splits are useless).

If $ X,Y $ are identically distributed, then $ F(X) $ and $ F(Y) $ are
identically distributed (evidently; for arbitrary transformation $F$).

If $ X,Y $ are independent, then $ F(X) $ and $ F(Y) $ are independent
(evidently).\footnote{%
 This generalizes readily to $ F_1(X_1), F_2(X_2), \dots $}

Therefore, if $ (X^0,X^-,X^+) $ is a split
of $X$, then
\begin{equation}\label{2.15}
\( F(X^0), F(X^-), F(X^+) \) \quad \text{is a split of} \quad F(X) \, .
\end{equation}
In particular, the shifted split $ \( (X^0_{t+s})_t, (X^-_{t+s})_t,
(X^+_{t+s})_t \) $ is a split of the shifted process $ (X_{t+s})_t $. 

Recall that the leak $Y$ of a split $ (X^0,X^-,X^+) $ of $X$ is defined
by\footnote{%
 \cite[Def.~1.2]{IV}.}\fnsep\footnote{%
 Here and throughout, $ \sgn t $ is $-1$ for $t<0$ and $+1$ for $t\ge0$.}
\[
Y_t = X_t^{\sgn t} - X_t^0 = \begin{cases}
  X^-_t - X^0_t &\text{when } t<0,\\
  X^+_t - X^0_t &\text{when } t\ge0;
\end{cases}
\]
more formally, $ \int_A Y_t \, \D t = \int_A (X^-_t-X^0_t) \, \D t $ for
every bounded Borel measurable set $ A \subset (-\infty,0) $, and $ \int_A
Y_t \, \D t = \int_A (X^+_t-X^0_t) \, \D t $ for every bounded Borel measurable
set $ A \subset [0,\infty) $.

In addition, we say that $Y$ is a \emph{leak for} $X$, if there exists a split
of $X$ such that the leak of this split is distributed like $Y$.\footnote{%
 That is, $Y$ and this leak are identically distributed.}

Informally, a split is useful when its leak is small.

Note that the leak $ ( X^{\sgn t}_{t+s} - X^0_{t+s} )_t $ of the shifted split
differs from the shifted leak $ (Y_{t+s})_t = ( X^{\sgn (t+s)}_{t+s} - X^0_{t+s}
)_t $ even if $X$ is stationary. Shifting back the former (the leak of the
shifted split) we get the process $ { ( X^{\sgn (t-s)}_t - X^0_t )_t } $; let us
call it the leak (of the split $ (X^0,X^-,X^+) $ of $X$) around $s$, or a leak
for $X$ around $s$.

Can we obtain the leak
$Y_1$ of $ \(F(X^0),F(X^-),F(X^+)\) $ as $ F(Y_0) $ where $ Y_0 $ is a leak of
$(X^0,X^-,X^+)$? The answer is negative in general (see below), but affirmative
in a useful special case, when $F$ is just a multiplication:
\[
F(X) = fX \, , \quad \text{that is, } F(X) = \( f(t) X_t \)_{t\in\R}
\]
where $ f : \R \to \R $ is locally bounded and Borel measurable. (More formally,
we multiply a measure $ \nu \in \VM $ by the function $f$ as follows: $ f\nu :
B \mapsto \int_B f \, \D\nu $.) The leak of $ ( fX^0, fX^-, fX^+ ) $ is equal to
$ fY_0 $ where $ Y_0 $ is a leak of $(X^0,X^-,X^+)$, see \cite[Lemma
2.2]{IV}. This happens due to linearity and locality of the transformation.

Beyond this special case it may happen that $Y_0$ vanishes (always, everywhere),
while $Y_1$ does not, even for a linear\footnote{%
 But not local.}
$F$ and stationary $X$. For example, let $X$ be the centered Poisson point
process,\footnote{%
 See \cite[page 2]{I}}
and $F$ the convolution with the Lebesgue measure on $(0,1)$. This $X$ has a
leakless split due to independent restrictions of $X$ to $(-\infty,0)$ and
$[0,\infty)$, while $F(X)$ has not (because its restrictions to $(-\infty,0)$
and $[0,\infty)$ fail to be independent).

Nevertheless, in dimension $1$ we can use the split $ \( F(X^0), F(X^-),
F(X^+) \) $ of $F(X)$ for proving that, under some conditions,
splittability of $X$ implies splittability of $ F(X) $, by checking the
inequality $ \Ex \exp \int_{\R} |Y_t| \, \D t \le 2 $ for the leak $Y$ of $ \(
F(X^0), F(X^-), F(X^+) \) $. Few results of this kind follow after some
preparation.

The definition of splittability \cite[Def.~1.3, 1.4]{IV}, formulated for random
fields, simplifies a lot in the special case of random processes (dimension
one). In this case we have (in that definition) $ d=1 $, $ k=1 $, and (in Item
(c) there) $ i=1 $, $\ti X_t = X_{t+r} $, $ Y_t = \ti Y_t $, and the
definition boils down to the following.

\begin{definition}\label{2.spli}
A random process $ (X_t)_{t\in\R} $ is \splittable{1}, if

(a) for every $ s \in \R $,
\[
\Ex \exp \int_{[s,s+1)} |X_t| \, \D t \le 2 \, ;
\]

(b) for all $ t \in \R $,
\[
\Ex X_t = 0
\]
(more formally, $ \Ex \int_A X_t \, \D t = 0 $ for every bounded Borel
measurable set $ A \subset \R $);

(c) for every $ r \in \R $ there exists a leak $Y$ for $(X_{t+r})_{t\in\R} $
such that
\[
\Ex \exp \int_\R |Y_t| \, \D t \le 2 \, .
\]

In addition, $X$ is \splittable{C} if $ \frac1C X = \(\frac1C X_t\)_{t\in\R} $
is \splittable{1}; and $X$ is splittable, if it is \splittable{C} for some $
C \in (0,\infty) $.
\end{definition}

\begin{remark}\label{2.rem}
For a stationary random process $X$ the definition simplifies further:

(a) $ \Ex \exp \int_{[0,1)} |X_t| \, \D t \le 2 $;

(b) $ \Ex \int_{[0,1)} X_t \, \D t = 0 $ (or just $ \Ex X_0 = 0 $, if $ X_0 $ is
well-defined);

(c) $ \Ex \exp \int_\R |Y_t| \, \D t \le 2 $ for some leak $ Y $ for $ X $.
\end{remark}

\begin{proposition}\label{2.1}
Let $ X = (X_t)_{t\in\R} $ be a splittable random process, and $ Y_t = |X_t|
- \Ex |X_t| $. Then $ Y = (Y_t)_{t\in\R} $ is a splittable random process.
\end{proposition}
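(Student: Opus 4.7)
The plan is to verify the three conditions of Definition~\ref{2.spli} for $Y/2$; this shows $Y$ is $2$-splittable, hence splittable. Without loss of generality I assume $X$ is already $1$-splittable, since the hypothesis and conclusion respect scaling: $Y_t/C = |X_t/C|-\Ex|X_t/C|$. I write $m_t=\Ex|X_t|$ and view $Y=|X|-m$ as the signed measure $|X|$ (the total variation of $X$) minus the deterministic, locally finite measure $m$.

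Condition (b) is immediate. For condition (a), the total-variation triangle inequality gives $\int_{[s,s+1)}|Y_t|\,\D t\le\int_{[s,s+1)}|X_t|\,\D t+m([s,s+1))$, and Jensen applied to condition (a) for $X$ bounds the deterministic summand:
\[
m([s,s+1))=\Ex\int_{[s,s+1)}|X_t|\,\D t\le\log\Ex\exp\int_{[s,s+1)}|X_t|\,\D t\le\log 2.
\]
Dividing by $2$ and using Jensen on $x\mapsto\sqrt x$ then gives
\[
\Ex\exp\int_{[s,s+1)}\tfrac12|Y_t|\,\D t\le\sqrt 2\cdot\sqrt{\Ex\exp\int_{[s,s+1)}|X_t|\,\D t}\le 2.
\]

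For condition (c), given $r\in\R$ I pick a split $(X^0,X^-,X^+)$ of $(X_{t+r})_t$ whose leak $W$ satisfies $\Ex\exp\int_\R|W_t|\,\D t\le 2$, and apply the transformation $F(\nu)=|\nu|-m'$, where $m'$ is the mean-variation measure of $(X_{t+r})_t$, to each component. By \eqref{2.15}, $(F(X^0),F(X^-),F(X^+))$ is a split of $F((X_{t+r})_t)=(Y_{t+r})_t$; its leak is $V_t=|X_t^{\sgn t}|-|X_t^0|$ (the deterministic $m'$ cancels between the central and side pieces). The key estimate is the measure-valued reverse triangle inequality: for signed measures $\mu,\nu$ on $\R$, $\bigl||\mu|-|\nu|\bigr|\le|\mu-\nu|$ as positive measures, where $|\cdot|$ denotes total variation throughout. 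Granted this, $\int_\R|V_t|\,\D t\le\int_\R|W_t|\,\D t$ by applying the inequality separately on $(-\infty,0)$ and $[0,\infty)$, so $\Ex\exp\int_\R\tfrac12|V_t|\,\D t\le\sqrt 2<2$.

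The main obstacle is justifying the reverse triangle inequality $\bigl||\mu|-|\nu|\bigr|\le|\mu-\nu|$ directly at the level of signed measures, since $X$ need not have a pointwise density and $|X|$ denotes its total variation. The inequality follows from $\bigl||\mu|(A)-|\nu|(A)\bigr|\le|\mu-\nu|(A)$ on every Borel set $A$ (immediate from $|\mu|(A)=\sup_\pi\sum|\mu(A_i)|$ and the reverse triangle inequality cell-by-cell), combined with the observation that $|\mu-\nu|$ is already positive, so summing over any Borel partition preserves the inequality and hence controls the total variation of the signed measure $|\mu|-|\nu|$.
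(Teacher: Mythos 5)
Your proof is correct and follows essentially the same strategy as the paper's: bound $|Y_t|\le|X_t|+m_t$ for (a), and use the reverse triangle inequality $\bigl||\ti X^{\pm}_t|-|\ti X^0_t|\bigr|\le|\ti X^{\pm}_t-\ti X^0_t|$ on the transported split for (c), establishing $2$-splittability from $1$-splittability. The only cosmetic difference is in (a), where you bound $m([s,s+1))\le\log 2$ and apply Cauchy--Schwarz while the paper uses the convexity estimate $\exp\tfrac12(a+b)\le\tfrac12\exp a+\tfrac12\exp b$; your last paragraph spelling out the measure-level reverse triangle inequality is a helpful supplement to what the paper handles via informal pointwise notation.
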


Here, by $ Y_t = |X_t| - \Ex |X_t| $ we mean that for every bounded Borel
measurable set $ B \subset \R $ holds $ \int_B Y_t \, \D t = \int_B |X_t| \, \D
t - \Ex \int_B |X_t| \, \D t $ where $ \int_B |X_t| \, \D t $ is the variation
on $B$ of the given measure. Finiteness of $ \Ex \int_B |X_t| \, \D t $ is
ensured by splittability of $X$, see Def.~\ref{2.spli}(a).

\begin{proof} \let\qed\relax
We prove that, moreover, if $X$ is \splittable{C}, then $Y$
is \splittable{2C}. WLOG, $ C=1 $. Denote $ m_t = \Ex |X_t| $, then $ Y_t =
|X_t| - m_t $. We inspect Def.~\ref{2.spli}(a,b,c).

(a) We know that $ \Ex \exp \int_{[s,s+1)} |X_t| \, \D t \le 2 $, and check
that\linebreak
$ \Ex \exp \frac12 \int_{[s,s+1)} |Y_t| \, \D t \le 2 $. We have $
|Y_t| \le |X_t| + m_t $;\,\footnote{%
 More formally: the variation of the sum (or difference) of two signed measures
 does not exceed the sum of variations.}
\begin{multline*}
\exp \frac12 \int_{[s,s+1)} |Y_t| \, \D t \le \exp \frac12 \Big( \int_{[s,s+1)}
 |X_t| \, \D t + \int_{[s,s+1)} m_t \, \D t \Big) \le \\
\le \frac12 \exp \int_{[s,s+1)}
 |X_t| \, \D t + \frac12 \exp \int_{[s,s+1)} m_t \, \D t
\end{multline*}
by convexity of $\exp$; take the expectation and note that $ \exp \int_{[s,s+1)}
m_t \, \D t = \exp \Ex \int_{[s,s+1)} |X_t| \, \D t \le \Ex \exp \int_{[s,s+1)}
|X_t| \, \D t $.

(b) Trivial.

(c) The shifted process $ \ti X = (X_{t+r})_t $ has a split $ (\ti X^0, \ti
X^-, \ti X^+) $ such that $ \Ex \exp \( \int_{(-\infty,0)} | \ti X^-_t - \ti
X^0_t | \, \D t + \int_{[0,\infty)} | \ti X^+_t - \ti X^0_t | \, \D t \) \le 2
$; we need such a split of the process $ \frac12 \ti Y $ where $ \ti Y_t =
|\ti X_t| - m_{t+r} $. We take $ \ti Y^0_t = | \ti X^0_t| - m_{t+r} $, $ \ti Y^-_t =
| \ti X^-_t| - m_{t+r} $, $ \ti Y^+_t = | \ti X^+_t| - m_{t+r} $, then $ (\ti
Y^0, \ti Y^-, \ti Y^+) $ is a split of $\ti Y$. We have $ | \ti Y^-_t - \ti Y^0_t |
= \big| \( |\ti X^-_t|-m_{t+r} \) - \( |\ti X^0_t|-m_{t+r} \) \big| = \big| |\ti
X^-_t| - |\ti X^0_t| \big| \le | \ti X^-_t - \ti X^0_t | $, and similarly $
| \ti Y^+_t - \ti Y^0_t | \le | \ti X^+_t - \ti X^0_t | $, whence (waiving the
$\frac12$)
\begin{multline*}
\int_{(-\infty,0)} | \ti Y^-_t - \ti Y^0_t | \, \D t + \int_{[0,\infty)} | \ti
 Y^+_t - \ti Y^0_t | \, \D t \le \\
\le \int_{(-\infty,0)} | \ti X^-_t - \ti X^0_t | \, \D t + \int_{[0,\infty)} | \ti
 X^+_t - \ti X^0_t | \, \D t \, .  \qquad\qquad \rlap{$\qedsymbol$}
\end{multline*}
\end{proof}

\begin{proposition}\label{2.2}
Let $\mu$ be a finite signed measure on $\R$ such that $ \int_{\R} |t| \,
|\mu|(\D t) < \infty $ (here $ |\mu| $ is the variation of $\mu$), $ X =
(X_t)_{t\in\R} $ a splittable random process, and $ Y_t = \int_{\R}
X_{t-s} \, \mu(\D s) $. Then $ Y = (Y_t)_{t\in\R} $ is a splittable random
process.
\end{proposition}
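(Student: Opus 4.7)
\medskip
\noindent\textbf{Proof plan.}
The plan is to verify the three conditions of Definition~\ref{2.spli} for $Y/C$, for a suitable $C$ depending on $M:=|\mu|(\R)$ and $M_1:=\int_\R|s|\,|\mu|(\D s)$; WLOG $X$ is \splittable{1}. Condition (b) is immediate from $\Ex X_t=0$ and Fubini. For (a), since $|Y|\le|\mu|*|X|$ as measures, Fubini gives $\int_{[s,s+1)}|Y_t|\,\D t \le \int_\R|X|([s-r,s+1-r))\,|\mu|(\D r)$; treating $|\mu|/M$ as a probability measure and applying Jensen together with (a) for $X$ then yields $\Ex\exp\bigl(M^{-1}\int_{[s,s+1)}|Y_t|\,\D t\bigr)\le 2$.

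The main work is (c). Given $r\in\R$, I would apply splittability of $X$ to the shifted process $\ti X=(X_{t+r})_t$ to obtain a split $(\ti X^0, \ti X^-, \ti X^+)$ with leak $Z$ satisfying $\Ex\exp\int_\R|Z_t|\,\D t\le 2$; by \eqref{2.15}, the triple $(\mu*\ti X^0,\,\mu*\ti X^-,\,\mu*\ti X^+)$ is then a split of $\mu*\ti X=(Y_{t+r})_t$, with leak $L_t=\int_\R(\ti X^{\sgn t}_{t-s}-\ti X^0_{t-s})\,\mu(\D s)$. The key step is to partition the $(t,s)$ domain into a \emph{good} region where $\sgn t=\sgn(t-s)$ (on which the integrand equals $Z_{t-s}$) and a \emph{bad} region where the signs disagree (which forces $s$ to lie strictly between $0$ and $t$). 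By Fubini the good-region contribution to $\int_\R|L_t|\,\D t$ is at most $M\int_\R|Z_u|\,\D u$; on the bad region the triangle inequality $|\ti X^{\sgn t}_{t-s}-\ti X^0_{t-s}|\le|\ti X^{\sgn t}_{t-s}|+|\ti X^0_{t-s}|$ combined with Fubini expresses the contribution as a weighted integral of $|\ti X^\pm_u|, |\ti X^0_u|$ against a nonnegative weight $w$ on $\R$ with $w\le M$ and $\int_\R w(u)\,\D u = M_1$.

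To upgrade this to exponential integrability, I would discretize by unit intervals $[n,n+1)$: each of $\ti X^0,\ti X^\pm$ is distributed like $\ti X$, so condition (a) for $X$ gives $\Ex\exp\int_{[n,n+1)}|\ti X^\epsilon_u|\,\D u\le 2$ for $\epsilon\in\{0,+,-\}$, and the elementary convexity fact that $\Ex\exp\bigl(S^{-1}\sum_n\alpha_n\xi_n\bigr)\le 2$ (whenever $\alpha_n\ge 0$, $\sum_n\alpha_n\le S$, and $\Ex e^{\xi_n}\le 2$) controls the bad-region contribution by a constant depending only on $M$ and $M_1$. Combining the good and bad bounds via Cauchy--Schwarz, for $C$ large enough (depending only on $M$ and $M_1$) we obtain $\Ex\exp\bigl(C^{-1}\int_\R|L_t|\,\D t\bigr)\le 2$, giving (c). The hard part is precisely the bad region: the leak of the induced split of $\mu*\ti X$ does \emph{not} reduce to $\mu*Z$, reflecting the fact that convolution is linear but nonlocal (as flagged earlier by the centered Poisson example); the hypothesis $\int_\R|s|\,|\mu|(\D s)<\infty$ is exactly what keeps the total weight on the bad region finite.
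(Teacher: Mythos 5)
Your proof follows essentially the same route as the paper's: (b) is trivial, (a) bounds $|Y|$ by $|\mu|*|X|$, and (c) pushes the split through the convolution and then cuts the leak integral into a ``good'' region, where the integrand coincides with the leak $Z$ of $X$, and a ``bad'' region, controlled by the first-moment hypothesis on $|\mu|$ together with condition (a) for $X$ and a unit-interval discretization. The paper performs the same computation in (c) after the change of variable $r=t-s$, working with the kernel $r\mapsto|\mu|[-r,\infty)$ split at $r=0$, which is equivalent to your $(t,s)$-partition; your (a) via Jensen with the probability measure $|\mu|/M$ is in fact slightly cleaner than the paper's unit-interval decomposition and gives $M^{-1}$ where the paper settles for $(2M)^{-1}$.

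One small slip in the description of the bad region: when $\sgn t\ne\sgn(t-s)$ it is $t$, not $s$, that is forced to lie strictly between $0$ and $s$ (equivalently $|t-s|<|s|$ with $\sgn(t-s)\ne\sgn s$); for fixed $s$ the bad $t$-slice has Lebesgue measure $|s|$, which is exactly why $\int|s|\,|\mu|(\D s)$ is the controlling quantity. Your subsequent assertion that the resulting weight $w$ on $\R$ satisfies $w\le M$ and $\int_\R w=M_1$ is stated under the correct interpretation, so the slip is cosmetic and the argument goes through.
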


\begin{proof} \let\qed\relax
We inspect Def.~\ref{2.spli}(a,b,c).

(a) We know that $ \Ex \exp \int_{[s,s+1)} |X_t| \, \D t \le 2 $, and check
that\linebreak
$ \Ex \exp \frac1C \int_{[s,s+1)} |Y_t| \, \D t \le 2 $ for $ C \ge
2|\mu|(\R) $. We have $ |Y_t| \le \int_{\R} |X_{t-s}| \, |\mu|(\D s) $;
\begin{multline*}
\int_{[s,s+1)} |Y_t| \, \D t \le \int_{\R} |X_r| \, |\mu| [s-r,s+1-r) \, \D r
 = \sum_{n=-\infty}^\infty \int_{[n,n+1)} \dots \le \\
\le \sum_n |\mu| [s-n-1,s+1-n) \int_{[n,n+1)} |X_r| \, \D r \, ;
\end{multline*}
taking into account that $ \sum_n |\mu| [s-n-1,s+1-n) = 2|\mu|(\R) \le C $ and
using the general inequality
\begin{equation}
\Ex \exp \frac1C \sum_k a_k \xi_k \le \sup_k \Ex \exp \xi_k \quad \text{whenever
} \sum_k a_k \le C
\end{equation}
for arbitrary $ a_k \ge 0 $ and random $ \xi_k \ge 0 $ (it holds for $ C
= \sum_k a_k $ by convexity of $\exp$), we get
\begin{gather*}
\Ex \exp \frac1C \int_{[s,s+1)} \!\!\!\!\! |Y_t| \, \D t \le
\sup_n \Ex \! \exp \int_{[n,n+1)} \!\!\!\!\! |X_r| \, \D r \le 2 \, .
\end{gather*}

(b) Trivial.

(c) The shifted process $ \ti X = (X_{t+r})_t $ has a split $ (\ti X^0, \ti
X^-, \ti X^+) $ such that $ \Ex \exp \( \int_{(-\infty,0)} | \ti X^-_t - \ti
X^0_t | \, \D t + \int_{[0,\infty)} | \ti X^+_t - \ti X^0_t | \, \D t \) \le 2
$; we'll find such a split of the process $ \frac1C \ti Y = (\frac1C Y_{t+r})_t $
for $ C \ge 8 \int_{\R} (|r|+1) \, |\mu|(\D r) $. We take
$ \ti Y^0 = \mu * \ti X^0 $, $ \ti Y^- = \mu * \ti X^- $, $ \ti Y^+ = \mu * \ti
X^+ $, then $ (\ti Y^0, \ti Y^-, \ti Y^+) $ is a split of $\ti Y$.
 
Generally,
$ \Ex \exp \frac12 ( \xi_1 + \xi_2 ) \le \max ( \Ex \exp \xi_1, \Ex \exp \xi_2 ) $.
Thus, the needed inequality $ \Ex \exp \frac1C \( \int_{(-\infty,0)} | \ti Y^-_t
- \ti Y^0_t | \, \D t + \int_{[0,\infty)} | \ti Y^+_t - \ti Y^0_t | \, \D
t \) \le 2 $ reduces to two inequalities $ \Ex \exp \frac2C \int_{(-\infty,0)}
| \ti Y^-_t - \ti Y^0_t | \, \D t \le 2 $, $ \Ex \exp \frac2C \int_{[0,\infty)}
| \ti Y^+_t - \ti Y^0_t | \, \D t \le 2 $. We prove the latter; the former is
similar.

We have $ \ti Y^+ - \ti Y^0 = \mu * (\ti X^+ - \ti X^0) $;
$ | \ti Y^+ - \ti Y^0 | \le |\mu| * |\ti X^+ - \ti X^0| $;
\[
\int_{[0,\infty)} | \ti Y^+_t - \ti Y^0_t | \, \D t \le \int_{\R} |\ti X^+_r - \ti
X^0_r| |\mu| [-r,\infty) \, \D r = \int_{(-\infty,0)} \dots \D r
+ \int_{[0,\infty)} \dots \D r \, ;
\]
we prove that $ \Ex \exp \frac4C \int_{(-\infty,0)} \dots \D
r \le 2 $ and $ \Ex \exp \frac4C \int_{[0,\infty)} \dots \D r \le 2 $.

The latter holds for $ C \ge 4|\mu|(\R) $, since $ \int_{[0,\infty)} |\ti X^+_r
- \ti X^0_r| |\mu| [-r,\infty) \, \D r \le |\mu|(\R) \int_{[0,\infty)} |\ti
X^+_r - \ti X^0_r| \, \D r $. It remains to prove the former for $ C \ge
8 \int_{\R} (|r|+1) \, |\mu|(\D r) $. We have
\begin{multline*}
\int_{(-\infty,0)} |\ti X^+_r - \ti X^0_r| |\mu| [-r,\infty) \, \D r \le
\int_{(-\infty,0)} ( |\ti X^+_r| + |\ti X^0_r| ) |\mu| [-r,\infty) \, \D r \le \\
 \sum_{n=1}^\infty |\mu|[n-1,\infty) \int_{[-n,-n+1)} ( |\ti X^+_r| + |\ti X^0_r| ) \, \D r \, ;
\end{multline*}
$ \sum_{n=1}^\infty |\mu|[n-1,\infty) \le \int_{[0,\infty)} (r+1) \, |\mu|(\D
r) \le \frac C 8 $;
\begin{multline*}
\Ex \exp \frac4C \int\limits_{(-\infty,0)} |\ti X^+_r - \ti X^0_r| |\mu| [-r,\infty) \, \D
 r \le \\
\Ex \exp \frac4C \sum_{n=1}^\infty |\mu|[n-1,\infty)
\int_{(-\infty,0)} ( |\ti X^+_r| + |\ti X^0_r| ) \, \D r \le \\
\sup_n \max \bigg( \Ex \exp \int_{[-n,-n+1)} |\ti X^+_r| \, \D
r, \, \Ex \exp \int_{[-n,-n+1)} |\ti X^0_r| \, \D r \bigg) \le 2 \, . \qquad \rlap{$\qedsymbol$}
\end{multline*}
\end{proof}

The general assumptions on the transformation $F$ may be relaxed. Given $X$, we
may use $F$ defined only on a subset of $ \VM $ provided that $X$ belongs
to this subset almost surely. For example, we may take $ \(F(X)\)_t = \psi(X_t) $
for a given bounded Borel measurable $ \psi : \R \to \R $ provided that random
variables $X_t$ are well-defined; that is, the measure $ \nu : B \mapsto \int_B
X_t \, \D t $ is absolutely continuous almost surely, and $ X_t
= \frac{\D\,\nu}{\D\,\mes} (t) $. Generally, such function $ t \mapsto X_t $ is
defined up to equivalence (that is, arbitrary change on a \emph{random} null
set), but still, the measure $ B \mapsto \int_B \psi(X_t) \, \D t $ is
well-defined (and absolutely continuous). Also, boundedness of $\psi$ may be
relaxed; $ \psi(x) = \cO(|x|) $ for $ |x| \to \infty $ is enough for local
integrability of $ \psi(X_t) $.

\begin{proposition}\label{2.3}
Let $ X = (X_t)_{t\in\R} $ be a splittable random process such that, almost
surely, the measure $ B \mapsto \int_B X_t \, \D t $ is absolutely continuous,
and $ Y_t = \psi(X_t) - \Ex \psi(X_t) $ for a given function $ \psi : \R \to \R
$ that satisfies the Lipschitz condition $ \exists L \; \forall x,y \;\>
|\psi(x)-\psi(y)| \le L |x-y| $. Then $ Y = (Y_t)_{t\in\R} $ is a splittable
random process.
\end{proposition}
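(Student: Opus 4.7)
The plan is to mimic the three-part verification used in Propositions \ref{2.1} and \ref{2.2}: check Definition \ref{2.spli}(a,b,c) for $Y$ assuming $X$ is \splittable{1} (WLOG by rescaling) and showing $Y$ is \splittable{C} for some $C$ depending only on $L$. Throughout, set $m_t = \Ex\psi(X_t)$. Two preliminary remarks will be used repeatedly: first, the Lipschitz assumption gives $|\psi(X_t)| \le |\psi(0)| + L|X_t|$, so the measure $B \mapsto \int_B \psi(X_t)\,\D t$ is well-defined (locally integrable) by splittability of $X$ and the a.s.\ absolute continuity hypothesis. Second, $|m_t - \psi(0)| \le L\Ex|X_t|$, hence
\[
|Y_t| \;=\; |\psi(X_t) - m_t| \;\le\; L\bigl(|X_t| + \Ex|X_t|\bigr).
\]

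For Item (a), I use the displayed bound to estimate $\int_{[s,s+1)}|Y_t|\,\D t \le L\bigl(\int_{[s,s+1)}|X_t|\,\D t + \Ex\int_{[s,s+1)}|X_t|\,\D t\bigr)$, then divide by $C = 2L$ and use convexity of $\exp$ exactly as in the proof of Proposition \ref{2.1}:
\[
\exp\tfrac1{2L}\!\int_{[s,s+1)}\!\!|Y_t|\,\D t \;\le\; \tfrac12\exp\!\int_{[s,s+1)}\!\!|X_t|\,\D t + \tfrac12\exp \Ex\!\int_{[s,s+1)}\!\!|X_t|\,\D t.
\]
Taking expectations and applying Jensen's inequality to the second term bounds both summands by $\Ex\exp\int_{[s,s+1)}|X_t|\,\D t \le 2$. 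Item (b) is immediate from the definition of $Y$.

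For Item (c), fix $r$ and take a split $(\tilde X^0,\tilde X^-,\tilde X^+)$ of $\tilde X = (X_{t+r})_t$ witnessing splittability of $X$, so that the integrated leak has exponential moment at most $2$. Since each $\tilde X^i$ is distributed like $X$, each of the measures $\tilde X^i$ is a.s.\ absolutely continuous, so the random variables $\psi(\tilde X^i_t)$ make sense pointwise a.e. Define $\tilde Y^i_t = \psi(\tilde X^i_t) - m_{t+r}$ for $i \in \{0,-,+\}$; this is a split of $\tilde Y$ because the $\psi$ transformation and the (deterministic) shift by $m_{t+r}$ are both applied componentwise and preserve the joint-distribution structure (cf.\ \eqref{2.15}). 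The key point is that the constant $m_{t+r}$ cancels in the differences, so
\[
|\tilde Y^{\pm}_t - \tilde Y^0_t| \;=\; |\psi(\tilde X^{\pm}_t) - \psi(\tilde X^0_t)| \;\le\; L\,|\tilde X^{\pm}_t - \tilde X^0_t|.
\]
Hence the $Y$-leak is pointwise bounded by $L$ times the $X$-leak, and with $C = L$ we obtain $\Ex\exp\tfrac1L\bigl(\int_{(-\infty,0)}|\tilde Y^-_t - \tilde Y^0_t|\,\D t + \int_{[0,\infty)}|\tilde Y^+_t - \tilde Y^0_t|\,\D t\bigr) \le 2$. Taking $C = 2L$ handles (a) and (c) simultaneously, so $Y$ is \splittable{2L}.

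The proof is essentially routine once the Lipschitz bound is recognized as the correct generalization of the inequality $\bigl||x|-|y|\bigr| \le |x-y|$ exploited in Proposition \ref{2.1}; no single step should be difficult. The only point deserving care is the mild measure-theoretic one of checking that the pointwise formula $\tilde Y^i_t = \psi(\tilde X^i_t) - m_{t+r}$ indeed produces a split (i.e.\ that this componentwise construction is a transformation in the sense of \eqref{2.15}), which uses the a.s.\ absolute continuity hypothesis to pass from the measure $\tilde X^i$ to the density $\tilde X^i_t$ on which $\psi$ acts.
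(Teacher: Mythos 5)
Your proof is correct and takes essentially the same approach as the paper's: it mimics the proof of Proposition \ref{2.1}, with the Lipschitz bound $|\psi(a)-\psi(b)|\le L|a-b|$ replacing $\bigl||a|-|b|\bigr|\le|a-b|$ in both items (a) and (c), and the same use of \eqref{2.15} to see that $\(\psi(\ti X^\al_\cdot)-m_{\cdot+r}\)_\al$ is a split. The paper's version additionally normalizes $L=1$, $\psi(0)=0$ to lighten the bookkeeping, but this is a cosmetic difference only.
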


\begin{proof}
The proof of Prop.~\ref{2.1} needs only trivial modifications. WLOG, $L=1$ and
$ \psi(0)=0 $. Denote $ m_t = \Ex \psi(X_t) $, then $ Y_t = \psi(X_t) - m_t $.

(a) $ |\psi(X_t)| \le |X_t| $;
\begin{multline*}
\exp \frac12 \int_{[s,s+1)} |Y_t| \, \D t \le \exp \frac12 \Big( \int_{[s,s+1)}
 |\psi(X_t)| \, \D t + \int_{[s,s+1)} |m_t| \, \D t \Big) \le \\
\le \frac12 \exp \int_{[s,s+1)}
 |\psi(X_t)| \, \D t + \frac12 \exp \int_{[s,s+1)} |m_t| \, \D t \, ;
\end{multline*}
take the expectation and note that $ \exp \int_{[s,s+1)} |m_t| \, \D t
\le \exp \int_{[s,s+1)} \Ex |\psi(X_t)| \, \D t = \exp \Ex \int_{[s,s+1)}
|\psi(X_t)| \, \D t \le \Ex \exp \int_{[s,s+1)} |\psi(X_t)| \, \D t $.

(c) $ \ti Y^0_t = \psi(\ti X^0_t) - m_{t+r} $, and so on.
\end{proof}

On the other hand, sometimes it is useful to extend the domain of $F$ beyond
$ \VM $. In particular, the white noise\footnote{%
 See \cite[page 2]{I}.}
$ w = (w_t)_{t\in\R} $ cannot be
thought of as a random (locally finite signed) measure, but still, for every $
\phi \in L_2(\R) $ the random variable $ \int_{\R} \phi(t) w_t \, \D t $ is a
well-defined element of $ L_2(\Om) $, and the convolution $ G = \phi * w $ (that
is, $ G_t = \int_{\R} \phi(t-s) w_s \, \D s $) is a stationary Gaussian process
on $\R$, continuous in probability (and in the mean square). Under mild
conditions on $\phi$ the process $G$ has a sample continuous modification, that
is, may be thought of as a random continuous function. But generally it is not,
even if $\phi$ is a compactly supported continuous function.\footnote{%
 This is closely related to Pisier algebra, see \cite[Sect.~15.1, p.~213;
 Sect.~15.3, Theorem 3]{Kahane}, \cite[p.~682]{BPZ}.}
Generally, $G$ may be thought of as a random equivalence class\footnote{%
 Equivalence being equality almost everywhere on $\R$.}
of locally integrable functions on $\R$, thus, also a random locally finite
signed measure on $\R$.

The splittability definition does not apply to the white
noise $w$, since $w$ fails to be a random measure. But informally $w$ is
splittable, and moreover, \splittable{C} for all $C$, just because the past $
(w_t)_{t\in(-\infty,0)} $ and the future $ (w_t)_{t\in[0,\infty)} $ are
independent. We obtain a leakless split $(w^0,w^-,w^+)$ of $w$ as follows: $
w^-, w^+ $ are independent copies of $w$, and
\[
w_t^0 = w_t^{\sgn t} = \begin{cases} 
  w_t^- &\text{when } t<0,\\
  w_t^+ &\text{when } t\ge0;
\end{cases}
\]
more formally, for every $ \phi \in L_2(\R) $ the equality $ { \int_{\R} \phi(t) w^0_t \,
\D t } = \linebreak { \int_{\R} \phi(t) \One_{(-\infty,0)}(t) w^-_t \, \D t }
+ { \int_{\R} \phi(t) \One_{[0,\infty)}(t) w^+_t \, \D t } $ holds almost surely. Here is
a counterpart of Prop.~\ref{2.2}.

\begin{proposition}\label{2.4}
Let $\phi\in L_2(\R)$ satisfy $ \esssup_{t\in\R} (1+|t|)^\bal \phi^2(t) < \infty
$ for some $ \bal>3 $, and $ G = \phi * w $. Then $ G $ is a splittable random
process.
\end{proposition}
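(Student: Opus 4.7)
The plan is to verify Def.~\ref{2.spli}(a,b,c) for $\tfrac1C G$ with a sufficiently large $C>0$. For item~(a), since $G_t\sim N(0,\|\phi\|_{L^2}^2)$ is stationary Gaussian, I would dominate $\int_{[s,s+1)}|G_t|\,\D t\le\bigl(\int_{[s,s+1)}G_t^2\,\D t\bigr)^{1/2}$ by Cauchy--Schwarz, use $x/C\le x^2/(2C^2)+\tfrac12$, and Fubini, so as to reduce the bound to the one-dimensional Gaussian integral $\Ex\E^{\alpha G_0^2}=(1-2\alpha\|\phi\|_{L^2}^2)^{-1/2}$; the resulting bound is $\le 2$ for $C$ large. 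Item~(b) is immediate from $\Ex G_t=0$.

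Item~(c) is the core of the argument. By stationarity of $G$ it suffices to produce a single leak for $G$, and the natural candidate uses the leakless split $(w^0,w^-,w^+)$ of white noise recalled just before the proposition: set $G^0=\phi*w^0$ and $G^\pm=\phi*w^\pm$. Then $(G^0,G^-,G^+)$ is a split of $G$, and for $t\ge 0$,
\[
Y_t = G^+_t-G^0_t = \int_{-\infty}^0 \phi(t-s)\,(w^+_s-w^-_s)\,\D s\,,
\]
a centered Gaussian with variance $2\int_t^\infty\phi^2(u)\,\D u$; symmetrically for $t<0$. The hypothesis on $\phi$ yields $\Var Y_t\le\const_\bal(1+|t|)^{1-\bal}$.

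To bound $\Ex\exp\tfrac1C\int_\R|Y_t|\,\D t$, I would break the integral into unit slabs $\xi_n:=\int_{[n,n+1)}|Y_t|\,\D t$, $n\in\Z$. Viewed as a functional of the isonormal Gaussian process $(w^+,w^-)\in L^2(\R)\oplus L^2(\R)$, each $\xi_n$ is Cameron--Martin Lipschitz with constant $\Lambda_n\le\const_\bal(1+|n|)^{(1-\bal)/2}$ (via Cauchy--Schwarz on $\int_{-\infty}^0\phi(t-s)h_s\,\D s$), and $\Ex\xi_n\le\sqrt{2/\pi}\,\Lambda_n$. Borell's Gaussian concentration then gives $\Ex\exp(\lambda\xi_n)\le\exp\bigl(\sqrt{2/\pi}\,\Lambda_n\lambda+\tfrac12\Lambda_n^2\lambda^2\bigr)$, so taking $a_n:=K\Lambda_n$ for a large enough absolute constant $K$ makes $\Ex\exp(\xi_n/a_n)\le 2$. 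The hypothesis $\bal>3$ is precisely what makes $\sum_{n\in\Z}a_n$ finite (equivalent to $(\bal-1)/2>1$); denote this sum by $C$. The convexity inequality displayed in the proof of Prop.~\ref{2.2}, applied with $(a_n,\xi_n/a_n)$, then yields $\Ex\exp\tfrac1C\int_\R|Y_t|\,\D t\le\sup_n\Ex\exp(\xi_n/a_n)\le 2$, as required.

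The main obstacle is this last step. Pointwise variance estimates on $Y_t$ are easy, but upgrading $\Var Y_t\lesssim(1+|t|)^{1-\bal}$ to an exponential moment of $\int_\R|Y_t|\,\D t$ requires both a per-slab Gaussian concentration argument and summability of the slab Lipschitz constants $\Lambda_n$; the exponent $\bal>3$ enters at exactly this summability step, and anything weaker (for instance $\bal>1$, which would still give a finite first moment of $\int_\R|Y_t|\,\D t$) would fall short of the exponential-moment bound demanded by Def.~\ref{2.spli}(c).
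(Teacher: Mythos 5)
Your proof is correct and the split you use in item~(c), with $Y_t = G_t^{\sgn t}-G_t^0$ and the variance computation $\Var Y_t = 2\int_{|t|}^\infty\phi^2$, coincides with the paper's. Where you genuinely diverge is in how you upgrade the pointwise variance bound to an exponential moment of $\int_\R|Y_t|\,\D t$: you chop the line into unit slabs, apply Borell--TIS concentration to each slab integral $\xi_n$ (using a Cameron--Martin Lipschitz estimate), and then resum via the convexity inequality of Prop.~\ref{2.2}. The paper instead proves a single elementary lemma (Lemma~\ref{2.6}) stating that for any centered Gaussian family on a measure space $A$, $\Ex\exp\int_A|G_t|\,\D t\le\exp\bigl(\sqrt{2/\pi}\,C+\tfrac12 C^2\bigr)$ with $C=\int_A\sqrt{\Ex G_t^2}\,\D t$; its proof is just Jensen in the variable $t$, $\exp\int_A|G_t|\,\D t\le\int_A\bigl(\exp C|G_t|/\si_t\bigr)\,\si_t\,\D t/C$, followed by the pointwise Gaussian moment. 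This avoids concentration machinery and the slab decomposition altogether, applies uniformly to (a) and (c), and tracks the constants explicitly; your route reaches the same threshold $\bal>3$ by a longer path. One small factual slip in your closing remark: you say $\bal>1$ ``would still give a finite first moment of $\int_\R|Y_t|\,\D t$,'' but $\Ex|Y_t|\asymp(1+|t|)^{(1-\bal)/2}$ is integrable over $\R$ exactly when $\bal>3$, so even the first moment already requires the full hypothesis; this does not affect your proof, only the commentary on where the exponent enters.
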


\begin{lemma}\label{2.6}
The inequality
\[
\Ex \exp \int_A |G_t| \, \D t \le \E^{C^2/2} \frac2{\sqrt{2\pi}}
\int_{-\infty}^C \E^{-u^2/2} \, \D u \le \exp \bigg( \sqrt{\frac2\pi} C
+ \frac12 C^2 \bigg) \le \infty \, ,
\]
where $ C = \int_A \sqrt{ \Ex G_t^2 } \, \D t $, holds for every Lebesgue
measurable set $ A \subset \R $ and every centered\footnote{%
 That is, of mean zero.}
Gaussian process $G$ on $A$
whose covariation function $ (s,t) \mapsto \Ex G_s G_t $ is Lebesgue measurable
on $ A \times A $.
\end{lemma}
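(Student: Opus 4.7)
The plan is to recognize the middle quantity as the exponential moment of $C|\xi|$ for a standard normal $\xi$, apply Jensen's inequality to reduce from the Gaussian process to this scalar Gaussian, and then obtain the last bound from log-concavity of the Gaussian CDF. First I would verify that, for $\xi$ standard normal,
\[
\Ex \E^{C|\xi|} = 2 \int_0^\infty \frac{\E^{Cx - x^2/2}}{\sqrt{2\pi}} \, \D x = \E^{C^2/2} \cdot \frac2{\sqrt{2\pi}} \int_{-\infty}^C \E^{-u^2/2} \, \D u
\]
via the substitution $u = x - C$, so that the first inequality of the lemma amounts to $\Ex \exp \int_A |G_t| \, \D t \le \Ex \E^{C|\xi|}$.

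For this I would assume $0 < C < \infty$ (the cases $C = 0$ and $C = \infty$ being trivial) and set $\si(t) = \sqrt{\Ex G_t^2}$. On $A_0 = \{t \in A : \si(t) > 0\}$ the formula $\mu(\D t) = \si(t) C^{-1} \, \D t$ defines a probability measure; on $A \setminus A_0$ the process $G_t$ vanishes almost surely, so the integral over $A$ agrees with that over $A_0$ almost surely. Writing
\[
\int_{A_0} |G_t| \, \D t = C \int_{A_0} \frac{|G_t|}{\si(t)} \, \mu(\D t)
\]
and applying Jensen's inequality to $\exp$ against $\mu$, then taking expectation and using Fubini, I would obtain
\[
\Ex \exp \int_A |G_t| \, \D t \le \int_{A_0} \Ex \E^{C |G_t|/\si(t)} \, \mu(\D t) = \Ex \E^{C|\xi|},
\]
since $G_t/\si(t)$ is standard normal for each $t \in A_0$.

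The remaining inequality reduces to the claim $\log 2\Phi(C) \le \sqrt{2/\pi} \cdot C$ for all real $C$, where $\Phi$ is the standard normal CDF. This follows from the classical log-concavity of $\Phi$: the function $C \mapsto \log 2\Phi(C)$ is concave, vanishes at $C=0$, and has derivative $(2\pi)^{-1/2}/(1/2) = \sqrt{2/\pi}$ there, so it lies below its tangent line at the origin.

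The only genuine technical point is the joint measurability of $(\om,t) \mapsto G_t(\om)$ needed to make sense of $\int_A |G_t| \, \D t$ as a random variable and to invoke Fubini above; this is standard given the assumed Lebesgue measurability of the covariance function (via passage to a jointly measurable version), and finiteness of $\int_A \Ex |G_t| \, \D t = \sqrt{2/\pi}\, C$ already ensures $\int_A |G_t| \, \D t < \infty$ almost surely when $C < \infty$.
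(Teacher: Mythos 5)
Your proof is correct and follows essentially the same route as the paper: Jensen's inequality against the probability measure $\si(t)C^{-1}\,\D t$ reduces the process to the scalar exponential moment $\Ex\,\E^{C|\xi|}$, which is exactly the paper's step (phrased there as convexity of $\exp$ with weights $\si_t/C$), and the handling of the null set $\{\si(t)=0\}$ and the measurability remark are sensible extra care. The only divergence is in the closing bound $2\Phi(C)\le\E^{\sqrt{2/\pi}\,C}$: the paper splits the integral at $0$ and uses $1+x\le\E^x$, while you invoke log-concavity of the normal CDF and compare to the tangent line at the origin; both are valid, with the paper's version being slightly more self-contained and yours relying on a standard but less elementary fact.
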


\begin{remark}\label{2.7}
As usual, we assume that the Hilbert space $ L_2(\Om) $ is separable. This lemma
generalizes readily to $ A \subset \R^d $, and moreover, to arbitrary measure
space $A$. The ``$ \le\infty $'' clarifies that the inequality is void when $
C=\infty $.
\end{remark}

\begin{proof}[Proof of Lemma \ref{2.6}]
Denoting $ \si_t = \sqrt{ \Ex G_t^2 } $ we have $ \int_A \si_t \, \D t = C $ and
\begin{multline*}
\Ex \exp C \frac{|G_t|}{\si_t}
 = \frac1{\sqrt{2\pi}} \int_{-\infty}^\infty \E^{C|u|} \E^{-u^2/2} \, \D u = \\
\frac2{\sqrt{2\pi}} \int_0^\infty \exp \Big( -\frac12 (u-C)^2 + \frac12
 C^2 \Big) \, \D u
= \E^{C^2/2} \frac2{\sqrt{2\pi}} \int_{-\infty}^C \E^{-u^2/2} \, \D u
\end{multline*}
for all $ t \in A $; by convexity of $ \exp $,
\[
\exp \int_A |G_t| \, \D t = \exp \bigg(
C \int_A \frac{|G_t|}{\si_t} \frac{\si_t}C \, \D t \bigg) \le \int_A \Big( \exp
C \frac{|G_t|}{\si_t} \Big) \frac{\si_t}C \, \D t \, ;
\]
thus,
\[
\Ex \exp \int_A |G_t| \, \D t \le \int_A \Big( \Ex \exp
C \frac{|G_t|}{\si_t} \Big) \frac{\si_t}C \, \D t = \E^{C^2/2} \frac2{\sqrt{2\pi}}
\int_{-\infty}^C \E^{-u^2/2} \, \D u \, .
\]
And finally,
\[
\frac2{\sqrt{2\pi}} \int_{-\infty}^C \E^{-u^2/2} \, \D u
= \frac2{\sqrt{2\pi}} \bigg( \int_{-\infty}^0 \!\!\! \dots + \int_0^C \!\!\! \dots \bigg) \le
1 + \frac2{\sqrt{2\pi}} C \le \exp \bigg( \sqrt{\frac2\pi} C \bigg) \, .
\]
\end{proof}

\begin{proof}[Proof of Prop.~\ref{2.4}] \let\qed\relax
We inspect \ref{2.rem}(a,b,c).

(a) WLOG, $ \int_{\R} \phi^2(t) \, \D t = 1 $. We have $ \Ex G_t^2 = 1 $, thus
$ \int_{[0,1)} \sqrt{ \Ex G_t^2 } \, \D t = 1 $. Applying Lemma \ref{2.6} to
$ \frac1C G $ we get $ \Ex \exp \frac1C \int_{[0,1)} |G_t| \, \D
t \le \exp \( \frac1C \sqrt{\frac2\pi} + \frac1{2C^2} \) \le 2 $ for $C$ large
enough.

(b) Trivial.

(c) We use $ w^0, w^-, w^+ $ (recall the text before the proposition), introduce
$ G^0 = \phi * w^0 $, $ G^- = \phi * w^- $, $ G^+ = \phi * w^+ $, observe that $
(G^0, G^-, G^+) $ is a split of $ G $, consider its leak $ Y : t \mapsto
G_t^{\sgn t} - G_t^0 $, and prove that $ \Ex \exp \frac1C \int_\R |Y_t| \, \D
t \le 2 $ for $C$ large enough.

We have $ G^+ - G^0 = \phi * ( w^+ - w^0 ) $, that is, $ G^+_t - G^0_t
= \int_{\R} \phi(t-s) ( w^+_s - w^0_s ) \, \D s = \int_{(-\infty,0)} \phi(t-s) (
w^+_s - w^-_s ) \, \D s $. Further, $ \Ex ( G^+_t - G^0_t )^2 =
2\int_{(-\infty,0)} \phi^2(t-s) \, \D s = 2 \int_{(t,\infty)} \phi^2(s) \, \D s
$, whence $ \int_{[0,\infty)} \sqrt{ \Ex ( G^+_t - G^0_t )^2 } \, \D t = \sqrt2
M $ where $ M = \int_0^\infty \sqrt{ \int_t^\infty \phi^2(s) \, \D s } \, \D t
$. Applying  Lemma \ref{2.6} to $ \frac2C ( G^+ - G^0 ) $ we get
\[
\Ex \exp \Big( \frac2C \int_{[0,\infty)} | G^+_t - G^0_t | \, \D
t \Big) \le \exp \Big( \sqrt{\frac2\pi} \cdot \frac2C \cdot \sqrt2 M
+ \frac12 \cdot \frac4{C^2} \cdot 2 M^2 \Big) \le 2
\]
for $C$ large enough, provided that $ M < \infty $. It remains to note that,
denoting $ \esssup_{t\in\R} (1+|t|)^\bal \phi^2(t) $ by $ C_1 $, we have
\begin{gather*}
\int_t^\infty \phi^2(s) \, \D s \le \int_t^\infty C_1 (1+|s|)^{-\bal} \, \D s =
 \frac{C_1}{(\bal-1)(1+t)^{\bal-1}} \, ; \\
\int_0^\infty \sqrt{ \frac{C_1}{(\bal-1)(1+t)^{\bal-1}} } \, \D t
= \sqrt{ \frac{C_1}{\bal-1} } \frac2{\bal-3} < \infty \, . \qquad\quad \rlap{$\qedsymbol$}
\end{gather*}
\end{proof}

\section[Dimension one: more]
  {\raggedright Dimension one: more}
\label{sect3}
\begin{sloppypar}
\begin{proposition}\label{3.1}
Assume that $ \phi \in L_2(\R) $ satisfies $ \int_\R \phi^2(t) \, \D t = 1 $, $
\esssup_{t\in\R} (1+|t|)^\bal \phi^2(t) < \infty $ for some $ \bal>3 $, and $
\psi : \R \to \R $ satisfies the Lipschitz condition, and $ \int_\R \psi(u)
\E^{-u^2/2} \, \D u = 0 $. Then the following random process is splittable: $ X
= \psi(\phi*w) $, that is, $ X_t = \psi\((\phi*w)_t\) $.
\end{proposition}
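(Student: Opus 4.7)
The statement is essentially a composition of the two main results of Section~\ref{sect2}: Proposition~\ref{2.4} produces a splittable Gaussian process, and Proposition~\ref{2.3} transports splittability through a Lipschitz pointwise transformation. My plan is to chain the two, checking that the hypotheses line up and that the centering condition in Proposition~\ref{2.3} is automatic here.

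First I would apply Proposition~\ref{2.4} to the given $\phi$: the assumptions $\phi\in L_2(\R)$ and $\esssup_{t\in\R}(1+|t|)^\bal\phi^2(t)<\infty$ with $\bal>3$ are precisely what is required, so $G:=\phi*w$ is a splittable random process. To feed $G$ into Proposition~\ref{2.3} I also need the random measure $B\mapsto\int_B G_t\,\D t$ to be absolutely continuous almost surely. This is exactly the point made in the paragraph preceding Proposition~\ref{2.4}: because $\phi\in L_2(\R)$, the convolution $\phi*w$ is realized as a random equivalence class of locally integrable functions, hence as a random absolutely continuous signed measure.

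Next I would compute $\Ex\psi(G_t)$. Since $\Ex G_t^2=\int_\R\phi^2=1$, each $G_t$ is standard normal, so
\[
\Ex\psi(G_t) \;=\; \frac{1}{\sqrt{2\pi}}\int_\R \psi(u)\,\E^{-u^2/2}\,\D u \;=\; 0
\]
by the centering hypothesis on $\psi$. Therefore the process $t\mapsto\psi(G_t)-\Ex\psi(G_t)$ produced by Proposition~\ref{2.3} coincides with $X=\psi(G)$. Applying Proposition~\ref{2.3} with the splittable process $G$ and the Lipschitz function $\psi$ then yields splittability of $X=\psi(\phi*w)$, as desired.

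No step introduces new difficulty: the argument is a direct quotation of Propositions~\ref{2.3} and~\ref{2.4}. The only piece that deserves attention is the verification of a.s.\ absolute continuity of $G$'s sample measure, which is where the $L_2$ nature of $\phi*w$ (rather than a merely distributional convolution) genuinely enters; this is the closest thing to an ``obstacle'' in the proof, but it was already settled in the discussion preceding Proposition~\ref{2.4}.
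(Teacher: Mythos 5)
Your argument is exactly the paper's: the paper's proof of Proposition~\ref{3.1} consists of the single sentence ``Just combine Propositions~\ref{2.3} and~\ref{2.4}.'' Your additional remarks on the a.s.\ absolute continuity of the sample measure of $\phi*w$ and on the vanishing of $\Ex\psi(G_t)$ are correct and merely make explicit what the paper leaves implicit.
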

\end{sloppypar}

\begin{proof}
Just combine Propositions \ref{2.3} and \ref{2.4}.
\end{proof}

In this section we generalize Prop.~\ref{3.1} to complex-valued $\phi$ and not
necessarily Lipschitz functions $ \psi : \C \to \R $, including the function $
\psi : z \mapsto \log |z| $.

The complex-valued white noise is just $ \wC = \frac1{\sqrt2} (w_1+\I w_2) $,
where $ w_1,w_2 $ are two independent copies of the real-valued white noise (and
$ \I \in \C $ is the imaginary unit). Similarly to the real-valued case, given a
complex-valued $ \phi \in L_2(\R\to\C) $, the convolution $ \phi * \wC $ is a
complex-valued stationary Gaussian random process on $\R$;
\begin{multline*}
( \phi * \wC )_t = \int \phi(t-s) \wC(s) \, \D s = \int \( \Re \phi(t-s) w_1(s)
 - \Im \phi(t-s) w_2(s) \) \, \D s + \\
\I \int \( \Re \phi(t-s) w_2(s) + \Im \phi(t-s) w_1(s) \) \, \D s \, ;
\end{multline*}
it is a well-defined random equivalence class of locally integrable functions $
\R \to \C $.\,\footnote{%
 Recall page 10.}
Its real part $ \Re (\phi * \wC) $ and imaginary part $ \Im (\phi * \wC) $ are
identically distributed Gaussian processes.\footnote{
 Since $ (w_1,-w_2) $ and $ (w_2,w_1) $ are identically distributed.}
Their cross-covariation function $ (t_1,t_2) \mapsto \Ex (\Re X_{t_1}) (\Im
X_{t_2}) = \int ( \Re\phi(t_1-s) \Im\phi(t_2-s) - \Im\phi(t_1-s) \Re\phi(t_2-s)
\) \, \D s $ vanishes on the diagonal ($t_1=t_2$), but need not vanish outside
the diagonal.

The notions of split, leak, splittability, Definition \ref{2.spli} and Remark
\ref{2.rem} generalize readily to complex-valued random processes.

The function $ \psi : z \mapsto \log |z| $ needs a new approach, different from
the approach of Sect.~\ref{sect2} (used in the proof of Prop.~\ref{3.1}). Here
is why.

In the approach of Sect.~\ref{sect2}, when estimating (an exponential moment of)
a non-Gaussian leak integral $ \int_0^\infty | \psi(\phi*w^+) - \psi(\phi*w^0) |
\, \D t $, decay of $ \phi $ (such as $ \esssup_{t\in\R} (1+|t|)^\bal
\phi^2(t) < \infty $ for some $ \bal>3 $) is used for estimating the underlying
Gaussian leak integral $ \int_0^\infty | \phi*w^+ - \phi*w^0) | \, \D t $;
afterwards, Lipschitz continuity of $\psi$ matters, but decay of $ \phi $
does not.

Another approach is needed for $ \psi : z \mapsto \log |z| $, as shown by the
following counterexample. We choose a pair of continuous functions $ f,g :
[0,\infty) \to \C $ such that $ f(t) \to 1 $ and $ g(t) \to 0 $ as $ t
\to \infty $, take three independent complex-valued random variables $ \xi_0,
\xi_1, \xi_2 $ each having the standard normal distribution on $\C $ (that is,
like $ \frac1{\sqrt2} (\zeta_1+\I\zeta_2) $ where $ \zeta_1, \zeta_2 $ are
independent real-valued normal $N(0,1)$ random variables), and introduce a pair
of (correlated) identically distributed complex-valued Gaussian processes $ X_1,
X_2 $ on $[0,\infty)$ by $ X_1 = \xi_0 f + \xi_1 g $, $ X_2 = \xi_0 f + \xi_2 g
$. A calculation, sketched below, shows that
\[
\Ex \exp \frac1C \int_0^\infty \big| \log |X_1(t)| - \log |X_2(t)| \big| \, \D t
= \infty \qquad \text{for all } C \in (0,\infty)
\]
even if the function $ t \mapsto \Ex | X_1(t) - X_2(t) |^2 = 2|g(t)|^2 $ decays very
fast, namely, $ |g(t)| \sim \exp(-t^2) $ for large $t$, and moreover, when $ |g(t)|
\sim \exp(-t^3) $, or $ |g(t)| \sim \exp(-t^4) $, and so on.

Here is the calculation. For a small $ \eps $, the event
``$|\xi_0|\le\eps$ and $|\xi_1|\le\eps$ and $|\xi_2|\ge1$'' is of probability $
\sim \eps^4 $ (up to a coefficient); denote this event by $A$. Given $A$, we
have
\[
\frac{|X_1(t)|}{|X_2(t)|} = \frac{ |\xi_0 f(t) + \xi_1 g(t)| }{ |\xi_0 f(t) +
\xi_2 g(t)| } \le \frac{ \eps |f(t)| + \eps |g(t)| }{ |g(t)|-\eps|f(t)| } \, .
\]
Assuming in addition that $ |g(t)| \ge 2\eps |f(t)| $ we have
\[
\frac{|X_1(t)|}{|X_2(t)|} \le 2\eps \frac{ |f(t)| + |g(t)| }{ |g(t)| } \, ,
\]
whence
\[
\big| \log |X_1(t)| - \log |X_2(t)| \big| \ge \log \frac{|X_2(t)|}{|X_1(t)|} \ge
\log \frac{ |g(t)| }{ 2\eps (|f(t)| + |g(t)|) } \, .
\]
Now, let $ g(t) = \exp(-t^n) $. We take $ [a,b] \subset (0,\infty) $ such that $
t \in [a,b] $ if and only if $ \eps^{3/4} \le |g(t)| \le \eps^{1/4} $; that is,
$ a = \sqrt[n]{\frac14 \log \frac1\eps} $ and $ b = \sqrt[n]{\frac34 \log
\frac1\eps} $. If $\eps$ is small enough, we have $ |f(t)| \le 2 $ and $ |g(t)|
\ge \eps^{3/4} \ge 4\eps \ge 2\eps |f(t)| $ for all $ t \in [a,b] $, thus,
\begin{gather*}
\big| \log |X_1(t)| - \log |X_2(t)| \big| \ge \log \frac{ \eps^{3/4} }{ 2\eps
 (2+\eps^{1/4}) } \ge \frac14 \log \frac1\eps - \const \, ; \\
\! \int_a^b \! \big| \log |X_1(t)| - \log |X_2(t)| \big| \, \D t 
 \ge (b-a) \Big( \frac14 \log \frac1\eps - \const \!\! \Big) \ge \const \cdot
 \Big( \! \log \frac1\eps \Big)^{1+\frac1n} \, ; \\
\Ex \exp \frac1C \int_0^\infty \big| \dots - \dots \big| \, \D t
\ge \underbrace{\Pr{A}}_{\ge\const\cdot\eps^4} \!\! \cdot \exp \bigg( \frac1C \cdot
\const \Big( \log \frac1\eps \Big)^{1+\frac1n} \bigg) \to \infty
\end{gather*}
as $ \eps \to 0+ $.

The two cases, real-valued and complex-valued, may be treated as special cases
of \valued{\R^p} Gaussian processes. Thus, $ \psi : \R^p \to \R $ (for a given $
p \in \{1,2,3,\dots\} $), and first of all we need $ \Ex \exp \frac1C \psi(X) <
\infty $ for a given Gaussian random vector $X$, when $C$ is large enough.

Further, we need an appropriate condition on $\psi$, weaker than Lipschitz
continuity.

Here is a general fact, to be used in the proof of Lemma \ref{3.15} (and
\ref{3.16}).

\begin{lemma}\label{3.2}
If functions $ f,g : (0,\infty) \to (0,\infty) $ are decreasing, then
\[
\int_{\R^p} f(|x+a|) g(|x|) \, \D x \le \int_{\R^p} f(|x|) g(|x|) \, \D x \le
\infty
\]
for all $ a \in \R^p $.
\end{lemma}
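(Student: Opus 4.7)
The plan is to reduce the statement to an elementary geometric fact about intersections of balls, using the layer-cake representation of $f$ and $g$. Since $f,g : (0,\infty) \to (0,\infty)$ are decreasing, for every $s > 0$ the super-level set $\{ r : f(r) > s \}$ is either empty, all of $(0,\infty)$, or an interval $(0, r_f(s))$ with $r_f(s) \in (0,\infty]$ (and similarly for $g$ with a threshold $r_g(t)$). Consequently, for almost every $x \in \R^p$,
\[
f(|x+a|) = \int_0^\infty \One_{B(-a, r_f(s))}(x) \, \D s, \qquad
g(|x|) = \int_0^\infty \One_{B(0, r_g(t))}(x) \, \D t,
\]
where $B(c,R)$ denotes the open ball of radius $R$ centered at $c$ (with $B(c,\infty) = \R^p$ and $B(c,0) = \emptyset$), and likewise $f(|x|) = \int_0^\infty \One_{B(0, r_f(s))}(x) \, \D s$.

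Next I would multiply the two layer-cake expressions and apply Tonelli's theorem (the integrands are nonnegative, so no integrability question arises; the value $+\infty$ is allowed, matching the ``$\le \infty$'' clause of the statement). This gives
\[
\int_{\R^p} f(|x+a|) g(|x|) \, \D x = \int_0^\infty\!\! \int_0^\infty \vol\bigl( B(-a, r_f(s)) \cap B(0, r_g(t)) \bigr) \, \D s \, \D t,
\]
and the analogous identity with $a=0$ for the right-hand side of the desired inequality. It therefore suffices to prove the pointwise (in $s,t$) bound
\[
\vol\bigl( B(-a, R_1) \cap B(0, R_2) \bigr) \le \vol\bigl( B(0, R_1) \cap B(0, R_2) \bigr)
\]
for all $R_1, R_2 \in [0,\infty]$ and $a \in \R^p$.

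This bound is immediate: assume without loss of generality $R_1 \le R_2$, so the right-hand side equals $\vol B(0,R_1)$, while the left-hand side is at most $\vol B(-a, R_1) = \vol B(0, R_1)$ by translation invariance of Lebesgue measure. Integrating over $s,t$ then yields the lemma. I do not foresee any real obstacle; the only delicate points are ensuring measurability of $r_f, r_g$ (which follows from monotonicity) and keeping track of the degenerate cases where some ball is empty or all of $\R^p$, where the inequality is trivial.
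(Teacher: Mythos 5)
Your proof is correct and is essentially the paper's own argument in different notation: the paper decomposes $f(|x+a|)$ as the Lebesgue--Stieltjes integral $\int_{[|x+a|,\infty)}(-\D f(u))$ and applies Tonelli to reduce to $\int_{|x+a|\le u,\,|x|\le v}\D x \le \int_{|x|\le u,\,|x|\le v}\D x$, which is exactly your ball-intersection inequality $\vol\bigl(B(-a,R_1)\cap B(0,R_2)\bigr)\le\vol\bigl(B(0,R_1)\cap B(0,R_2)\bigr)$. Your layer-cake decomposition over the value axis is just the dual picture of the paper's Stieltjes decomposition over the radius axis, and it also spares you the paper's preliminary reduction ``WLOG $f(r)\to0$, $g(r)\to0$'' since $r_f(s)=\infty$ is handled directly.
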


\begin{proof}\let\qed\relax
WLOG, $ f(r) \to 0 $ and $ g(r) \to 0 $ as $ r \to \infty $. We have
\begin{multline*}
\int_{\R^p} f(|x+a|) g(|x|) \, \D x = \int_{\R^p} \Big( \int_{[|x+a|,\infty)}
 \(-\D f(u)\) \Big) \Big( \int_{[|x|,\infty)} \(-\D g(u)\) \Big) \, \D x = \\
\iint \Big( \int_{|x+a|\le u, |x|\le v} \D x \Big) \(-\D f(u)\) \(-\D g(v)\) \le
 \\
\iint \Big( \int_{|x|\le u, |x|\le v} \D x \Big) \(-\D f(u)\) \(-\D g(v)\) =
 \int_{\R^p} f(|x|) g(|x|) \, \D x \, . \qquad \rlap{$\qedsymbol$}
\end{multline*}
\end{proof}

\begin{definition}\label{noisy}
Let $ \mu $ be a probability measure on $ \R^p $, and $
(X_1,\dots,X_n) $ a sequence of \valued{\R^p} random
variables. We say that $ (X_1,\dots,X_n) $ is \emph{\noisy{\mu},} if there exist
(on some probability space) two independent sequences $ (Y_1,\dots,Y_n) $, $
(Z_1,\dots,Z_n) $ of \valued{\R^p} random variables such that
$ (Y_1+Z_1,\dots,Y_n+Z_n) $ is distributed like $ (X_1,\dots,X_n) $, and $
Z_1,\dots,Z_n $ are independent, each distributed $\mu$. In other words: the
distribution of $ (X_1,\dots,X_n) $ is the convolution of some probability
measure and the product measure $ \mu \times \dots \times \mu $.
\end{definition}

\begin{lemma}\label{3.12}
If $ (X_1,\dots,X_n) $ is \noisy{\mu}, then
\[
\Ex \prod_{k=1}^n f_k(X_k) \le \prod_{k=1}^n \sup_{y\in\R^p} \int_{\R^p}
f_k(y+z) \, \mu(\D z) \le \infty
\]
for all Borel measurable $ f_1,\dots,f_n : \R^p \to [0,\infty) $.\,\footnote{%
 See also \cite[Lemma 3.4]{2008}.}\fnsep\footnote{%
 If $\mu$ is absolutely continuous, then Lebesgue measurability is enough. And,
 as usual, whenever we deal with a Lebesgue measurable function, only its
 equivalence class matters.}
\end{lemma}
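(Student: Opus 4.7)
The plan is to unfold Definition~\ref{noisy} and then condition on the non-noise components. By the definition I may realize $(X_1,\dots,X_n)$ as $(Y_1+Z_1,\dots,Y_n+Z_n)$ on some probability space, where $(Y_1,\dots,Y_n)$ is independent of $(Z_1,\dots,Z_n)$, and $Z_1,\dots,Z_n$ are themselves independent, each with distribution $\mu$. This structural decomposition is the only input from the hypothesis.

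The key step is a conditional-expectation factorization. Set $\mathcal{G}=\sigma(Y_1,\dots,Y_n)$. Since each $Z_k$ is independent of $\mathcal{G}$ and the $Z_k$ are mutually independent, the conditional expectation factorizes:
\[
\Ex\Big(\prod_{k=1}^n f_k(Y_k+Z_k)\,\Big|\,\mathcal{G}\Big)=\prod_{k=1}^n\int_{\R^p}f_k(Y_k+z)\,\mu(\D z)
\]
almost surely. Pointwise in $\omega$, each factor is at most its supremum over $y\in\R^p$, hence
\[
\prod_{k=1}^n\int_{\R^p}f_k(Y_k+z)\,\mu(\D z)\le \prod_{k=1}^n\sup_{y\in\R^p}\int_{\R^p}f_k(y+z)\,\mu(\D z),
\]
and the right-hand side is deterministic. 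Taking outer expectation yields the claimed inequality.

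I do not expect a real obstacle here; the mild technicality is measurability of $y\mapsto\int f_k(y+z)\,\mu(\D z)$, which is immediate from Fubini. The statement is in essence a tautological consequence of $\mu$-noisiness, with the supremum serving only to discard the residual dependence on the $Y_k$. The ``$\le\infty$'' clause is automatic since all quantities are non-negative.
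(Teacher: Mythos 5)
Your proof is correct and follows essentially the same route as the paper's: realize $(X_k)$ as $(Y_k+Z_k)$, condition on $\sigma(Y_1,\dots,Y_n)$, use independence of the $Z_k$ to factor the conditional expectation into $\prod_k\int f_k(Y_k+z)\,\mu(\D z)$, and bound each factor by its supremum. There is nothing to add.
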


\begin{proof}\let\qed\relax
Using $Y_k,Z_k$ from Def.~\ref{noisy} we get
\[
\Ex \prod_{k=1}^n f_k(X_k) = \Ex \prod_{k=1}^n f_k(Y_k+Z_k) = \Ex \(
\cE{\textstyle \prod_{k=1}^n f_k(Y_k+Z_k) }{\textstyle Y_1,\dots,Y_n } \); \quad \text{and}
\]
\begin{multline*}
\cE{\textstyle \prod_{k=1}^n f_k(Y_k+Z_k) }{\textstyle Y_1,\dots,Y_n } = \prod_{k=1}^n \cE{
 f_k(Y_k+Z_k) }{ Y_1,\dots,Y_n } = \\
\prod_{k=1}^n \int_{\R^p} f_k(Y_k+z) \, \mu(\D z) \le \prod_{k=1}^n
 \sup_{y\in\R^p} \int_{\R^p} f_k(y+z) \, \mu(\D z) \, . \qquad \rlap{$\qedsymbol$}
\end{multline*}
\end{proof}

Here is a chain of rather trivial implications of Lemma \ref{3.12}. Introducing
shifted measures $ \mu_y $ by $ \int f(z) \, \mu_y(\D z) = \int f(y+z) \mu(\D z)
$ we get
\[
\Ex \prod_k f_k(X_k) \le \prod_k \sup_y \int f_k(z) \, \mu_y(\D z) \, .
\]
Replacing $ f_k(\cdot) $ with $ \exp f_k(\cdot) $ we get
\[
\Ex \exp \sum_k f_k(X_k) \le \exp \sum_k \sup_y \log \int \exp f_k(z) \,
\mu_y(\D z) \, .
\]
Replacing $ f_k(\cdot) $ with $ f(\cdot,x'_k) $ (with arbitrary parameters $
x'_1, \dots, x'_n \in \R^p $) we get
\[
\Ex \exp \sum_k f(X_k,x'_k) \le \exp \sum_k \sup_y \log \int \exp f(z,x'_k) \,
\mu_y(\D z) \, .
\]
Replacing this $ \sup \log \int \dots $ by $ g(x'_k) $ we get
\begin{gather*}
\text{if} \quad \forall x',y \;\> \log \int \exp f(z,x') \, \mu_y(\D z) \le
 g(x') \, , \quad \text{then} \\
\Ex \exp \sum_k f(X_k,x'_k) \le \exp \sum_k g(x'_k) \, .
\end{gather*}
Assuming Borel measurability of the function $ g $ and replacing the sequence of
parameters $ x' = (x'_1,\dots,x'_n) $ with a sequence $ X' = (X'_1,\dots,X'_n) $
of \valued{\R^p} random variables $ X'_k $ such that $ X' $ is independent of
the sequence $ X = (X_1,\dots,X_n) $ (while $X'_1,\dots,X'_n$ may be
interdependent) we get
\begin{gather*}
\text{if} \quad \forall x',y \;\> \log \int \exp f(z,x') \, \mu_y(\D z) \le
 g(x') \, , \quad \text{then} \\
\CE{ \exp \sum_k f(X_k,X'_k) }{ X' } \le \exp \sum_k g(X'_k) \, .
\end{gather*}
Taking unconditional expectation we get
\begin{gather*}
\text{if} \quad \forall x',y \;\> \log \int \exp f(z,x') \, \mu_y(\D z) \le
 g(x') \, , \quad \text{then} \\
\Ex \exp \sum_k f(X_k,X'_k) \le \Ex \exp \sum_k g(X'_k) \, .
\end{gather*}
Taking $ f $ of the form $ f(x,x') = | \psi(x+x') - \psi(x-x') | $ we get
\begin{gather*}
\text{if} \quad \forall x',y \;\> \log \int \exp | \psi(z+x') - \psi(z-x') | \,
\mu_y(\D z) \le g(x') \, , \quad \text{then} \\
\Ex \exp \sum_k | \psi(X_k+X'_k) - \psi(X_k-X'_k) | \le \Ex \exp \sum_k g(X'_k)
\, .
\end{gather*}
We summarize.

\begin{lemma}\label{3.5}
Let Borel measurable functions $ \psi : \R^p \to \R $, $ g : \R^p \to \R $ and
a probability measure $ \mu $ on $ \R^p $ satisfy\footnote{%
 As before, $ \mu_h $ is $ \mu $ shifted by $ h $.}
\begin{equation}\label{3.6}
\log \int_{\R^p} \exp | \psi(x+y) - \psi(x-y) | \, \mu_h(\D x) \le g(y)
\end{equation}
for all $ y,h \in \R^p $. Then
\[
\Ex \exp \sum_{k=1}^n | \psi(X_k+Y_k) - \psi(X_k-Y_k) | \le \Ex \exp
\sum_{k=1}^n g(Y_k)
\]
for every pair $X,Y$ of independent sequences $ X = (X_1,\dots,X_n) $ and $ Y =
(Y_1,\dots,Y_n) $ of \valued{\R^p} random variables such that $ X $ is
\noisy{\mu}.\footnote{%
 Clarification: the whole $X$ is independent of the whole $Y$, while among
 $ X_1,\dots,X_n $ dependency is allowed, as well as among $ Y_1,\dots,Y_n $.}%
\fnsep\footnote{%
 If $\mu$ is absolutely continuous, then Lebesgue measurability is enough for
 $\psi$. If also each $Y_k$ has an absolutely continuous distribution, then
 the same holds for $g$, too.}
\end{lemma}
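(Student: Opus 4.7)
The plan is to formalize the ``chain of rather trivial implications'' of Lemma \ref{3.12} displayed immediately before the lemma, via a conditioning on $Y$ and a pointwise application of Lemma \ref{3.12}.

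By independence of $X$ and $Y$, a regular conditional distribution of $X$ given $Y$ coincides with the unconditional distribution of $X$, and in particular remains \noisy{\mu}. So one may condition on $Y = (x'_1,\dots,x'_n)$ and apply Lemma \ref{3.12} to the Borel functions $f_k(x) := \exp | \psi(x+x'_k) - \psi(x-x'_k) |$, obtaining
$$\CE{ \prod_{k=1}^n f_k(X_k) }{ Y = (x'_1,\dots,x'_n) } \le \prod_{k=1}^n \sup_{y \in \R^p} \int_{\R^p} \exp | \psi(y+z+x'_k) - \psi(y+z-x'_k) | \, \mu(\D z).$$
The change of variables $x = y+z$ rewrites each integral as $\int_{\R^p} \exp | \psi(x+x'_k) - \psi(x-x'_k) | \, \mu_y(\D x)$, so the hypothesis \eqref{3.6} (used with parameter $x'_k$ in $\psi$ and shift $y$ of $\mu$) bounds each factor by $\exp g(x'_k)$.

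Putting these pieces together and reinstating the random parameters $Y_k$ in place of $x'_k$ gives
$$\CE{ \exp \sum_{k=1}^n | \psi(X_k+Y_k) - \psi(X_k-Y_k) | }{ Y } \le \exp \sum_{k=1}^n g(Y_k) \quad \text{almost surely},$$
and taking unconditional expectation yields the claimed inequality. There is no serious obstacle; the only care needed is the standard measurability bookkeeping to pass from pointwise bounds in the deterministic parameters $x'_k$ to their random counterparts $Y_k$ inside a conditional expectation, which is immediate from the joint Borel measurability of the integrand in $(x,x'_k)$ guaranteed by Borel measurability of $\psi$ and $g$.
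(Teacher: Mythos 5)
Your proposal is correct and follows essentially the same route as the paper: the paper derives the lemma from Lemma \ref{3.12} through exactly this chain (apply Lemma \ref{3.12} with deterministic parameters $x'_k$, rewrite the shifted integrals as $\mu_y$, bound each factor by $\exp g(x'_k)$ via \eqref{3.6}, then replace the parameters by the independent random sequence $Y$ through a conditional expectation and integrate). Your conditioning on $Y$ and the paper's passage from deterministic $x'_k$ to random $X'_k$ are the same step formalized in slightly different words.
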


Condition \eqref{3.6} may be thought of as a generalization of H\"older
continuity. Indeed, if $\mu$ is a single atom at $0$, then $ \log \int \exp |
\psi(x+y) - \psi(x-y) | \, \mu_h(\D x) = | \psi(h+y) - \psi(h-y) | $; if also $
g(y) = |2y|^\de $, then \eqref{3.6} boils down to $ \forall a,b \;\>
|\psi(a)-\psi(b)| \le |a-b|^\de $.

\begin{lemma}\label{3.15}
Let the function $ \psi : \R^p \to \R $ be given by $ \psi(x) = \log |x|
$, and $ \mu $ be a nontrivial rotation invariant Gaussian measure on $ \R^p $,
that is, $ \mu(\D x) = (2\pi\si^2)^{-p/2} \exp \( - |x|^2 / (2\si^2) \) \D x $ for
some $ \si \in (0,\infty) $. Then condition \eqref{3.6} is satisfied by (this
$\psi$ and) $ g : \R^p \to \R $ given by $ g(y) = C|y| $, provided that the
constant $C$ (dependent only on $p$ and $\si$) is large enough, and $ p \ge 2 $.
\end{lemma}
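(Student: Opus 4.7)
The plan is to bound $\exp|\log|x+y|-\log|x-y||$ pointwise by $1$ plus a term that is linear in $|y|$ carrying a singularity of the form $1/|x\pm y|$, and then use Lemma \ref{3.2} to reduce the resulting shifted Gaussian integrals to a single unshifted one whose convergence is governed precisely by the hypothesis $p\ge2$.

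First I would use the elementary identity $\exp|\log u-\log v|=\max(u/v,v/u)=1+|u-v|/\min(u,v)$ for $u,v>0$. With $u=|x+y|$ and $v=|x-y|$, the reverse triangle inequality gives $\bigl||x+y|-|x-y|\bigr|\le 2|y|$, while $1/\min(u,v)=\max(1/u,1/v)\le 1/u+1/v$, producing
\[
\exp\bigl|\log|x+y|-\log|x-y|\bigr|\le 1+2|y|\Big(\tfrac{1}{|x+y|}+\tfrac{1}{|x-y|}\Big).
\]
A change of variables $z=x\pm y$ then rewrites each term $\int|x\pm y|^{-1}\,\mu_h(\D x)$ as an integral of the form $\int|z|^{-1}\rho(z+a)\,\D z$, where $\rho$ is the density of $\mu$ and $a\in\R^p$ depends on $h$ and on the sign. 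Since $\rho$ is radially decreasing and $|z|^{-1}$ is decreasing in $|z|$, Lemma \ref{3.2} removes the shift: $\int|z|^{-1}\rho(z+a)\,\D z\le\int|z|^{-1}\rho(z)\,\D z=:K(p,\si)$ uniformly in $a$.

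The crucial step, and the only place where the hypothesis $p\ge2$ enters, is the finiteness of this last integral. In polar coordinates it equals a constant times $\int_0^\infty e^{-r^2/(2\si^2)}r^{p-2}\,\D r$: the Gaussian factor controls the tail, and integrability at the origin holds exactly when $p-2>-1$. Combining the pointwise bound with the two uniform estimates gives
\[
\int_{\R^p}\exp\bigl|\log|x+y|-\log|x-y|\bigr|\,\mu_h(\D x)\le 1+4K|y|,
\]
and $\log(1+t)\le t$ then delivers \eqref{3.6} with $g(y)=C|y|$ and $C=4K(p,\si)$. The main obstacle is conceptual rather than technical: one needs, after the Lemma \ref{3.2} reduction, integrability of $1/|z|$ at the origin against the Gaussian density; this is precisely the role of the dimensional restriction $p\ge2$, and the estimate (as well as the statement) fails for $p=1$.
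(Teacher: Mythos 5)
Your proof is correct and takes essentially the same route as the paper: the same pointwise bound $\exp|\log|x+y|-\log|x-y||\le 1+2|y|(|x+y|^{-1}+|x-y|^{-1})$ (you via the exact identity $\max(u/v,v/u)=1+|u-v|/\min(u,v)$, the paper via a short chain of inequalities), then Lemma \ref{3.2} to remove the shift, then the polar-coordinate integral $\int_0^\infty e^{-r^2/(2\si^2)}r^{p-2}\,\D r$ whose convergence is exactly where $p\ge2$ enters, and finally $\log(1+t)\le t$. The only cosmetic difference is that the paper first normalizes $\si=1$ by homothety invariance of $\log|x+y|-\log|x-y|$, whereas you carry $\si$ through explicitly.
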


\begin{sloppypar}
\begin{proof}
WLOG, $ \si=1 $, since $ \log |x+y| - \log |x-y| $ is insensitive to the
homothety $ x \mapsto \si x $.
We have $ \int_{\R^p} \exp \big| \log |x+y| - \log |x-y| \big| \, \mu_h(\D x) =
\int_{\R^p} \exp \max \( \log \frac{|x+y|}{|x-y|}, \log \frac{|x-y|}{|x+y|} \)
\, \mu_h(\D x) = \int_{\R^p} \max \( \frac{|x+y|}{|x-y|}, \frac{|x-y|}{|x+y|} \)
\, \mu_h(\D x) \le \int_{\R^p} \max \( 1+\frac{2|y|}{|x-y|},
1+\frac{2|y|}{|x+y|} \) \, \mu_h(\D x) \le 1 + 2|y| \int_{\R^p} \( \frac1{|x-y|}
+ \frac1{|x+y|} \) \, \mu_h(\D x) = 1 + 2|y| \int_{\R^p} \( \frac1{|x+h-y|}
+ \frac1{|x+h+y|} \) \, \mu(\D x) $. By Lemma \ref{3.2}, $ \int_{\R^p}
\frac{\mu(\D x)}{|x+h-y|} \le \linebreak[1] \int_{\R^p} \frac{\mu(\D x)}{|x|} = \const(p)
\int_0^\infty \E^{-r^2/2} r^{p-2} \, \D r < \infty $, and the same holds for $
\int_{\R^p} \frac{\mu(\D x)}{|x+h+y|} $. Finally, $ \log(1+C|y|) \le C|y| $.
\end{proof}
\end{sloppypar}

The case $ p=1 $ is different. Here \eqref{3.6} with $ g(y) = C|y| $ (for
small $y$) implies that $\psi$ is locally of bounded variation, therefore,
locally bounded (unlike $ \log|\cdot|$), since $ \int_{\R} \exp | \psi(x+y) -
\psi(x-y) | \, \mu_h(\D x) = 1 + 2|y| \int_{\R} |\psi'(x)| \, \mu_h(\D x) + o(y)
$ as $ y \to 0 $, whenever $ \psi $ is continuously differentiable and compactly
supported (otherwise approximate).

\begin{lemma}\label{3.16}
Let the function $ \psi : \R \to \R $ be given by $ \psi(x) = \frac12 \log |x|
$, and $ \mu(\D x) = (2\pi)^{-1/2} \exp (-x^2/2) \, \D x $. Then
condition \eqref{3.6} is satisfied by (this $\psi$ and) $ g : \R \to \R $ given
by $ g(y) = C\sqrt{|y|} $, provided that the constant $C$ is large enough.
\end{lemma}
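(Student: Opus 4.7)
The plan is to follow the template of the proof of Lemma \ref{3.15}, but with everything under a square root so that the singularity at the origin becomes $|x|^{-1/2}$, which is Gaussian-integrable in dimension $1$ (whereas $|x|^{-1}$ is not). Since $\psi(x)=\tfrac12\log|x|$, the integrand in \eqref{3.6} is
\[
\exp\bigl|\psi(x+y)-\psi(x-y)\bigr| = \sqrt{\max\!\Bigl(\tfrac{|x+y|}{|x-y|},\tfrac{|x-y|}{|x+y|}\Bigr)}.
\]

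Next I would turn this maximum into an additive bound. Writing $a=|x+y|$, $b=|x-y|$, one has $\max(a/b,b/a)-1 = |a-b|/\min(a,b)$, together with $|a-b|\le 2|y|$ (by the triangle inequality) and $1/\min(a,b) \le 1/a+1/b$, giving
\[
\max\!\Bigl(\tfrac{|x+y|}{|x-y|},\tfrac{|x-y|}{|x+y|}\Bigr) \le 1 + \frac{2|y|}{|x-y|} + \frac{2|y|}{|x+y|}.
\]
Then the elementary inequalities $\sqrt{1+t}\le 1+\sqrt{t}$ and $\sqrt{s+t}\le\sqrt{s}+\sqrt{t}$ (for $s,t\ge0$) yield
\[
\exp\bigl|\psi(x+y)-\psi(x-y)\bigr| \le 1 + \sqrt{2|y|}\,\bigl(|x-y|^{-1/2}+|x+y|^{-1/2}\bigr).
\]

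Integration against $\mu_h$ reduces, after the substitution in the definition of $\mu_h$, to bounding $\int_{\R} |x+h\mp y|^{-1/2}\,\mu(\D x)$. Here Lemma \ref{3.2} applies with $f(r)=r^{-1/2}$ and $g$ the Gaussian density (both positive and decreasing on $(0,\infty)$), giving
\[
\int_{\R} \frac{\mu(\D x)}{\sqrt{|x+h\mp y|}} \le \int_{\R} \frac{\mu(\D x)}{\sqrt{|x|}} = K < \infty,
\]
because $|x|^{-1/2}e^{-x^2/2}$ is integrable near $0$ in dimension $p=1$. This is the decisive point that distinguishes the present lemma from Lemma \ref{3.15}, where the analogous integrand was $|x|^{-1}$, divergent for $p=1$ but not for $p\ge 2$.

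Putting it all together, $\int_{\R}\exp|\psi(x+y)-\psi(x-y)|\,\mu_h(\D x) \le 1 + 2K\sqrt{2|y|}$, so by $\log(1+t)\le t$,
\[
\log\!\int_{\R}\exp|\psi(x+y)-\psi(x-y)|\,\mu_h(\D x) \le 2K\sqrt{2|y|} = C\sqrt{|y|},
\]
with $C=2K\sqrt{2}$, as required. There is no genuine obstacle once the right decomposition $\max(a/b,b/a) = 1 + |a-b|/\min(a,b)$ is used; the trap to avoid is the weaker bound $\max(a/b,b/a)\le 1+2|y|/|x-y|$ (used in Lemma \ref{3.15}), which would leave an additive $2$ inside the logarithm and fail to vanish at $y=0$.
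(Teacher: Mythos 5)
Your proof is correct and follows essentially the same route as the paper's: both reduce $\exp\frac12\bigl|\log|x+y|-\log|x-y|\bigr|$ to $\sqrt{1+\tfrac{2|y|}{|x-y|}+\tfrac{2|y|}{|x+y|}}$, split the square root via $\sqrt{1+s+t}\le 1+\sqrt{s}+\sqrt{t}$, apply Lemma~\ref{3.2} to bound $\int|x|^{-1/2}\,\mu(\D x)$, and finish with $\log(1+t)\le t$. (One small inaccuracy in your closing remark: the paper's proof of Lemma~\ref{3.15} does not stop at $\max(a/b,b/a)\le 1+2|y|/|x-y|$; it also passes to the sum $1+2|y|/|x-y|+2|y|/|x+y|$ before integrating, so the contrast you draw is not quite what distinguishes the two lemmas — the real difference is the extra square root taming the $|x|^{-1}$ singularity to $|x|^{-1/2}$ in dimension one, which you do identify as decisive.)
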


{\raggedright
\begin{proof}
We modify the proof of Lemma \ref{3.15} as follows:
$ \int_\R \exp \frac12 \big| {\log |x+y|} - \log |x-y| \big| \, \mu_h(\D x)
\le \int_\R \max \( \sqrt{ 1+\frac{2|y|}{|x-y|} },
\sqrt{ 1+\frac{2|y|}{|x+y|} } \) \, \mu_h(\D x) \le
\int_\R \sqrt{ 1+\frac{2|y|}{|x-y|}+\frac{2|y|}{|x+y|} } \, \mu_h(\D x) \le
\int_\R \( 1 + \sqrt{\frac{2|y|}{|x-y|}} + \sqrt{\frac{2|y|}{|x+y|}} \) \,
\mu_h(\D x) \le 1 + \sqrt{2|y|} \cdot 2 \int_\R \frac{ \mu(\D x) }{ \sqrt{|x|} }
$. 
\end{proof}
}

Now we abandon the generality of values in $\R^p$ and return to real values
($p=1$) and complex values ($p=2$). We introduce two standard Gaussian measures,
$ \gR $ on $ \R $ and $ \gC $ on $ \C $, by
\[
\gR(\D x) = (2\pi)^{-1/2} \E^{-x^2/2} \, \D x \, , \quad
\gC(\D z) = \pi^{-1} \E^{-|z|^2} \, \D z
\]
(about $\D z$ see footnote on page 2). We use the (real-valued)
white noise $(w_t)_{t\in\R}$, and the complex-valued
white noise $ \wC = \frac1{\sqrt2} ( w_1 + \I w_2 ) $ (recall the text after
Prop.~\ref{3.1}).
For every $ \phi \in L_2(\R) $ such that $ \int_\R \phi^2(t) \, \D t = 1 $ the
random variable $ \int_\R \phi(t) w_t \, \D t $ is distributed $ \gR $. For
every $ \phi \in L_2(\R\mapsto\C) $ such that $ \int_\R |\phi(t)|^2 \, \D t = 1
$ the random variable $ \int_\R \phi(t) (\wC)_t \, \D t $ is distributed $ \gC
$. We consider real-valued processes of the form $ \phi * w $ where $ \phi \in
L_2 $ (recall the text before Prop.~\ref{2.4}); and similarly, complex-valued
processes $ \phi * \wC $ where $ \phi \in L_2(\R\to\C) $. Given $ G = \phi * w
$, we consider its split $ (G^0,G^-,G^+) $ where
\begin{equation}\label{3.*}
G^0 = \phi * w^0 \, , \quad G^- = \phi * w^- \, , \quad G^+ = \phi * w^+ \, ;
\end{equation}
here $ w^-, w^+ $ are two
independent copies of $w$; $ w_t^0 = w_t^- $ for $ t<0 $, and $ w_t^0 = w_t^+ $
for $ t\ge0 $ (recall the proof of Prop.~\ref{2.4}). The same applies in the
complex-valued case (with $ \wC $ instead of $w$, and accordingly, $ \wC^0,
\wC^-, \wC^+ $). We know that the processes $ G^+, G^0 $ (and $ G^- $ as well)
are identically distributed. Here is a stronger claim.

\begin{lemma}\label{3.9}
The processes $ G^+ + G^0 $ and $ G^+ - G^0 $ are independent (in both cases,
real-valued and complex-valued).
\end{lemma}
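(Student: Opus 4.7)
The plan is to exhibit $G^+ + G^0$ and $G^+ - G^0$ as measurable functions of two mutually independent sub-collections of the underlying white noise, so that independence of the two processes becomes automatic. The key elementary fact powering this is that for two i.i.d.\ centered Gaussians of equal variance, the sum and difference are again independent Gaussians; applied at the white-noise level, this resolves the overlap between $w^+$ and $w^0$ on the negative half-line into two independent pieces.

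Concretely, I would first focus on the negative half-line, where by construction $w^0 = w^-$, so the restrictions $w^+_{-} := w^+|_{(-\infty,0)}$ and $w^-_{-} := w^-|_{(-\infty,0)}$ are two i.i.d.\ white noises. Form
\[
\eta_+ = \tfrac{1}{\sqrt2}(w^+_{-} + w^-_{-}), \qquad \eta_- = \tfrac{1}{\sqrt2}(w^+_{-} - w^-_{-});
\]
the sum/difference trick shows that $\eta_+$ and $\eta_-$ are independent white noises on $(-\infty,0)$. Together with $w^+|_{[0,\infty)}$, which is independent of everything on $(-\infty,0)$, this gives three mutually independent collections. On $[0,\infty)$ one has $w^+ + w^0 = 2w^+$ and $w^+ - w^0 = 0$; on $(-\infty,0)$ one has $w^+ + w^0 = \sqrt2\,\eta_+$ and $w^+ - w^0 = \sqrt2\,\eta_-$. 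Hence $w^+ + w^0$ is a measurable function of the pair $\bigl(w^+|_{[0,\infty)},\,\eta_+\bigr)$, while $w^+ - w^0$ is a measurable function of $\eta_-$ alone.

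Since convolution with $\phi$ is a measurable linear operation on $L_2$-valued white noise, the same decomposition descends to the convolved processes: $G^+ + G^0 = \phi*(w^+ + w^0)$ is a function of $\bigl(w^+|_{[0,\infty)},\,\eta_+\bigr)$, and $G^+ - G^0 = \phi*(w^+ - w^0)$ is a function of $\eta_-$ alone, so they are independent. The complex-valued case is structurally identical: write $\wC = \frac{1}{\sqrt2}(w_1 + \I w_2)$, apply the same construction to each of $w_1, w_2$, and combine; or, equivalently, repeat the argument verbatim with $\wC^\pm, \wC^0$ in place of $w^\pm, w^0$, since the sum/difference trick works just as well for $\C$-valued i.i.d.\ centered Gaussians with rotation-invariant law.

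No serious obstacle is anticipated. The one point that deserves a line of justification is the claim that $\eta_+$ and $\eta_-$ are \emph{independent} white noises rather than merely uncorrelated; this is a standard Gaussian computation, confirmed by checking that for every $f,g\in L_2(\R)$ the joint Gaussian pair $\bigl(\int f\,d\eta_+,\int g\,d\eta_-\bigr)$ has zero covariance, which it visibly does by bilinearity and the independence of $w^+_-$ and $w^-_-$. As a sanity check one may alternatively verify the lemma directly by a covariance calculation: $\Ex(G^+_s+G^0_s)\overline{(G^+_t-G^0_t)} = \Ex G^+_s \overline{G^+_t} - \Ex G^0_s \overline{G^0_t} + \Ex G^0_s \overline{G^+_t} - \Ex G^+_s \overline{G^0_t}$, where the first two terms are equal since $G^+$ and $G^0$ have the same law, and the last two cancel by symmetry of the $w^0$-construction in the positive time axis; joint Gaussianity then promotes zero covariance to independence.
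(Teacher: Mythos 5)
Your proof is correct and follows essentially the same route as the paper: both decompose the white noise into the three mutually independent pieces $(w^+_s)_{s\ge0}$, $(w^+_s + w^-_s)_{s<0}$, and $(w^+_s - w^-_s)_{s<0}$, observe that $G^+ - G^0$ depends only on the last piece while $G^+ + G^0$ depends only on the first two, and conclude. The normalization by $\frac{1}{\sqrt2}$ and the closing covariance sanity check are cosmetic additions; the substance matches.
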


\begin{proof}
We do it in the real-valued case; the complex-valued case is similar. We have $
G_t^+ - G_t^0 = \int \phi(t-s) w_s^+ \, \D s - \int \phi(t-s) w_s^0 \, \D s =
\int_{s<0} \phi(t-s) w_s^+ \, \D s + \int_{s\ge0} \phi(t-s) w_s^+ \, \D s -
\int_{s<0} \phi(t-s) w_s^- \, \D s - \int_{s\ge0} \phi(t-s) w_s^+ \, \D s =
\int_{s<0} \phi(t-s) (w_s^+ - w_s^-) \, \D s $. Similarly, $ G_t^+ + G_t^0 =
\int_{s<0} \phi(t-s) (w_s^+ + w_s^-) \, \D s + \int_{s\ge0} \phi(t-s) \cdot
2w_s^+ \, \D s $. It remains to note that $ (w_s^+)_{s\ge0} $, $ (w_s^+ +
w_s^-)_{s<0} $, and $ (w_s^+ - w_s^-)_{s<0} $ are independent.
\end{proof}

In the spirit of Def.~\ref{noisy}, given two random processes $ X,Y $, we say
that $X$ is \emph{noisier} than $Y$, if there exist (on some probability space)
two independent processes $ U,V $ such that $ U+V $ is distributed like $X$, and
$V$ is distributed like $Y$.\,\footnote{%
 This applies in both cases, real-valued and complex-valued.}

\begin{lemma}\label{3.10}
The process $ G^+ + G^0 $ is noisier than $ \sqrt2 G^0 $ (in both cases,
real-valued and complex-valued).
\end{lemma}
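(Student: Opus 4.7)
The plan is to give an explicit construction of the decomposition $U+V$ on an enlarged probability space, and verify equality in distribution via covariances. I will do the real-valued case in detail; the complex-valued case is identical up to replacing $w$ by $\wC$.

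From the computation in the proof of Lemma \ref{3.9} we already have
\[
G^+_t + G^0_t = \int_{s<0} \phi(t-s)(w^+_s+w^-_s)\,\D s + 2\int_{s\ge0}\phi(t-s)w^+_s\,\D s \, .
\]
Thus $G^++G^0$ is a centered Gaussian process driven by a non-stationary white noise which, informally, has intensity $\sqrt2$ on $(-\infty,0)$ (since $w^++w^-$ has local variance $2$) and intensity $2$ on $[0,\infty)$ (since $2w^+$ has local variance $4$). Its covariance function is
\[
K(t,t') := 2\int_{s<0}\phi(t-s)\phi(t'-s)\,\D s + 4\int_{s\ge0}\phi(t-s)\phi(t'-s)\,\D s \, .
\]

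On a new probability space, introduce two independent standard white noises $\ti w^{(1)}, \ti w^{(2)}$ on $\R$, and set $V=\sqrt2\,\phi*\ti w^{(1)}$ and $U_t=\sqrt2\int_{s\ge0}\phi(t-s)\ti w^{(2)}_s\,\D s$. Then $U,V$ are independent (driven by independent noises), both are centered Gaussian, and $V\stackrel{d}{=}\sqrt2\,G\stackrel{d}{=}\sqrt2\,G^0$ since $\phi*\ti w^{(1)}$ has the same distribution as the stationary process $G=\phi*w$, which in turn is distributed like $G^0$. The covariance of $U+V$ splits as $\Ex V_tV_{t'}+\Ex U_tU_{t'}=2\int\phi(t-s)\phi(t'-s)\,\D s+2\int_{s\ge0}\phi(t-s)\phi(t'-s)\,\D s$, and a direct algebraic check shows this equals $K(t,t')$ above; for instance, one rewrites $K(t,t')=2\int\phi(t-s)\phi(t'-s)\,\D s+2\int_{s\ge0}\phi(t-s)\phi(t'-s)\,\D s$. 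Since $G^++G^0$ and $U+V$ are both centered Gaussian processes with the same covariance, they are equal in distribution, witnessing that $G^++G^0$ is noisier than $\sqrt2\,G^0$.

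The complex-valued case goes through verbatim with $\wC^\pm,\wC^0,\ti\wC^{(1)},\ti\wC^{(2)}$ in place of $w^\pm,w^0,\ti w^{(1)},\ti w^{(2)}$, using that the complex white noise has the analogous independence properties and that the real and imaginary parts of the relevant convolutions are still centered and Gaussian with matching covariance and pseudo-covariance structure.

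There is no serious obstacle: the only thing that could go wrong is a missing factor of $2$, so the main thing to get right is the accounting of local variances ($2$ vs $4$) on the two half-lines and the matching between $K(t,t')$ and the covariance of $U+V$. Once the covariances agree, Gaussianity closes the argument.
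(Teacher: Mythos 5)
Your proof is correct and uses the same decomposition as the paper: $V$ a copy of $\sqrt2\,G^0$ and $U_t=\sqrt2\int_{s\ge0}\phi(t-s)\,\ti w_s\,\D s$ for an auxiliary independent white noise. The only difference is how the distributional identity is established: the paper keeps $V=\sqrt2\,G^0$ on the original space, adjoins one independent copy $\ti w^+$, and matches the driving noises directly (on $(-\infty,0)$, $w^++w^-$ is distributed like $\sqrt2\,w^-$; on $[0,\infty)$, $2w^+$ is distributed like $\sqrt2(w^++\ti w^+)$; and the two halves are independent), whereas you rebuild $U,V$ from fresh noises and verify equality of covariance kernels, correctly noting that in the complex case the pseudo-covariances (all vanishing by circular symmetry of $\wC$) must also be checked. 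These are equivalent ways to close the same argument.
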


\begin{sloppypar}
\begin{proof}
\quad
We do it in the real-valued case; the complex-valued case is similar.
In addition to $ w^-, w^+ $ we introduce an independent copy $ \ti w^+ $ of $
w^+ $ (so that $ w^-, w^+, \ti w^+ $ are independent), define $ U_t = \sqrt2
\int_{s\ge0} \phi(t-s) \ti w_s^+ \, \D s $, $ V_t = \sqrt2 G_t^0 $, and observe
that $ G_t^+ + G_t^0 = { \int_{s<0} \phi(t-s) ( w_s^+ + w_s^- ) \, \D s } + { 2
\int_{s\ge0} \phi(t-s) w_s^+ \, \D s } $ is distributed like $ { \int_{s<0}
\phi(t-s) \cdot \sqrt2 w_s^- \, \D s } + { \sqrt2 \int_{s\ge0} \phi(t-s) ( w_s^+ +
\ti w_s^+ ) \, \D s } = { \sqrt2 \int_\R \phi(t-s) w_s^0 \, \D s } + \sqrt2
\int_{s\ge0} \phi(t-s) \ti w_s^+ \, \D s = \sqrt2 G_t^0 + U_t $.
\end{proof}
\end{sloppypar}

\begin{lemma}\label{3.11}
(a)
Let $ G = \phi * w $ where $ \phi \in L_2(\R) $ satisfies $ \int_\R \phi^2(t) \,
\D t = 1 $ and $ \esssup_{t\in\R} (1+|t|)^\bal \phi^2(t) < \infty $ for some $
\bal > 2 $. Then for every $ \eps > 0 $ there exists $ C $ (dependent only on $
\eps $, $ \bal $ and the $ \esssup $) such that for every $ M \ge C $ and every
$n$ the sequence $ ((1+\eps)G_M, (1+\eps)G_{2M}, \dots, (1+\eps)G_{nM}) $ is
\noisy{\gR}.

(b)
Let $ G = \phi * \wC $ where $ \phi \in L_2(\R\to\C) $ satisfies $ \int_\R
|\phi(t)|^2 \, \D t = 1 $ and $
\esssup_{t\in\R} (1+|t|)^\bal |\phi(t)|^2 < \infty $ for some $ \bal > 2 $.
Then for every $ \eps > 0 $ there exists $ C $ (dependent only on $
\eps $, $ \bal $ and the $ \esssup $) such that for every $ M \ge C $ and every
$n$ the sequence $ ((1+\eps)G_M, (1+\eps)G_{2M}, \dots, (1+\eps)G_{nM}) $ is
\noisy{\gC}.
\end{lemma}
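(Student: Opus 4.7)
The plan is to use joint Gaussianity to reduce the noisy-$\mu$ assertion to a single positive-semidefinite (PSD) condition on the covariance matrix. In case (b) the complex Gaussian process $G$ has vanishing pseudo-covariance, $\Ex G_s G_t = 0$, because $\Ex \wC_u \wC_v = 0$ (the contributions of $w_1$ and $w_2$ cancel), so $(G_M,\dots,G_{nM})$ is a circular complex Gaussian vector entirely determined by its Hermitian covariance matrix $\Sigma_{ij} = \Ex G_{iM}\overline{G_{jM}}$; in case (a) this is just ordinary real covariance. The crucial observation is that, \emph{provided} $(1+\eps)^2\Sigma - I$ is Hermitian PSD, I may take $Z = (Z_1,\dots,Z_n)$ i.i.d.\ $\gR$ (resp.\ $\gC$) and $Y$ an independent centered Gaussian (real, resp.\ circular complex) vector with covariance $(1+\eps)^2\Sigma - I$; then $Y+Z$ is distributed like $(1+\eps)(G_M,\dots,G_{nM})$, giving exactly the noisy-$\mu$ decomposition of Def.~\ref{noisy}.

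The second step is bounding the off-diagonal entries of $\Sigma$ using the decay hypothesis on $\phi$. By translation invariance $\Sigma_{ij} = c_{|i-j|}$ with $c_k = \int_\R \phi(s)\overline{\phi(s+kM)}\,\D s$ and $c_0 = 1$. Writing $C_1 = \esssup_{t\in\R}(1+|t|)^\bal|\phi(t)|^2$, I get $|\phi(t)| \leq \sqrt{C_1}(1+|t|)^{-\bal/2}$, and since $\bal > 2$ this gives $\phi \in L^1$ with $L^1$-norm depending only on $C_1$ and $\bal$. The technical point is that for $k \geq 1$ the two sets $\{|s|\geq kM/2\}$ and $\{|s+kM|\geq kM/2\}$ cover $\R$; on the first, $|\phi(s)| \leq \sqrt{C_1}(1+kM/2)^{-\bal/2}$, and on the second, $|\phi(s+kM)| \leq \sqrt{C_1}(1+kM/2)^{-\bal/2}$, while the opposite factor is integrated against $\|\phi\|_{L^1}$. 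This yields $|c_k| \leq 2\sqrt{C_1}\|\phi\|_{L^1}(1+kM/2)^{-\bal/2}$ and hence a row-sum bound $\sum_{j\neq i}|\Sigma_{ij}| \leq 2\sum_{k\geq 1}|c_k| \leq C_3 M^{-\bal/2}$ that is uniform in $i$ and in $n$, using the convergence of $\sum_k k^{-\bal/2}$.

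With this estimate in hand, the conclusion is short. The diagonal entries of $(1+\eps)^2\Sigma - I$ all equal $2\eps + \eps^2 > 0$, and the off-diagonal row sums are at most $(1+\eps)^2 C_3 M^{-\bal/2}$. Choosing $M \geq C$ with $C$ large enough (depending only on $\eps$, $\bal$, and $C_1$), the matrix is strictly diagonally dominant, hence Hermitian PSD by Gershgorin's theorem; this closes the argument uniformly in $n$. I expect the main obstacle to be the off-diagonal bound of the second step: the single hypothesis $(1+|t|)^\bal\phi^2 \in L^\infty$ must simultaneously yield a pointwise tail estimate \emph{and} an $L^1$-bound on $\phi$, and the two-region split above is precisely what converts one assumption into both ingredients. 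This is also where the threshold $\bal > 2$ is actually used (anything weaker would break one of the two needs).
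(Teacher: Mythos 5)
Your proposal is correct and takes essentially the same route as the paper. The paper establishes $\Ex(a_1 G_M+\dots+a_n G_{nM})^2 \ge \frac1{(1+\eps)^2}\sum a_k^2$ by bounding $\sum_{\ell\ne k}|\Ex G_{kM}G_{\ell M}|\le\eps$ (using $|a_ka_\ell|\le\frac12(a_k^2+a_\ell^2)$), then defines the auxiliary Gaussian $Y$ with covariance $(1+\eps)^2\Sigma-I$; your Gershgorin-diagonal-dominance formulation of the PSD condition is algebraically the same estimate, and your two-region split of $\R$ to bound $|c_k|$ is an equivalent derivation of the paper's bound $|\Ex G_{-h}G_h|\le 2A^2(1+h)^{-\bal/2}\int_\R(1+|s|)^{-\bal/2}\,\D s$. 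The one point worth highlighting is your explicit verification that the pseudo-covariance $\Ex G_sG_t=0$ in case (b): the paper only says ``trivial modifications'' and relegates circularity to a footnote on what a complex Gaussian sequence is, whereas making this vanishing explicit is precisely what legitimizes determining the joint law of $(G_M,\dots,G_{nM})$ by the Hermitian covariance alone and constructing $Y$ as a circular Gaussian vector.
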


\begin{sloppypar}
\begin{proof}
\textsc{The real-valued case} (a). WLOG, $ \eps \le 1/2 $ and $ |\phi(t)| \le
A (1+|t|)^{-\bal/2} $ for all $t$ (and a given $A$).
For arbitrary $h>0$ we have $ | \Ex G_{-h} G_h | = | \int_\R \phi(-h-s)
\phi(h-s) \, \D s | \le { 2 A^2 \int_0^\infty (1+|s+h|)^{-\bal/2} (1+|s-h|)^{-\bal/2} \,
\D s } \le 2 A^2 (1+h)^{-\bal/2} \int_\R (1+|s|)^{-\bal/2} \, \D s $. Thus (using
stationarity),
\[
\sum_{k=1}^\infty | \Ex G_0 G_{kM} | \le 2 A^2 \bigg( \int_\R \frac{\D
s}{(1+|s|)^{\bal/2}} \bigg) \sum_{k=1}^\infty \Big( 1 + \frac{kM}2
\Big)^{-\bal/2} \to 0 \quad \text{as } M \to \infty \, .
\]
We take $C$ such that $ \sum_{k=1}^\infty | \Ex G_0 G_{kM} | \le \eps/2 $ for
all $ M \ge C $. For such $M$, for arbitrary $ n $ and $ a_1,\dots,a_n \in \R $,
using the inequality $ { |a_k a_\ell| \le \frac12(a_k^2+a_\ell^2) } $, we have
$ \Ex ( a_1 G_M + \dots + a_n G_{nM} )^2 \ge { \sum_{k=1}^n a_k^2 \( \Ex G_{kM}^2
- \sum_{\ell\ne k} |\Ex G_{kM} G_{\ell M}| \) } \ge \sum_{k=1}^n a_k^2 (1-\eps)
\ge \frac1{(1+\eps)^2} (a_1^2+\dots+a_n^2) $ (since $\eps\le1/2$). It follows
that the Gaussian sequence $ ((1+\eps)G_M, (1+\eps)G_{2M}, \dots,
(1+\eps)G_{nM}) $ is \noisy{\gR}, since we may take a Gaussian sequence
$(Y_1,\dots,Y_n)$, independent of $ (G_M, G_{2M}, \dots, G_{nM}) $, such that $
\Ex (a_1 Y_1 + \dots + a_n Y_n)^2 = (1+\eps)^2 \Ex ( a_1 G_M + \dots + a_n
G_{nM} )^2 - (a_1^2+\dots+a_n^2) $ for all $ a_1,\dots,a_n \in \R $.

\textsc{The complex-valued case} (b) needs only trivial modifications: $ \Ex
G_{-h} \overline{G_h} $ instead of $ \Ex G_{-h} G_h $; similarly, $ \int_\R
\phi(-h-s) \overline{\phi(h-s)} \, \D s $ and $ \Ex G_0 \overline{G_{kM}} $;
also, $ a_1,\dots,a_n \in \C $, $ a_k \overline{a_\ell} $, $ \Ex | a_1 G_M +
\dots + a_n G_{nM} |^2 $, $ \Ex |G_{kM}|^2 $, $ \Ex G_{kM} \overline{G_{\ell M}}
$, and complex-valued Gaussian sequences;\footnote{%
 A centered complex-valued Gaussian sequence is the image, under a
 complex-linear transformation, of a sequence of independent random variables
 distributed $ \gC $ each.}
$ \Ex |a_1 Y_1 + \dots + a_n Y_n|^2 = (1+\eps)^2 \Ex | a_1 G_M + \dots + a_n
G_{nM} |^2 - (|a_1|^2+\dots+|a_n|^2) $ for all $ a_1,\dots,a_n \in \C $.
\end{proof}
\end{sloppypar}

\begin{sloppypar}
Combining Lemma \ref{3.11} and Lemma \ref{3.10} we see that the sequence $
\((1+\eps)\frac{G^+_{kM} + G^0_{kM}}{\sqrt2}\)_{k=1}^n $ is \noisy{\ga} (for $ M $ large
enough), where $\ga$ is $ \gR $ in the real-valued case, and $ \gC $ in the
complex-valued case. Thus, Lemma \ref{3.5} applies to the pair of sequences
(independent by Lemma \ref{3.9})
\end{sloppypar}
\[
\Big( \rho\frac{G^+_{kM} + G^0_{kM}}{2} \Big)_{k=1}^n \, , \quad \Big(
\rho\frac{G^+_{kM} - G^0_{kM}}{2} \Big)_{k=1}^n
\]
whenever $ \rho > \sqrt2 $, and gives
\[
\Ex \exp \sum_{k=1}^n | \psi(\rho G^+_{kM}) - \psi(\rho G^0_{kM}) | \le \Ex \exp
\sum_{k=1}^n g\Big( \rho \frac{G^+_{kM} - G^0_{kM}}{2} \Big)
\]
provided that $ \psi, g $ satisfy \eqref{3.6} with the Gaussian measure $ \mu =
\ga $.
We note that $M$ does not depend on $n$, thus,
\[
\Ex \exp \sum_{k=1}^\infty | \psi(\rho G^+_{kM}) - \psi(\rho G^0_{kM}) | \le \Ex \exp
\sum_{k=1}^\infty g\Big( \rho \frac{G^+_{kM} - G^0_{kM}}{2} \Big) \, .
\]
Similarly,
\begin{multline*}
\Ex \exp \sum_{k=0}^\infty | \psi(\rho G^+_{(\theta+k)M}) -
 \psi(\rho G^0_{(\theta+k)M}) | \le \\
\Ex \exp \sum_{k=0}^\infty g\Big( \frac\rho2 \( G^+_{(\theta+k)M} -
 G^0_{(\theta+k)M} \) \Big)
\end{multline*}
for every $ \theta \in [0,1] $ (the shift by $ (\theta-1) M $, trivial for the
stationary process $G$, transfers to the nonstationary process $ G^+ + G^0 $ by
Lemma \ref{3.10} as before). We also note that generally
\[
\int_0^1 \sum_{k=0}^\infty f \( (\theta+k)M \) \, \D\theta = \sum_{k=0}^\infty
\int_0^1 \dots = \sum_{k=0}^\infty \frac1M
\int_{kM}^{(k+1)M} f(t) \, \D t = \frac1M \int_0^\infty f(t) \, \D t
\]
for measurable $ f : [0,\infty) \to [0,\infty) $; and by convexity of $ \exp $,
\[
\exp \frac1M \int_0^\infty \!\!\!\! f(t) \, \D t = \exp \int_0^1 \sum_{k=0}^\infty f \(
(\theta+k)M \) \, \D\theta \le \int_0^1 \!\! \exp \sum_{k=0}^\infty f \(
(\theta+k)M \) \, \D\theta \, ;
\]
thus
\begin{multline}\label{3.13}
\Ex \exp \frac1M \int_0^\infty | \psi(\rho G^+_t) - \psi(\rho G^0_t) | \,
 \D t \le \\
\int_0^1 \Ex \exp \sum_{k=0}^\infty | \psi(\rho G^+_{(\theta+k)M}) -
 \psi(\rho G^0_{(\theta+k)M}) | \, \D\theta \le \\
\int_0^1 \Ex \exp \sum_{k=0}^\infty g\Big( \frac \rho 2 \( G^+_{(\theta+k)M} -
 G^0_{(\theta+k)M} \) \Big) \, \D\theta
\end{multline}
for all $ \rho > \sqrt2 $ and $M$ large enough (according to Lemma
\ref{3.11} for $ \eps = \sqrt2 - 1 $); this applies in both cases, real-valued
and complex-valued. We summarize.
\enlargethispage{1pt}

\begin{sloppypar}
\begin{lemma}\label{3.14}
(a)
Let $ \phi \in L_2(\R) $ satisfy $ \int_\R \phi^2(t) \, \D t = 1 $ and $
\esssup_{t\in\R} (1+|t|)^\bal \phi^2(t) < \infty $ for some $ \bal > 2 $; and $
\psi : \R \to \R $ be Lebesgue measurable, satisfying $ \int_\R \psi(u) \gR(\D u) =
0 $. Then there exists $ C_0 $ (dependent only on $\bal$ and the $\esssup$) such
that for every $ C \ge C_0 $ the following two conditions are sufficient for
\splittability{C} of the process $ \psi(\phi*w) $:
\begin{gather*}
\tag{1}  \int_{-\infty}^\infty \exp \frac1C |\psi(u)| \, \gR(\D u) \le 2 ; \\
\tag{2}  \int_0^1 \Ex \exp \sum_{k=0}^\infty g ( G^\al_{\al(\theta+k)C}
 - G^0_{\al(\theta+k)C} ) \, \D\theta \le 2 \quad \text{for } \al=\pm1 \, ;
\end{gather*}
here $ G^0,G^-, G^+ $ are as in \eqref{3.*}, and $ g : \R \to \R $ is defined
by\footnote{%
 Here $ \gR\(\D(x-h)\) $ means $ (2\pi)^{-1/2} \E^{-(x-h)^2/2} \, \D x $; that
 is, $ \ga_{\scriptscriptstyle\R,h}(\D x) $, where $
 \ga_{\scriptscriptstyle\R,h} $ is $ \gR $ shifted by $ h $.}
\[
\tag{3} g(y) = \sup_{h \in \R} \, \log \int_{-\infty}^\infty \!\!\! \exp 2 \bigg|
\psi \Big( \frac{x+y}{2} \Big) - \psi \Big( \frac{x-y}{2} \Big) \bigg| \,
\gR\(\D(x-h)\) \, .
\]

(b)
The same holds in the complex-valued setup, with the following trivial
modifications: $ \phi \in L_2(\R\to\C) $; $|\phi(t)|^2$ (rather than
$\phi^2(t)$); $ \psi : \C \to \R $; $ \int_\C \dots \gC(\dots) $ (rather than $
\int_\R \dots \gR(\dots) $); $ \wC $ (rather than $w$); $ g : \C \to \R $; and $
h,y \in \C $.
\end{lemma}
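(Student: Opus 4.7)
The plan is to verify the three items of Definition~\ref{2.spli} (simplified by Remark~\ref{2.rem}) for the rescaled process $(1/C)\psi(\phi*w)$. Item (b) is immediate from the hypothesis $\int_\R \psi(u) \, \gR(\D u) = 0$. Item (a) follows from condition~(1) by Jensen's inequality applied to the probability measure $\D t$ on $[0,1)$, Fubini, and stationarity:
\[
\Ex \exp \frac1C \! \int_{[0,1)} \!\! |\psi(G_t)| \, \D t \le \Ex \! \int_{[0,1)} \!\! \exp \frac1C |\psi(G_t)| \, \D t = \int_\R \exp \frac1C |\psi(u)| \, \gR(\D u) \le 2 .
\]

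The substantive item is (c). I will use the split $(\psi(G^0), \psi(G^-), \psi(G^+))$ of $\psi(G)$ obtained from (\ref{3.*}) via (\ref{2.15}), whose leak is $Y_t = \psi(G^{\sgn t}_t) - \psi(G^0_t)$. Writing $I_\pm = \int_{\pm t > 0} |\psi(G^\pm_t) - \psi(G^0_t)| \, \D t$, convexity of $\exp$ reduces the required $\Ex \exp \frac1C(I_- + I_+) \le 2$ to the two inequalities $\Ex \exp \frac2C I_\pm \le 2$; I will treat $I_+$, the case $I_-$ being symmetric and using condition~(2) with $\al = -1$. Combining the Riemann-sum identity $\int_0^\infty f(t) \, \D t = M \int_0^1 \sum_{k=0}^\infty f((\theta+k)M) \, \D\theta$ with Jensen in $\theta$ yields
\[
\Ex \exp \frac 2M I_+ \le \int_0^1 \Ex \exp \sum_{k=0}^\infty 2 \bigl| \psi(G^+_{(\theta+k)M}) - \psi(G^0_{(\theta+k)M}) \bigr| \, \D\theta .
\]

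For each $\theta$ the inner expectation is to be bounded via Lemma~\ref{3.5}, applied with $\mu = \gR$, with the function $2\psi(\cdot/2)$ in place of $\psi$ (so that the hypothesis~\eqref{3.6} becomes exactly the defining formula~(3) for $g$), and with the sequences $X_k = G^+_{(\theta+k)M} + G^0_{(\theta+k)M}$, $Y_k = G^+_{(\theta+k)M} - G^0_{(\theta+k)M}$; note that $2\psi((X_k \pm Y_k)/2) = 2\psi(G^\pm_{(\theta+k)M})$. These two sequences are independent by Lemma~\ref{3.9}. The key point is that $(X_k)_k$ is \noisy{\gR}: by Lemma~\ref{3.10} it is noisier than $(\sqrt 2\, G^0_{(\theta+k)M})_k$, and this latter sequence is \noisy{\gR} by Lemma~\ref{3.11} applied with $\eps = \sqrt 2 - 1$, provided $M \ge C_0$ is large enough. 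Lemma~\ref{3.5} then yields the bound by $\Ex \exp \sum_k g(G^+_{(\theta+k)M} - G^0_{(\theta+k)M})$, and setting $M = C$ turns the resulting estimate into condition~(2), completing item~(c). The complex-valued part~(b) of the lemma goes through verbatim with $\gR, w, \R$ replaced by $\gC, \wC, \C$. The only delicate point, namely the noisiness of $(X_k)_k$, hinges on the exact match between the factor $\sqrt 2$ produced by Lemma~\ref{3.10} and the threshold $1+\eps = \sqrt 2$ at which Lemma~\ref{3.11} delivers noisiness — a compatibility already exploited in the derivation of~(\ref{3.13}) preceding the lemma, so the present proof largely rearranges and applies ingredients already at hand.
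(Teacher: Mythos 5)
Your proof is correct and takes essentially the same route as the paper: verify Definition~\ref{2.spli}(a,b,c) for $\tfrac1C\psi(\phi*w)$, reduce (c) via convexity and the Riemann-sum/Jensen device to a bound over the shifted lattice $(\theta+k)M$, then apply Lemma~\ref{3.5} with the rescaled function $2\psi(\cdot/2)$ to the independent pair $\(G^++G^0,\,G^+-G^0\)$, whose \noisiness{\gR} is extracted from Lemmas~\ref{3.9}--\ref{3.11} at $\eps=\sqrt2-1$. The only organisational difference is that the paper first records this chain as the displayed inequality~\eqref{3.13} and then cites it in the proof, whereas you inline the derivation.
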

\end{sloppypar}

\begin{proof}
\textsc{The real-valued case} (a). We inspect \ref{2.rem}(a,b,c) for the
process $ X = \frac1C \psi(G) $ where $ G = \phi*w $.

\begin{sloppypar}
(a) $ \exp \int_{[0,1)} |X_t| \, \D t \le \int_{[0,1)} \exp |X_t| \, \D t $ by
convexity of $ \exp $, and $ \Ex \exp |X_t| = \Ex \exp \frac1C |\psi(G_t)| =
\int_\R \exp \frac1C |\psi(u)| \, \gR(\D u) $, thus, $ \Ex \exp \int_{[0,1)}
|X_t| \, \D t \le 2 $ by (1).
\end{sloppypar}

(b) $ \Ex X_0 = \frac1C \Ex \psi(G_0) = \frac1C \int_\R \psi(u) \, \gR(\D u) = 0
$ (by assumption).

(c) It is sufficient to prove that $ \Ex \exp \int_\R |Y_t| \, \D t \le 2 $ where $ Y $ is
the leak of the split $ (X^0,X^-,X^+) = \( \frac1C \psi(G^0), \frac1C \psi(G^-),
\frac1C \psi(G^+) \) $ of $ X $, and the split $(G^0,G^-,G^+)$ of $G$ is given
by \eqref{3.*}. We introduce $ \psi_2 : \R \to \R $ by $ \psi_2(x) = 2
\psi(\frac12 x) $; by (3) the pair $ \psi_2, g $ satisfies \eqref{3.6} with the
Gaussian measure $ \mu = \gR $. Thus \eqref{3.13} holds for $\psi_2$,
$\rho=2$ and $ M \ge C_0 $, where $ C_0 $ is given by Lemma \ref{3.11} for $
\eps = \sqrt2-1 $. That is,
$ \Ex \exp \frac2C \int_0^\infty | \psi(G^+_t) - \psi(G^0_t) | \, \D t \le
\int_0^1 \Ex \exp \sum_{k=0}^\infty g( G^+_{(\theta+k)C} -
G^0_{(\theta+k)C} ) \, \D\theta $. In combination with (2) it gives $ \Ex \exp 2
\int_0^\infty | X_t^+ - X_t^0 | \, \D t \le 2 $. Similarly, $ \Ex \exp 2
\int_{-\infty}^0 | X_t^- - X_t^0 | \, \D t \le 2 $. Finally, $ \Ex \exp
\int_\R |Y_t| \, \D t = \Ex \exp \( \frac12 \cdot 2 \int_{-\infty}^0 | X_t^- - X_t^0 | \,
\D t + \frac12 \cdot 2 \int_0^\infty | X_t^+ - X_t^0 | \, \D t \) \le \Ex \frac12 \( \exp
2 \int_{-\infty}^0 | X_t^- - X_t^0 | \, \D t + \exp 2 \int_0^\infty | X_t^+ - X_t^0 |
\, \D t \) \le 2 $.

\textsc{The complex-valued case} (b) needs only few trivial modifications: $ G =
\phi * \wC $; $ \gC $ instead of $ \gR $; and $ \psi_2 : \C \to \R $.
\end{proof}

\begin{remark}\label{3.15a}
If the left-hand side of (1) is finite for some $C$, then it converges to $1$
as $ C \to \infty $ by the dominated convergence theorem. Moreover, denoting it
by $f(C)$ we have $ f(MC) \le \( f(C) \)^{1/M} $, since generally $ \Ex \exp
\frac1M X \le (\Ex \exp X)^{1/M} $ for all $ M \in [1,\infty) $ and arbitrary
random variable $X$.\footnote{%
 Apply the inequality $ \Ex Y \le (\Ex Y^M)^\frac1M $ to $ Y = \E^{X/M} $.}
Similarly, denoting the right-hand side of (3) by $ g_\psi(y) $ we have $
g_{\frac1M \psi}(y) \le \frac1M g_\psi(y) $. Also, denoting the left-hand side
of (2) by $ f_g(C) $ we have $ f_{\frac1M g} (C) \le \(f_g(C)\)^{1/M} $ for all $
M \in [1,\infty) $, and $ f_g(MC) \le f_g(C) $ for all $ M \in \{1,2,3,\dots\}
$,\footnote{%
 Since $ \{ (\theta+k)M : k=0,1,\dots \} \subset \{ (\theta M \mod 1) + k :
 k=0,1,\dots \} $.}
whence $ f_{\frac1M g}(MC) \le \( f_g(C) \)^{1/M} $ for all $ M \in
\{1,2,3,\dots\} $.

Thus, replacing ``$\le2$'' with the weaker ``$<\infty$'' in (1), (2), and
replacing $ \exp 2 |\dots| $ with $ \exp |\dots| $ in (3), we still
get splittability (rather than \splittability{C} with the given $C$, of
course). This applies to both cases, (a) and (b).
\end{remark}

\begin{proposition}\label{3.155}
(a)
Let positive numbers $ C, \bal, \de $ satisfy $ \de \le 1 $ and $ \bal >
1+\frac2\de $. Let $ \phi \in L_2(\R) $ satisfy $ \int_\R \phi^2(t) \, \D
t = 1 $ and $ \esssup_{t\in\R} (1+|t|)^\bal \phi^2(t) < \infty $. Let a Lebesgue
measurable function $ \psi : \R \to \R $ satisfy $ \int_\R \exp \frac1C
|\psi(u)| \, \gR(\D u) < \infty $, $ \int_\R \psi(u) \gR(\D u) = 0 $, and
\[
\log \int_\R \exp \bigg| \psi \Big( \frac{x+h+y}{2} \Big) - \psi \Big(
\frac{x+h-y}{2} \Big) \bigg| \, \gR(\D x) \le C|y|^\de
\]
for all $ y,h \in \R $. Then the random process $ \psi(\phi*w) $ is splittable.

(b)
Let positive numbers $ C, \bal, \de $ satisfy $ \de \le 1 $ and $ \bal >
1+\frac2\de $. Let $ \phi \in L_2(\R\to\C) $ satisfy $ \int_\R |\phi(t)|^2 \, \D
t = 1 $ and $ \esssup_{t\in\R} (1+|t|)^\bal |\phi(t)|^2 < \infty $. Let a Lebesgue
measurable function $ \psi : \C \to \R $ satisfy $ \int_\C \exp \frac1C
|\psi(z)| \, \gC(\D z) < \infty $, $ \int_\C \psi(z) \gC(\D z) = 0 $, and
\[
\log \int_\C \exp \bigg| \psi \Big( \frac{x+h+y}{2} \Big) - \psi \Big(
\frac{x+h-y}{2} \Big) \bigg| \, \gC(\D x) \le C|y|^\de
\]
for all $ y,h \in \C $. Then the random process $ \psi(\phi*\wC) $ is splittable.
\end{proposition}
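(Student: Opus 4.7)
The plan is to verify the hypotheses of Lemma~\ref{3.14} in the relaxed form provided by Remark~\ref{3.15a}, thereby obtaining splittability. I describe case~(a); case~(b) is identical up to the trivial replacements $w \rightsquigarrow \wC$, $\gR \rightsquigarrow \gC$, $\phi^2 \rightsquigarrow |\phi|^2$ catalogued in Lemma~\ref{3.14}(b).

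The ``small'' conditions are disposed of first. Condition~(1) of Lemma~\ref{3.14}, in the ``$<\infty$'' form allowed by Remark~\ref{3.15a}, is literally one of the hypotheses. For Condition~(3), in its relaxed form (with $\exp|\cdot|$ in place of $\exp 2|\cdot|$), the substitution $x' = x+h$ inside the integral rewrites the smeared H\"older hypothesis of the proposition as $\sup_h \log \int_\R \exp |\psi(\frac{x+y}2) - \psi(\frac{x-y}2)| \, \gR(\D(x-h)) \le C|y|^\de$, so the function $g$ furnished by~(3) satisfies $g(y) \le C|y|^\de$.

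The substantive step is the verification of~(2). Fix $M$ large enough for Lemma~\ref{3.11} to apply, and set $X_k = G^+_{(\theta+k)M} - G^0_{(\theta+k)M}$. As in the proof of Prop.~\ref{2.4}, $X_k$ is centered Gaussian with
\[
\si_k^2 \;:=\; \Ex X_k^2 \;=\; 2 \int_{(\theta+k)M}^\infty \!\! \phi^2(u) \, \D u \;\le\; \const \cdot (1+(\theta+k)M)^{-(\bal-1)}
\]
by the decay of $\phi$. The hypothesis $\bal > 1+\tfrac2\de$ is precisely $\de(\bal-1)/2>1$, so $S := \sum_{k=0}^\infty \si_k^\de$ is finite, uniformly in $\theta\in[0,1]$. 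The main obstacle is to control the exponential moment of $C \sum_k \si_k^\de |X_k/\si_k|^\de$ in the presence of joint Gaussian dependence among the $X_k$; this is dissolved by convexity of $\exp$ applied with the probability weights $p_k = \si_k^\de/S$, which yields
\[
\exp\Bigl( C \textstyle\sum_k \si_k^\de \bigl|X_k/\si_k\bigr|^\de \Bigr) \;\le\; \sum_k p_k \exp\bigl( CS\,|X_k/\si_k|^\de \bigr),
\]
reducing the problem to a single univariate Gaussian moment. Since each $X_k/\si_k$ is standard Gaussian and $\de \le 1$ gives $|u|^\de \le 1+|u|$, a direct computation bounds $\Ex\exp(CS|X_k/\si_k|^\de)$ by an absolute constant, so the left-hand side of the relaxed~(2) is finite. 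The case $\al = -1$ is symmetric, and Remark~\ref{3.15a} then delivers splittability.
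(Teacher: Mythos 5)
Your argument is correct and takes the same route as the paper's: reduce via Lemma~\ref{3.14} and Remark~\ref{3.15a} to the relaxed Condition~(2), observe $g(y) \le C|y|^\de$, and handle the exponential moment of $\sum_k |X_k|^\de$ by Jensen's inequality with weights proportional to $\si_k^\de$. The paper performs this last step by citing Lemma~\ref{3.2.6} applied to the counting measure on $\{0,1,2,\dots\}$ (per Remark~\ref{3.2.7}), while you re-derive that lemma's discrete case inline; the substance is identical.
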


We need the following generalization of Lemma~\ref{2.6}.

\begin{lemma}\label{3.2.6}
The inequality
\[
\Ex \exp \int_A |G_t|^\de \, \D t \le \int_\R \exp(( C|u|^\de ) \, \gR(\D u) \le
\infty\, ,
\]
where $ C = \int_A ( \Ex G_t^2 )^{\de/2} \, \D t $, holds for every $ \de \in
(0,2) $, every Lebesgue measurable set $ A \subset \R $ and every centered
Gaussian process $G$ on $A$ whose covariation function $ (s,t) \mapsto \Ex G_s
G_t $ is Lebesgue measurable on $ A \times A $.
\end{lemma}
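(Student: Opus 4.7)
The plan is to adapt the proof of Lemma~\ref{2.6}, replacing Lebesgue measure on $A$ by the probability measure $\mu(\D t) = \si_t^\de \, \D t / C$, where $\si_t = \sqrt{\Ex G_t^2}$. WLOG $0 < C < \infty$: the case $C = \infty$ makes the inequality void, and $C = 0$ forces $\si_t = 0$ for a.e.\ $t \in A$, so by Fubini $\int_A |G_t|^\de \, \D t = 0$ a.s. After discarding the null set $\{t \in A : \si_t = 0\}$, which cannot contribute to either side, we may assume $\si_t > 0$ throughout $A$.

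The key identity is
\[
\int_A |G_t|^\de \, \D t = C \int_A \frac{|G_t|^\de}{\si_t^\de} \, \mu(\D t),
\]
which rewrites the integral as $C$ times an average against the probability measure $\mu$. By convexity of $\exp$ and Jensen's inequality,
\[
\exp \int_A |G_t|^\de \, \D t \le \int_A \exp \Big( C \frac{|G_t|^\de}{\si_t^\de} \Big) \, \mu(\D t).
\]
Taking expectations, swapping by Fubini (justified by the measurability hypothesis on the covariance and the standing separability convention of Remark~\ref{2.7}), and observing that $G_t/\si_t$ is standard normal for each $t$, we get
\[
\Ex \exp \Big( C \Big|\frac{G_t}{\si_t}\Big|^\de \Big) = \int_\R \exp ( C |u|^\de ) \, \gR(\D u),
\]
a quantity independent of $t$. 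Integrating this constant against the probability measure $\mu$ yields exactly $\int_\R \exp(C|u|^\de) \, \gR(\D u)$, which is the claimed bound.

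The calculation is essentially the same as for Lemma~\ref{2.6}, with the change of measure from $\si_t \, \D t / C$ to $\si_t^\de \, \D t / C$ being the only real novelty: this is the natural choice because it cancels the factor $\si_t^\de$ produced when normalising $|G_t|^\de$ by the standard deviation. Note that no convexity of $u \mapsto |u|^\de$ is needed (and indeed fails for $\de < 1$), only the convexity of $\exp$. Unlike in Lemma~\ref{2.6}, no explicit closed form for the Gaussian integral is available, so we simply leave it as $\int_\R \exp(C|u|^\de) \, \gR(\D u)$; the restriction $\de < 2$ is what guarantees this quantity is finite when $C$ is finite. The only routine obstacle is the joint measurability needed for Fubini, which is handled exactly as in Lemma~\ref{2.6}.
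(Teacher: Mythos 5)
Your proof is correct and follows essentially the same route as the paper: replace the density $\si_t\,\D t/C$ used in Lemma~\ref{2.6} by $\si_t^\de\,\D t/C$, apply Jensen with the convexity of $\exp$, and reduce to the standard Gaussian integral $\int_\R\exp(C|u|^\de)\,\gR(\D u)$. The paper compresses the edge-case bookkeeping (the $C=\infty$ void case is already flagged by the ``$\le\infty$'' convention of Remark~\ref{2.7}/\ref{3.2.7}, and $\si_t=0$ points are handled implicitly), but the substantive argument is identical.
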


\begin{remark}\label{3.2.7}
Remark \ref{2.7} applies, still. That is, we assume that the Hilbert space $
L_2(\Om) $ is separable. This lemma generalizes readily to $ A \subset \R^d $,
and moreover, to arbitrary measure space $A$.
\end{remark}

\begin{proof}[Proof of Lemma \ref{3.2.6}]\let\qed\relax
Similarly to the proof of Lemma \ref{2.6}, denoting $ \si_t = \sqrt{ \Ex G_t^2 }
$ we have $ \int_A \si_t^\de \, \D t = C $ and
\[
\exp \int_A |G_t|^\de \, \D t = \exp \bigg(
C \int_A \frac{|G_t|^\de}{\si_t^\de} \frac{\si_t^\de}C \, \D t \bigg) \le \int_A
\Big( \exp C \frac{|G_t|^\de}{\si_t^\de} \Big) \frac{\si_t^\de}C \, \D t \, ;
\]
thus,
\[
\Ex \exp \int_A |G_t|^\de \, \D t \le \int_A \Big( \Ex \exp
C \frac{|G_t|^\de}{\si_t^\de} \Big) \frac{\si_t^\de}C \, \D t = \int_\R
\exp(C|u|^\de) \, \gR(\D u) \, . \qquad \rlap{$\qedsymbol$}
\]
\end{proof}

\begin{sloppypar}
\begin{proof}[Proof of proposition~\ref{3.155}]
\textsc{The real-valued case} (a) According to Lemma~\ref{3.14} and
Remark~\ref{3.15a}, it is sufficient to prove Condition (2) of Lemma~\ref{3.14}
with ``$<\infty$'' instead of ``$\le2$''. Having $ g(y) \le C|y|^\de $, we check
that $ \int_0^1 \Ex \exp C \sum_{k=0}^\infty | G^+_{(\theta+k)C} -
G^0_{(\theta+k)C} |^\de \, \D\theta < \infty $ (the other case, $ G^- - G^0 $,
being similar). Applying Lemma~\ref{3.2.6} to the countable
measure space $ \{0,1,2,\dots\} $ with the counting measure (according to
Remark~\ref{3.2.7}) we reduce it to $ \esssup_{\theta \in (0,1)} C_\theta <
\infty $, where $
C_\theta = \sum_{k=0}^\infty \( (\Ex(G^+_{(\theta+k)C} - G^0_{(\theta+k)C})^2
\)^{\de/2} $. Similarly to the proof of Prop.~\ref{2.4}(c) we have $ \Ex ( G^+_t
- G^0_t )^2 = 2\int_t^\infty \phi^2(s) \, \D s \le \const \cdot
(1+t)^{-(\bal-1)} $ with a constant dependent only on $ \bal $ and the given $
\esssup $. Thus, $ C_\theta \le \sqrt{\const} \sum_{k=0}^\infty
(1+kC)^{-\frac12(\bal-1)\de} < \infty $, since $ \bal > 1+\frac2\de $.

\textsc{The complex-valued case} (b) needs only few trivial modifications: $ G =
\phi * \wC $; $ \gC $ instead of $ \gR $; $ \psi_2 : \C \to \R $; and $
|\dots|^2 $ rather than $ (\dots)^2 $.
\end{proof}
\end{sloppypar}

\begin{proposition}\label{3.19}
Let $ \phi \in L_2(\R\to\C) $ satisfy $ \int_\R |\phi(t)|^2 \, \D t = 1 $ and $
\esssup_{t\in\R} (1+|t|)^\bal |\phi(t)|^2 < \infty $ for some $ \bal > 3 $. Then
the following random process $X$ is splittable:
\[
X = \log | \phi * \wC | + \gEuler/2 \, , \quad \text{that is,} \quad X_t = \log
\bigg| \int_\R \phi(t-s) \wC(s) \, \D s \bigg| + \frac12 \gEuler \, .
\]
\end{proposition}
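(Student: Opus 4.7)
The plan is to derive Proposition~\ref{3.19} from Proposition~\ref{3.155}(b) applied with $\psi(z) = \log|z| + \gEuler/2$ and exponent $\de = 1$. The hypothesis $\bal > 3$ is exactly $\bal > 1 + 2/\de$, so the task reduces to verifying the three conditions on $\psi$: exponential integrability against $\gC$, the centering $\int_\C \psi(z)\,\gC(\D z) = 0$, and the smeared H\"older inequality of Prop.~\ref{3.155}(b) with $\de = 1$.

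Exponential integrability and centering are routine. For $C$ large, $\exp(C^{-1}|\psi(z)|)$ is dominated by a constant multiple of $|z|^{-1/C} + |z|^{1/C}$; near $z=0$, polar coordinates give the integrable $r\cdot r^{-1/C}\,\D r$, while at infinity the factor $\E^{-|z|^2}$ dominates any power. For the centering, polar coordinates together with the substitution $u = r^2$ and the standard identity $\int_0^\infty (\log u)\,\E^{-u}\,\D u = -\gEuler$ (from $\Ga'(1) = -\gEuler$) give $\int_\C \log|z|\,\gC(\D z) = -\gEuler/2$, so adding $\gEuler/2$ makes the mean zero.

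The heart of the argument is the smeared Lipschitz bound. Substituting $x = 2u$ in the integral transforms $\gC(\D x) = \pi^{-1}\E^{-|x|^2}\,\D x$ into $4\pi^{-1}\E^{-4|u|^2}\,\D u$, a rotation-invariant Gaussian measure $\tilde\mu$ on $\C = \R^2$ (corresponding to $\si^2 = 1/8$), while the arguments of $\psi$ become $u + h' \pm y'$ with $h' = h/2$ and $y' = y/2$. Since the additive constant $\gEuler/2$ cancels in the difference, the shifted measure trick from the text before Lemma~\ref{3.5} puts the integrand into the form $\int_\C \exp|\psi(w+y')-\psi(w-y')|\,\tilde\mu_{h'}(\D w)$ with $\psi(z)=\log|z|$. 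Lemma~\ref{3.15} applies directly, since $p = 2 \ge 2$ and $\tilde\mu$ is a nontrivial rotation-invariant Gaussian measure, and yields a bound $C'|y'| = (C'/2)|y|$, i.e.\ exactly the linear bound required by Prop.~\ref{3.155}(b) with $\de = 1$.

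The genuinely delicate point — the smeared estimate for the singular function $\log|\cdot|$ — is already absorbed into Lemma~\ref{3.15}, so for this proposition nothing more is needed beyond the rescaling of variables that fits the condition of Prop.~\ref{3.155}(b) into the form of Lemma~\ref{3.15}, plus the two routine integrals above. The main care is only in tracking the harmless scale factor $1/2$ introduced by the substitution and in noting that the constant shift $\gEuler/2$ is invisible to the smeared Lipschitz condition.
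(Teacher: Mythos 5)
Your proposal is correct and follows essentially the same route as the paper: reduce to Proposition~\ref{3.155}(b) with $\de=1$ and $\psi(z)=\log|z|+\gEuler/2$, check the two routine integral conditions by polar coordinates and the identity $\int_0^\infty(\log u)\,\E^{-u}\,\D u=-\gEuler$, and verify the smeared Lipschitz bound via Lemma~\ref{3.15}. The only (cosmetic) difference is in the last step: you perform the substitution $x=2u$ and land on the Gaussian with $\si^2=1/8$, whereas the paper applies Lemma~\ref{3.15} directly to $\gC$ (so $\si=1/\sqrt2$), implicitly using that for $\psi_0=\log|\cdot|$ the inner factor $\tfrac12$ cancels in the difference $\psi_0((x+h+y)/2)-\psi_0((x+h-y)/2)=\psi_0(x+h+y)-\psi_0(x+h-y)$, so no change of variables is needed. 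Either way the same lemma yields a bound of the form $C|y|$, so your argument is sound.
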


\begin{sloppypar}
\begin{proof}
We use Prop.~\ref{3.155}(b) for $\de=1$ and $ \psi(z) = \log |z| + \frac12
\gEuler $.

The condition $ \int_\C \exp \frac1C |\psi(z)| \, \gC(\D z) < \infty $ is
satisfied for $ C \ge 1 $; indeed, $ \int_\C \exp |\psi(z)| \, \gC(\D z) =
\int_\C \exp \big| \log|z| + \frac12 \gEuler \big| \, \gC(\D z) = \int_0^\infty
\exp \big| \log r + \frac12 \gEuler \big| \cdot \frac1\pi \E^{-r^2} \cdot 2\pi r
\, \D r = \int_0^\infty \exp \big| \frac12 \log s + \frac12 \gEuler \big| \cdot
\E^{-s} \, \D s < \infty $.

The condition $ \int_\C \psi(z) \, \gC(\D z) = 0 $ is satisfied, since $
\int_\C \psi(z) \, \gC(\D z) = \int_\C \( \log |z| + \frac12 \gEuler
\)  \, \gC(\D z) = \int_0^\infty \log r \cdot \frac1\pi \E^{-r^2} \cdot
2\pi r \, \D r + \frac12 \gEuler = \int_0^\infty \frac12 \log s \cdot
\E^{-s} \, \D s + \frac12 \gEuler = 0 $.

The condition $ \log \int_\C \dots \le C|y| $ is ensured for large $C$ by Lemma
\ref{3.15} (for $ p=2 $ and $ \si=1/\sqrt2 $).
\end{proof}
\end{sloppypar}

\begin{remark}\label{3.17}
Let $ \phi \in L_2(\R) $ satisfy $ \int_\R \phi^2(t) \, \D t = 1 $ and $
\esssup_{t\in\R} (1+|t|)^\bal \phi^2(t) < \infty $ for some $ \bal > 5 $. Then
the following random process $X$ is splittable:
\[
X = \log | \phi * w | + \be \, , \quad \text{that is,} \quad X_t = \log
\bigg| \int_\R \phi(t-s) w_s \, \D s \bigg| + \be
\]
where $ \be = -\int_\R \log|u| \, \gR(\D u) $.

This is proved similarly to Prop.~\ref{3.19}, using Lemma~\ref{3.16}, and
Prop.~\ref{3.155}(a) for $\de=1/2$. 
\end{remark}

\section[Dimension two: basics]
  {\raggedright Dimension two: basics}
\label{sect4}
\subsection{Split of split, leak for leak}
\label{4a}

In dimension $2$, regretfully, our technique fails to give results of the form
``if $X$ is splittable, then $F(X)$ is splittable''. Here is why. Splittability
of $ X = (X_{t_1,t_2})_{t_1,t_2\in\R} $ is defined \cite[Def.~1.3]{IV} in two
stages. First, \splittability{(1,1)}; roughly, it requires existence of a split,
good (that is, with small leak) along the line $t_1=0$,
\[
X^0_{t_1,t_2} = X^-_{t_1,t_2} \! - \text{\small leak} \;\;\; \text{for } t_1 < 0
\, , \quad
X^0_{t_1,t_2} = X^+_{t_1,t_2} \! - \text{\small leak} \;\;\; \text{for } t_1 \ge
0 \, ,
\]
and another split, good along the line $ t_2=0 $.
Second, \splittability{1} is \splittability{(1,2)}; roughly, it requires a split
good along the line $t_1=0$ whose leak, being \splittable{(1,1)}, has a split
good along the line $t_2=0$.
As noted in Sect.~\ref{sect2}, a leak of the transformed process $ F(X) $ is not
a transformed leak of $X$. Thus, we cannot deduce \splittability{1} of the
former leak from \splittability{1} of the latter leak.

\begin{remark}
The leak along the line $t_1=0$ is the leak defined
by \cite[Def.~1.2]{IV}. The leak along the line $t_2=0$ is defined similarly
(replace $ t_1<0 $, $ t_1\ge0 $ with $ t_2<0 $, $ t_2\ge0 $ in that definition),
and may be obtained by swapping the two coordinates, taking the leak defined
by \cite[Def.~1.2]{IV}, and swapping the coordinates again.
\end{remark}

In order to handle a split of the leak of another split, we need a split of
another split, introduced below.

A split of a random field $X$, being a triple $(X^0,X^-,X^+)$ of random fields
(rather than a single random field), is not a random \emph{signed} measure, but may be
thought of as a random \emph{vector} measure with values in $\R^3$. Thus we introduce
the space $\VM_2^p$ of all locally finite vector Borel measures on $\R^2$ with
values in $\R^p$. Clearly, $ \VM_2^{p+q} = \VM_2^p \oplus \VM_2^q $. By
a \component{p} random field (on $\R^2$) we mean a measurable map
$ \Om \to \VM_2^p $. The notions ``independent'' and ``identically
distributed'', hence also ``split'', apply to \component{p} random fields. A
split of a \component{1} random field is a \component{3} random
field. Similarly, a split of a \component{p} random field, being a random
measure with values in $ \R^p \oplus \R^p \oplus \R^p = \R^{3p} $, is
a \component{3p} random field.

If $ (X^0,X^-,X^+) $ is a split of a \component{(p+q)} $ X = (Y,Z) $ (where $Y$
is \component{p} and  $Z$ is \component{q}), then $ X^0 = (Y^0,Z^0) $, $ X^- =
(Y^-,Z^-) $, $ X^+ = (Y^+,Z^+) $; necessarily,\footnote{%
 This necessary condition is not sufficient (see the next footnote).}
$ (Y^0,Y^-,Y^+) $ is a split of $Y$, and $ (Z^0,Z^-,Z^+) $ is a split of $Z$.

A split $ (X^0,X^-,X^+) = (X^\al)_{\al\in\{0,-,+\}} = X^\doT $ of
a \component{1} $X$ is a \component{3} random field, and we may consider
its \component{9} split $ (X^{\doT,0},X^{\doT,-},X^{\doT,+}) =
(X^{\doT,\be})_{\be\in\{0,-,+\}} = (X^{\al,\be})_{\al,\be\in\{0,-,+\}} =
X^{\doT,\doT} $.
\[
\begin{gathered}\includegraphics[scale=1]{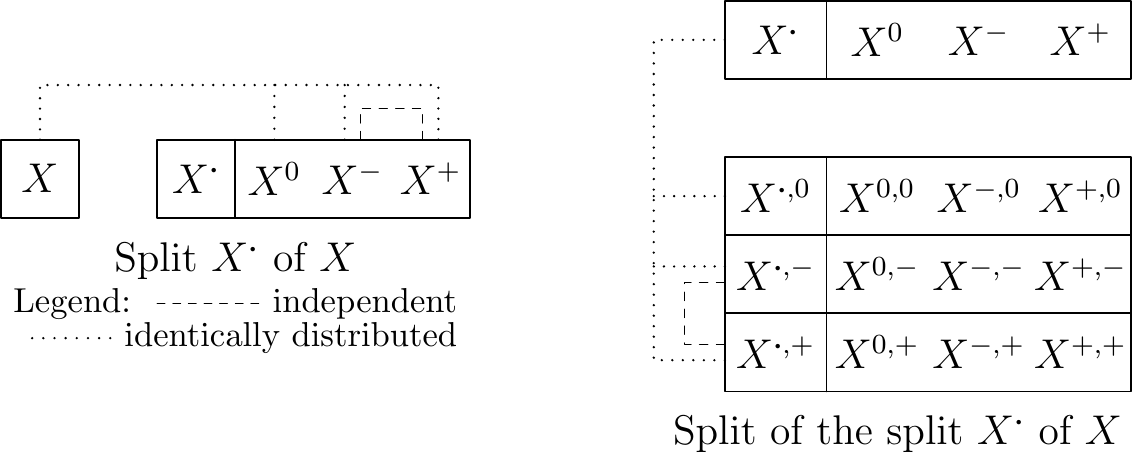}\end{gathered}
\vspace{-2.5pt}
\]
In terms of $ X^\doT $ we may say that $X$ is just something (denote it $X^*$)
distributed like $ X^\al $ for some $\al$, therefore, for all $\al$. Likewise,
in terms of $ X^{\doT,\doT} $ we may say that $X^\doT$ is just something (denote
it $X^{\doT,*}$) distributed like $ X^{\doT,\be} $ for some $\be$, therefore,
for all $\be$.
\enlargethispage{5pt}
\begin{sloppypar}
Necessarily,
$ X^{0,\doT} = (X^{0,0},X^{0,-},X^{0,+}) $ is a split of $ X^0 $,
$ X^{-,\doT} = (X^{-,0},X^{-,-},X^{-,+}) $ is a split of $ X^- $, and
$ X^{+,\doT} = (X^{+,0},X^{+,-},X^{+,+}) $ is a split of $ X^+ $.
Nevertheless, $ (X^{0,\doT}, X^{-,\doT}, X^{+,\doT}) $ need not be a
split.\footnote{%
 For a counterexample take four independent copies $ X^{-,-}, X^{-,+}, X^{+,-},
 X^{+,+} $ of $X$, and let $ X^{0,0} = X^{-,0} = X^{0,-} = X^{-,-} $, $ X^{+,0}
 = X^{0,+} =  X^{-,+} $. Then $ X^{-,\doT} = (X^{-,0},X^{-,-},X^{-,+}) =
 (X^{-,-},X^{-,-},X^{-,+}) $ and $ X^{+,\doT} = (X^{+,0},X^{+,-},X^{+,+}) =
 (X^{-,+},X^{+,-},X^{+,+}) $ are neither independent nor identically
 distributed.}
If it is a split, then we call $ X^{\doT,\doT} $ a \emph{\splt{2}} of
$X$. Every \splt{2} $ X^{\doT,\doT} $ of $X$ gives both the split $
(X^{\doT,0},X^{\doT,-},X^{\doT,+}) $ of the split $X^{\doT,*}$ of $X$, and
another split $ (X^{0,\doT},X^{-,\doT},X^{+,\doT}) $ of another split
$X^{*,\doT}$ of $X=X^{*,*}$.
\[
\begin{gathered}\includegraphics[scale=1]{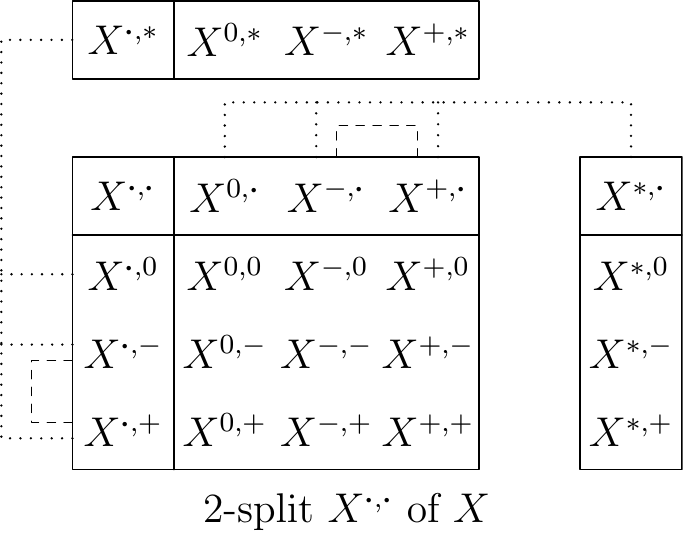}\end{gathered}
\vspace{-2.5pt}
\]
\end{sloppypar}
By a \transformation{p,q} we mean a measurable map $ F : \VM_2^p \to \VM_2^q
$. Given a \component{p} random field $X$ and a \transformation{p,q} $F$, we get
a \component{q} random field $ F(X) $.

If $X$ is a \component{p} random field and $ (X^\al)_{\al\in\{0,-,+\}} $ a split
of $X$, then
\begin{equation}\label{4.2}
\(F(X^\al)\)_{\al\in\{0,-,+\}} \quad \text{is a split of} \quad F(X)
\end{equation}
for arbitrary \transformation{p,q} $F$. (Recall \eqref{2.15}.) Similarly,\footnote{%
 Use the \transformation{3p,3q}  $F^3 : \mu_1 \oplus \mu_2 \oplus \mu_3 \mapsto
 F(\mu_1) \oplus F(\mu_2) \oplus F(\mu_3) $.}
if $ (X^{\al,\be})_{\al,\be\in\{0,-,+\}} $ is a \splt{2}
of $X$, then
\begin{equation}\label{4.25}
\(F(X^{\al,\be})\)_{\al,\be\in\{0,-,+\}} \quad \text{is a \splt{2} of} \quad
F(X) \, .
\end{equation}

The leak (along the line $t_i=0$ for a given $ i \in \{1,2\} $) of a split
$(X^0,X^-,X^+)$ of a \component{1} $X$ is $ \Leak_i(X^0,X^-,X^+) $, where
$ \Leak_1 $ is such a \transformation{3,1}:
\[
\nu = \Leak_1 (\mu^0, \mu^-, \mu^+) \equiv \left\{
\begin{gathered}
\forall A\subset(-\infty,0)\times\R \quad \nu(A)=\mu^-(A) - \mu^0(A) \,
; \\
\forall A\subset[0,\infty)\times\R \quad \nu(A)=\mu^+(A) - \mu^0(A)
\end{gathered}
\right.
\]
(recall \cite[Def.~1.2]{IV}), and $ \Leak_2 $ is defined similarly (using
$\R\times(-\infty,0)$ and $\R\times[0,\infty)$). Less formally, $ \Leak_i (X^0,
X^-, X^+) : (t_1,t_2) \mapsto X^{\sgn t_i}_{t_1,t_2} - X^0_{t_1,t_2} $.

Generally, given a split $(X^0,X^-,X^+)$ of $X$ and a transformation $F$, we get
a split $\(F(X^0),F(X^-),F(X^+)\)$ of $ F(X) $; its leak
$\Leak_i\(F(X^0),F(X^-),F(X^+)\)$ need not be equal to $
F\(\Leak_i(X^0,X^-,X^+)\) $ (unless $F$ is just multiplication by a given
function).

\begin{sloppypar}
The same applies to a \component{p} $X$ and a \transformation{p,q} $F$. In
particular, it applies to the \transformation{3,1} $ \Leak_1 $ and the split $
(X^{\doT,0},X^{\doT,-},X^{\doT,+}) = (X^{\doT,\be})_\be $ of the \component{3}
random field $ X^{\doT,*} $:
\[
\( \Leak_1 ( X^{\doT,\be} ) \)_\be \quad \text{is a split of} \quad X^{(1)}
\]
whenever $ X^{\doT,\doT} $ is a \splt{2} of $X$; here $ X^{(1)} =
\Leak_1(X^{\doT,*}) $, that is, $ X^{(1)}_{t_1,t_2} = X^{\sgn t_1,*}_{t_1,t_2} -
X^{0,*}_{t_1,t_2} $.
\end{sloppypar}

According to \cite[Def.~1.2(b)]{IV}, the leak of a split of $X$ is called a leak
\emph{for} $X$. In terms of this ``leak for'' relation we see that, first,
$ X^{(1)} $ is a leak for $X$, and second,
\[
X^{(1,2)} = \Leak_2 \Big( \( \Leak_1 ( X^{\doT,\be} ) \)_\be \Big) \quad
\text{is a leak for the leak } X^{(1)} \text{ for } X \, .
\]
Less formally, $ X^{(1,2)}_{t_1,t_2} = X^{\sgn t_1,\sgn t_2}_{t_1,t_2} -
X^{0,\sgn t_2}_{t_1,t_2} - X^{\sgn t_1,0}_{t_1,t_2} + X^{0,0}_{t_1,t_2} $.
Similarly, we may apply $ \Leak_2 $ to the split $ (X^{\al,\doT})_\al $ of the
split $ X^{*,\doT} $ of $X$, and get
\[
X^{(2,1)} = \Leak_1 \Big( \( \Leak_2 ( X^{\al,\doT} ) \)_\al \Big) \quad
\text{is a leak for the leak } X^{(2)} \text{ for } X,
\]
where $ X^{(2)} = \Leak_2(X^{*,\doT}) $. We note that $ X^{(2,1)} = X^{(1,2)} $,
and call this random field the \leak{2} of the given \splt{2}. So, we arrive at
a leak for a leak, provided that a split of a split is given.

\subsection{Sufficient conditions}
\label{sect3b}

\begin{proposition}\label{4.4}
Let $ X^{\doT,\doT} $ be a \splt{2} of a random field $ X $ on $ \R^2 $. Then
the following $8=3+4+1$ conditions (together; on $X$ and $X^{(1)}$, $X^{(2)}$,
$X^{(1,2)}$ introduced above) are sufficient for \splittability{C} of $X$:
\begin{description}
\item[\rm(s1)] $X$ is stationary; that is, the distribution of its shift, the random field
$ (X_{t_1+s_1,t_2+s_2})_{t_1,t_2} $, does not depend on $ s_1, s_2 \in  \R $;
\item[\rm(s2)] the distribution of the random field $ (X^{(1)}_{t_1,t_2+s})_{t_1,t_2} $
does not depend on $ s \in  \R $;
\item[\rm(s3)] the distribution of the random field $ (X^{(2)}_{t_1+s,t_2})_{t_1,t_2} $
does not depend on $ s \in  \R $;
\end{description}
\begin{gather*}
\tag{a1} \Ex \exp \frac1C \int_{[0,1)\times[0,1)} |X_t| \, \D t \le 2 \, ; \\
\tag{a2} \Ex \exp \frac1C \int_{\R\times[0,1)} |X_t^{(1)}| \, \D t \le 2 \, ; \\
\tag{a3} \Ex \exp \frac1C \int_{[0,1)\times\R} |X_t^{(2)}| \, \D t \le 2 \, ; \\
\tag{a4} \Ex \exp \frac1C \int_{\R\times\R} |X_t^{(1,2)}| \, \D t \le 2 \, ; \\
\tag{b} \Ex X_t = 0 \quad \text{for all } t \in \R^2 \, .
\end{gather*}
\end{proposition}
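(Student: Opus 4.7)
The plan is to verify the hierarchical splittability definition of \cite[Def.~1.3]{IV} directly, exploiting the given \splt{2}. By rescaling we may assume $C=1$, and the proof then splits into checking the three natural analogues of Def.~\ref{2.spli}(a,b,c) at each level of the hierarchy.

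First, for the top-level integrability requirement: condition (a1) combined with (s1) transfers to every unit square $[s_1,s_1+1)\times[s_2,s_2+1)$, giving the needed $\Ex\exp\int|X_t|\,\D t\le2$ bound on any such square. Mean-zero is immediate from (b). For the ``good split along the line $t_1=0$'' requirement we take the first-level split $X^{\doT,*}=(X^{\al,*})_{\al\in\{0,-,+\}}$, whose leak is by construction $X^{(1)}=\Leak_1(X^{\doT,*})$. Condition (a2), together with the shift-invariance (s2), supplies the exponential bound on $\int|X^{(1)}|$ over any horizontal strip, which is what is needed for the first-level leak; and (s1) together with \eqref{4.2} ensures the same split structure is available after shifting $X$.

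The crucial second stage is that the leak $X^{(1)}$ must itself be splittable (as a random field, good along the line $t_2=0$). This is precisely where the full \splt{2} assumption is used: the split $(X^{\doT,\be})_{\be\in\{0,-,+\}}$ of the \component{3} field $X^{\doT,*}$ is valid by definition of \splt{2}, and applying the \transformation{3,1} $\Leak_1$ componentwise yields, via \eqref{4.2}, the split $(\Leak_1(X^{\doT,\be}))_\be=(X^{(1),\be})_\be$ of $X^{(1)}$, whose leak is $X^{(1,2)}$. Condition (a4) is exactly the exponential bound on the \leak{2} needed to close the induction at this inner level, and (a1) again controls the inner ``unit square'' integrability when specialized to $X^{(1)}$ via (s2). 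The symmetric requirement (split good along $t_2=0$ whose leak is splittable) is handled identically by the companion first-level split $(X^{*,\be})_\be$ with leak $X^{(2)}$: (a3) and (s3) bound $X^{(2)}$, and the identification $X^{(2,1)}=X^{(1,2)}$ noted in Sect.~\ref{4a} lets us reuse (a4) at the second stage.

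The main obstacle I expect is bookkeeping rather than a genuinely hard estimate: every appearance of a shifted copy of $X$, $X^{(1)}$ or $X^{(2)}$ in the hierarchical definition must be supplied with a split having the same exponential bound, and the conditions (s1)--(s3) have been crafted exactly so that this works uniformly. The secondary worry is that one must confirm at each step that the objects constructed really are splits of the right fields, i.e.\ that the required independence and equidistribution hold; this is where \eqref{4.2}--\eqref{4.25} carry the weight, and is the reason the \splt{2} assumption (rather than merely iterated single splits) is needed.
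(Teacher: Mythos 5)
Your proposal follows essentially the same plan as the paper's proof: use the \splt{2} to obtain a split of each first-level leak, invoke (a4) for the innermost exponential bound, (a2)--(a3) for the intermediate ones and (a1) at the top, and discard shifts WLOG via (s1)--(s3). The one slip worth fixing is the clause ``(a1) again controls the inner `unit square' integrability when specialized to $X^{(1)}$'': there is no unit-square requirement at the inner level, and (a1) concerns $X$, not $X^{(1)}$; the integrability of the coordinate-swapped $X^{(1)}$ needed for its \splittability{(1,1)} is the strip condition, which you had already correctly supplied via (a2) together with (s2).
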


\begin{proof}
WLOG, $ C = 1 $. First we check that the random field
$(X^{(1)}_{t_2,t_1})_{t_1,t_2} $ is \splittable{(1,1)}. We inspect
\cite[Def.~1.3(a,b,c)]{IV} for $d=2$, $k=1$. Item (a) there follows from (a2)
and (s2) here. Item (b) there follows from (b) here, since a leak of a centered
random field is centered (evidently). Item (c) there requires a leak $ \ti Y $ for
$(X^{(1)}_{t_2,t_1+r})_{t_1,t_2} $ such that $ \Ex \exp \int_{\R^2} |\ti Y_t| \,
\D t \le 2 $. WLOG, $r=0$ due to (s2). We take $ \ti Y_{t_1,t_2} =
X_{t_2,t_1}^{(1,2)} $ and apply (a4).

Second, similarly, $ X^{(2)} $ is \splittable{(1,1)}.\footnote{%
 Similarly but a bit simpler; this time we do not need to swap $t_1,t_2$; $ \ti
 Y = X^{(2,1)} $ is a leak for $ (X_{t_1+r,t_2}^{(2)})_{t_1,t_2} $ irrespective
 of $r$.}

Finally, we check that $X$ is \splittable{(1,2)}, inspecting
\cite[Def.~1.3(a,b,c)]{IV} for $d=2$, $k=2$. Item (a) there follows from (a1)
and (s1) here. Item (b) there follows from (b) here. Item (c) there stipulates
two cases, $i=1$ and $i=2$.

Case $i=1$ requires a leak $ \ti Y $ for $(X_{t_1+r,t_2})_{t_1,t_2} $ such that
$ (\ti Y_{t_2,t_1})_{t_1,t_2} $ is \splittable{(1,1)}. WLOG, $r=0$ due to
(s1); we take $ \ti Y = X^{(1)} $.

Case $i=2$ requires a leak $ \ti Y $ for $(X_{t_2,t_1+r})_{t_1,t_2} $ such that
$ (\ti Y_{t_2,t_1})_{t_1,t_2} $ is \splittable{(1,1)}. WLOG, $r=0$ due to (s1);
we take $ \ti Y_{t_1,t_2} = X_{t_2,t_1}^{(2)} $.
\end{proof}

Similarly to Prop.~\ref{2.3} we have the following.

\begin{proposition}\label{4.5a}
Let $ X = (X_t)_{t\in\R^2} $ be a random field such that, almost
surely, the measure $ B \mapsto \int_B X_t \, \D t $ is absolutely continuous,
and $ Y_t = \psi(X_t) - \Ex \psi(X_t) $ for a given function $ \psi : \R \to \R
$ that satisfies the Lipschitz condition with a constant $ C_1 $.

(a) If $ X^\doT = (X^0,X^-,X^+) $ is a split of $X$ whose leak $ X^{(1)} =
\Leak_1(X^\doT) $ satisfies Condition (a2) of Prop.~\ref{4.4} with a constant $
C_2 $, then the leak $ Y^{(1)} = \Leak_1(Y^\doT) $, where $ Y^\al_t =
\psi(X^\al_t) - \Ex \psi(X^\al_t) $, satisfies Condition (a2) of Prop.~\ref{4.4}
with the constant $ C_1 C_2 $.

(b) The same holds for $ Y^{(2)} = \Leak_2(Y^\doT) $ and Condition (a3) of
Prop.~\ref{4.4}.

(c) If $ X^{\doT,\doT} $ is a \splt{2} of $X$ whose leaks $ X^{(1)}, X^{(2)} $
satisfy Conditions (a2), (a3) of Prop.~\ref{4.4} with a constant $ C_2 $, and $
Y^{\doT,\doT} $ is the corresponding \splt{2} of $ \psi(X) - \Ex \psi(X) $,
that is, $ Y^{\al,\be}_t = \psi(X^{\al,\be}_t) - \Ex \psi(X^{\al,\be}_t) $, then
the leaks $ Y^{(1)}, Y^{(2)} $ satisfy Conditions (a2), (a3) of Prop.~\ref{4.4}
with the constant $ C_1 C_2 $.
\end{proposition}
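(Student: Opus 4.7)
The plan is to imitate, step for step, the proof of Proposition~\ref{2.3}, exploiting that the centering $\Ex\psi(X^\al_t)$ is the same for every $\al$ and hence cancels in every leak. As in that earlier proof, I will reduce to $C_1=1$ (rescaling $\psi$) and write $m_t = \Ex\psi(X_t)$; since all the component fields $X^\al$ (respectively $X^{\al,\be}$) are distributed like $X$, we have $\Ex\psi(X^\al_t) = m_t$ (resp.\ $\Ex\psi(X^{\al,\be}_t) = m_t$), so $Y^\al_t = \psi(X^\al_t) - m_t$ and $Y^{\al,\be}_t = \psi(X^{\al,\be}_t) - m_t$ with a uniform centering.

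For (a), the cancellation of the $m_t$ in
\[
Y^{(1)}_t = Y^{\sgn t_1}_t - Y^0_t = \psi(X^{\sgn t_1}_t) - \psi(X^0_t)
\]
combined with the Lipschitz bound gives $|Y^{(1)}_t| \le C_1 |X^{\sgn t_1}_t - X^0_t| = C_1 |X^{(1)}_t|$ pointwise almost surely in both $\om$ and $t$ (this is where absolute continuity of the measure $B \mapsto \int_B X_t\,\D t$ enters, ensuring that the corresponding measure for each $X^\al$ is absolutely continuous and that $\psi(X^\al)$ and $Y^\al$ have densities $\psi(X^\al_t)$ and $\psi(X^\al_t)-m_t$ respectively). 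Therefore
\[
\Ex \exp \frac{1}{C_1 C_2} \int_{\R\times[0,1)} |Y^{(1)}_t|\,\D t \le \Ex \exp \frac{1}{C_2}\int_{\R\times[0,1)} |X^{(1)}_t|\,\D t \le 2
\]
by hypothesis (a2) for $X^{(1)}$. This is exactly Condition (a2) of Prop.~\ref{4.4} for $Y^{(1)}$ with constant $C_1 C_2$. Part (b) is identical after swapping the two coordinates and using (a3) for $X^{(2)}$.

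For (c), the 2-split $X^{\doT,\doT}$ supplies two splits of $X$, namely $X^{\doT,*}$ (with $\Leak_1$ equal to $X^{(1)}$) and $X^{*,\doT}$ (with $\Leak_2$ equal to $X^{(2)}$). Applying the transformation $\psi$ componentwise produces the corresponding 2-split $Y^{\doT,\doT}$ of $\psi(X)-\Ex\psi(X)$, and the same uniform centering makes $Y^{\doT,*}$ and $Y^{*,\doT}$ splits of $Y$ whose leaks are $Y^{(1)}$ and $Y^{(2)}$. Part (c) then follows by applying (a) to $X^{\doT,*}$ and (b) to $X^{*,\doT}$; no new estimate is required.

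I do not foresee any real obstacle: the only point demanding care is verifying that the pointwise inequality $|Y^{(1)}_t| \le C_1|X^{(1)}_t|$ holds as an inequality of densities of absolutely continuous measures (handled by the absolute-continuity hypothesis as above), and that the transformation of splits in \eqref{4.2}--\eqref{4.25} is correctly threaded through the centering, which is automatic because $m_t$ depends only on $t$, not on $\al$ or $\be$.
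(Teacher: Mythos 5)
Your proof is correct and takes essentially the same approach as the paper: observe that $\Ex\psi(X^\al_t)$ is independent of $\al$ (since each $X^\al$ is distributed like $X$), so the centering cancels in the leak, then apply the Lipschitz bound pointwise to get $|Y^{(1)}_t|\le C_1|X^{(1)}_t|$, and plug into Condition (a2). Parts (b) and (c) then follow by coordinate-swapping and by applying (a), (b), exactly as you say.
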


\begin{sloppypar}
\begin{proof}
Item (c) follows immediately from (a), (b); and (b) is similar to (a). We have
to prove (a).

We have $ |Y^{(1)}_{t_1,t_2}| = | Y^{\sgn t_1}_{t_1,t_2} - Y^0_{t_1,t_2} | = |
\psi(X^{\sgn t_1}_{t_1,t_2}) - \psi(X^0_{t_1,t_2}) | \le C_1 | X^{\sgn
t_1}_{t_1,t_2} - X^0_{t_1,t_2} | = C_1 |X^{(1)}_{t_1,t_2}| $ (since $ \Ex
\psi(X^\al_t) $ does not depend on $\al$); thus, $ \Ex \exp \frac1{C_1 C_2}
\int_{\R\times[0,1)} |Y^{(1)}_t| \, \D t \le \Ex \exp \frac1{C_2}
\int_{\R\times[0,1)} |X^{(1)}_t| \, \D t \le 2 $.
\end{proof}
\end{sloppypar}

\begin{remark}\label{4.6a}
This approach does not scale to $ Y^{(1,2)} $ and Condition (a4) of
Prop.~\ref{4.4}. The obstacle is that $ | \psi(a)-\psi(b)-\psi(c)+\psi(d) | $
need not be less than $ C_1 |a-b-c+d| $ unless $\psi$ is linear (affine). Rather,
$ | \psi(a)-\psi(b)-\psi(c)+\psi(d) | \le C_1 \min ( |a-b| + |c-d|, \, |a-c| +
|b-d| ) $, which will be used in Section \ref{sect5}. 
\end{remark}

\subsection{Poisson and Gaussian random fields}
\label{4c}

As was noted in Sect.~\ref{sect2} (before Def.~\ref{2.spli}), the centered Poisson
point process has a leakless split, due to independent restrictions to
$(-\infty,0)$ (the past) and $[0,\infty)$ (the future). Similarly, the centered
Poisson random field $X$ on $\R^2$, having independent restrictions to
$(-\infty,0)\times\R$ and $[0,\infty)\times\R$, has a leakless (along the line
$t_1=0$) split
\begin{equation}\label{***}
X^0_{t_1,t_2} = X^-_{t_1,t_2} \text{ for } t_1<0 \, , \qquad X^0_{t_1,t_2} =
X^+_{t_1,t_2} \text{ for } t_1 \ge 0 \, ,
\end{equation}
where $ X^-, X^+ $ are independent copies of $X$. That is, $ X^0_{t_1,t_2} =
X^{\sgn t_1}_{t_1,t_2} $.
Further, having independent restrictions to $\R\times(-\infty,0)$ and
$\R\times[0,\infty)$, the \component{3} random field $ X^\doT = (X^0,X^-,X^+) $
has a split leakless along the line $t_2=0$:
\[
X^{\doT,-}, X^{\doT,+} \text{ are independent copies of } X^\doT, \text{ and
} X^{\doT,0}_{t_1,t_2} = X^{\doT,\sgn t_2}_{t_1,t_2} \, .
\]
That is, $ X^{\doT,-} = (X^{0,-},X^{-,-},X^{+,-}) $, $ X^{\doT,+} =
(X^{0,+},X^{-,+},X^{+,+}) $; and if, for example, $ t_1\ge0 $, $ t_2<0 $, then
$ X^{\doT,0}_{t_1,t_2} = X^{\doT,-}_{t_1,t_2} =
(X^{0,-}_{t_1,t_2},X^{-,-}_{t_1,t_2},X^{+,-}_{t_1,t_2}) =
(X^{+,-}_{t_1,t_2},X^{-,-}_{t_1,t_2},X^{+,-}_{t_1,t_2}) $. 
More generally, introducing the ``first non-zero'' function
\[
\fnz (a,b) = a \text{ when }  a\ne0, \text{ and } b \text{ when } a=0 \, ,
\]
we have $ X^{\doT,\be} = (X^{0,\be},X^{-,\be},X^{+,\be}) $ where
\begin{equation}\label{4.3}
X^{\al,\be}_{t_1,t_2} = X^{\fnz(\al,\sgn t_1),\fnz(\be,\sgn
t_2)}_{t_1,t_2} \qquad \text{for } \al,\be \in \{-1,0,+1\} \, .
\end{equation}
Note that the four random fields $ X^{\al,\be} $ for $ \al,\be \in \{-1,1\} $
are independent; and all the nine random fields $ X^{\al,\be} $ for
$ \al,\be \in \{-1,0,1\} $ are functions of these four, each of the nine being
distributed as the centered Poisson random field. This way we get a split of a
split.

Alternatively, we may use $t_2$ when splitting $X$ and $t_1$ when splitting the
split of $X$; this leads to a different split of splits $ Y^{\al,\be}_{t_1,t_2}
= X^{\fnz(\al,\sgn t_2),\fnz(\be,\sgn t_1)}_{t_1,t_2} $. Swap of $ X^{-,+} $
and $ X^{+,-} $ preserves joint distributions and sends $ Y^{\al,\be}_{t_1,t_2}
$ to $ X^{\fnz(\be,\sgn t_1),\fnz(\al,\sgn t_2)}_{t_1,t_2} =
X^{\be,\al}_{t_1,t_2} $. We see that the $3\times3$ matrix $
(X^{\be,\al})_{\al,\be\in\{0,-,+\}} $, transposed to $ (X^{\al,\be})_{\al,\be} $,
being distributed like $ (Y^{\al,\be})_{\al,\be} $, is a split of splits. Thus,
$ X^{\doT,\doT} $ is a \splt{2} of $X$.

Arbitrary transformation may be applied by \eqref{4.2}, \eqref{4.25}. We summarize.

\begin{lemma}\label{4.5}
Let $ X^{\al,\be} $ for $ \al,\be \in \{-1,1\} $ be four independent copies of
the centered Poisson random field $X$ on $\R^2$, and five more random fields $
X^{\al,\be} $ for $ (\al,\be) \in \{-1,0,1\}\times\{-1,0,1\} \,\setminus\,
\{-1,1\}\times\{-1,1\} $ be defined by \eqref{4.3}. Let $F$ be
a \transformation{1,1}, and
\[
Y^{\al,\be} = F(X^{\al,\be}) \quad \text{for }
(\al,\be) \in \{-1,0,1\}\times\{-1,0,1\} \, .
\]
Then $ Y^{\doT,\doT} $ is a \splt{2} of $F(X)$.
\end{lemma}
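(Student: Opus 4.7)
The plan is to treat the lemma as a summarizing corollary of the preceding construction plus formula \eqref{4.25}. Essentially all the content sits in the paragraphs just above the lemma statement; my proof would simply make the two required verifications crisp and then invoke \eqref{4.25}.

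First I would establish, as a self-contained claim, that the nine fields $X^{\al,\be}$ defined in the lemma form a 2-split $X^{\doT,\doT}$ of the original Poisson field $X$. This has two sub-steps. The easy sub-step is to check that for each fixed $\be \in \{-1,0,+1\}$, the triple $(X^{0,\be},X^{-,\be},X^{+,\be})$ is a split of $X^{*,\be}$: by \eqref{4.3} the ``0-component'' equals the sign-of-$t_1$ component, and the $\pm$-components are independent because they are built from disjoint halves of the (independent-increment) Poisson field along $t_1$; the three are identically distributed because each is itself a centered Poisson field on $\R^2$. The harder sub-step is to verify that the three splits, indexed by $\be$, are themselves a split, i.e.\ that $(X^{\doT,0},X^{\doT,-},X^{\doT,+})$ is a split of $X^{\doT,*}$. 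Here I would run exactly the swap argument sketched in the excerpt: construct the alternative object $Y^{\al,\be}_{t_1,t_2}=X^{\fnz(\al,\sgn t_2),\fnz(\be,\sgn t_1)}_{t_1,t_2}$ (splitting by $t_2$ first, then by $t_1$), observe that swapping the two independent copies $X^{-,+}$ and $X^{+,-}$ preserves the joint distribution of the four ``corner'' fields, note that this swap carries $Y^{\al,\be}$ onto $X^{\be,\al}$, and conclude that the transpose $(X^{\be,\al})_{\al,\be}$ has the same joint distribution as $Y^{\doT,\doT}$; since $Y^{\doT,\doT}$ is manifestly a split-of-splits (we split $X$ by $t_2$ first and then split each piece by $t_1$), so is its transposed image, which is precisely $X^{\doT,\doT}$. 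That is the definition of a 2-split.

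Once the 2-split property of $X^{\doT,\doT}$ is in hand, the second and final step is to apply \eqref{4.25} to the (1,1)-transformation $F$. That statement says exactly that if $X^{\doT,\doT}$ is a 2-split of $X$, then $(F(X^{\al,\be}))_{\al,\be}$ is a 2-split of $F(X)$; since the left-hand side is $Y^{\doT,\doT}$ by definition, the lemma follows.

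I do not expect any genuine obstacle. The only delicate point is the swap argument verifying the ``split of splits'' property for the Poisson construction, since this is where one must use that the four independent corner copies together determine all nine fields and that their joint law is invariant under the coordinate-swap. Everything else — identical distribution of each corner, independence of the two sides of each axis, and transfer through $F$ — is either built into the Poisson construction \eqref{***}, \eqref{4.3} or is the content of \eqref{4.25}.
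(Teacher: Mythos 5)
Your proposal follows the paper's overall strategy — construct the nine Poisson fields via \eqref{4.3}, verify the $2$-split property with the swap argument, then push through $F$ by \eqref{4.25} — but the ``harder sub-step'' as you have worded and argued it does not establish what you claim, and there is a genuine logical gap at the phrase ``so is its transposed image.''

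Recall the decomposition: $X^{\doT,\doT}$ is a $2$-split precisely when \emph{both} (A) $(X^{\doT,\be})_{\be}$ is a split of $X^{\doT,*}$ (i.e.\ $X^{\doT,\doT}$ is a split of a split) \emph{and} (B) $(X^{\al,\doT})_{\al}$ is a split of $X^{*,\doT}$. You label your hard sub-step as (A). But the swap argument, run exactly as you (and the paper) run it, shows that the transpose $(X^{\be,\al})_{\al,\be}$ is distributed like $Y^{\doT,\doT}$ and hence is a split of a split; unwinding the indices, this is exactly condition (B) for $X^{\doT,\doT}$, not (A). You then write ``so is its transposed image, which is precisely $X^{\doT,\doT}$'' — but a split of a split need not have a transpose that is a split of a split; that is exactly what the paper's footnote counterexample (four independent corners with $X^{0,0}=X^{-,0}=X^{0,-}=X^{-,-}$, $X^{+,0}=X^{0,+}=X^{-,+}$) is there to show. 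So the inference from ``the transpose is a split of a split'' to ``$X^{\doT,\doT}$ is a split of a split'' is a non-sequitur, and condition (A) is left unproved. The paper establishes (A) separately and before the swap argument: the $3$-component field $X^{\doT}=(X^{0},X^{-},X^{+})$ has independent restrictions to $\R\times(-\infty,0)$ and $\R\times[0,\infty)$ (because the four quadrant-restrictions of the independent Poisson copies $X^{-},X^{+}$ are jointly independent), hence admits the leakless split along $t_2=0$ that is encoded in \eqref{4.3}; that is where ``This way we get a split of a split'' comes from. Your ``easy sub-step'' (each $\be$-slice $X^{\doT,\be}$ is, individually, a split of $X$) is a consequence of (A) but does not deliver (A), since it never addresses the independence of $X^{\doT,-}$ and $X^{\doT,+}$ as $3$-component fields. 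To repair the proof, keep the swap argument as the proof of (B), add the direct independent-restrictions argument for (A), and only then conclude with \eqref{4.25}.
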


More generally, instead of the centered Poisson random field one may use a
L\'evy random field, provided that it is a centered random measure whose
variation on a bounded set has some exponential moments. Still more generally,
waiving stationarity, one may use decomposable processes on $\R^2$ (see
Feldman \cite{Fe}). However, Gaussian component does not fit into this
framework because of locally infinite variation. We are especially interested in
the white noise $w$ on $\R^2$. Similarly to Prop.~\ref{2.4} we may adapt $w$ via
the convolution $ \phi * w $ with a given $ \phi \in L_2(\R^2) $; $ \phi * w $
is a stationary Gaussian random field, and we may apply to it a transformation
$F$. But, rather unexpectedly, in some notable cases we obtain a stationary
non-Gaussian random field $ F(X) $ via a nonstationary Gaussian random field
$X$. For this reason we replace $ \phi * w : t \mapsto \int_{\R^2} \phi(t-s)
w_s \, \D s $ with more general $ \phi \star w : t \mapsto \int_{\R^2} \phi(t,s)
w_s \, \D s $. Assuming that
\begin{equation}\label{4.6}
\begin{gathered}
\phi \text{ is a Lebesgue measurable function on } \R^2 \times \R^2
\text{, and}\\
 \textstyle \text{the function } \, t \mapsto \sqrt{ \int_{\R^2}
 |\phi(t,s)|^2 \, \D s } \, \text{ is locally integrable on } \R^2 \, ,
\end{gathered}
\end{equation}
we may treat the Gaussian random field $ \phi \star w $ as a random element of
$ \VM(2,1) $. Here is a counterpart of Lemma \ref{4.5}.

\begin{lemma}\label{4.8}
Let $ w^{\al,\be} $ for $ \al,\be \in \{-1,1\} $ be four independent copies of
the white noise $w$ on $\R^2$, and five more copies\footnote{%
 Not necessarily independent.}
$ w^{\al,\be} $ for $ (\al,\be) \in \{-1,0,1\}\times\{-1,0,1\} \,\setminus\,
\{-1,1\}\times\{-1,1\} $ be defined by
\begin{equation*}
w^{\al,\be}_{t_1,t_2} = w^{\fnz(\al,\sgn t_1),\fnz(\be,\sgn
t_2)}_{t_1,t_2} \qquad \text{for } \al,\be \in \{-1,0,+1\} \, .
\end{equation*}
Let $ \phi : \R^2 \times \R^2 \to \R $ satisfy \eqref{4.6}, and
\[
G^{\al,\be} = \phi \star w^{\al,\be} \quad \text{for }
 (\al,\be) \in \{-1,0,1\}\times\{-1,0,1\} \, .
\]
Then $ G^{\doT,\doT} $ is a \splt{2} of $ G = \phi \star w $.
\end{lemma}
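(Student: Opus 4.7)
The plan is to mirror the argument for the Poisson case in Lemma \ref{4.5}. Checking that $G^{\doT,\doT}$ is a \splt{2} of $G$ amounts to verifying that (i) $(G^0,G^-,G^+)$ is a split of $G$, and that (ii) $(G^{\doT,0},G^{\doT,-},G^{\doT,+})$ is a split of the \component{3} field $G^\doT$; concretely, one needs $G^{\doT,-}$ and $G^{\doT,+}$ independent, and each $G^{\doT,\be}$ $(\be\in\{0,-,+\})$ distributed like $G^\doT$.

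First I would establish the analogous statements at the level of the nine noises $w^{\al,\be}$. The underlying fact is that for any Borel partition of $\R^2$ the restrictions of a white noise to the pieces are mutually independent, and stitching independent-copy restrictions together along such a partition produces a field distributed like the original white noise. Applied to the quadrant partition, the formula $w^{\al,\be}_{t_1,t_2} = w^{\fnz(\al,\sgn t_1),\fnz(\be,\sgn t_2)}_{t_1,t_2}$ shows that each of the nine $w^{\al,\be}$ is itself distributed like $w$. For each fixed $\be$, the triple $(w^{0,\be},w^{-,\be},w^{+,\be})$ has the split structure along $t_1=0$: $w^{-,\be}$ and $w^{+,\be}$ are built from disjoint subfamilies of the four independent copies $w^{\pm,\pm}$, hence are independent, while direct substitution gives $w^{0,\be}_{t_1,t_2} = w^{\sgn t_1,\be}_{t_1,t_2}$. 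The same reasoning one level up shows $w^{\doT,-}$ and $w^{\doT,+}$ depend on disjoint pairs of the four independent copies and are therefore independent, and the identity $w^{0,0}_{t_1,t_2} = w^{\sgn t_1,\sgn t_2}_{t_1,t_2}$ yields $w^{\doT,0}_{t_1,t_2} = w^{\doT,\sgn t_2}_{t_1,t_2}$. Thus $(w^{\al,\be})_{\al,\be}$ carries, in the appropriate sense, the structure of a \splt{2} of $w$.

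The second step is to transfer this structure to $G$. The map $w\mapsto\phi\star w$ is deterministic, linear, and measurable (well-defined on stochastic integrals thanks to \eqref{4.6}), so it preserves joint distributions and independence. Hence $(G^{\al,\be})_{\al,\be}=(\phi\star w^{\al,\be})_{\al,\be}$ inherits from $(w^{\al,\be})_{\al,\be}$ all the equalities in law and independence relations that make up the definition of a \splt{2}.

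The main obstacle, and the only place requiring care, is that white noise is not literally a random locally finite signed measure, so the $\VM$-based definition of \splt{2} does not apply to $w$ directly. I would handle this by phrasing the first step entirely inside the Gaussian Hilbert space generated by the nine noises, i.e.\ by controlling the joint distributions of the stochastic integrals $\int f\,\D w^{\al,\be}$ for $f\in L_2(\R^2)$; the quadrant independence of white noise then does all the work without reference to $\VM_2^1$. Condition \eqref{4.6} ensures that $G$ and all nine $G^{\al,\be}$ really are elements of $\VM_2^1$, so the inherited \splt{2} structure holds literally for $G^{\doT,\doT}$.
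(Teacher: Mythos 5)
Your plan — construct the two‑level split structure at the level of the nine noises $w^{\al,\be}$ and then push it forward through the linear map $\phi\star\cdot$ — is the same approach as the paper, which simply cites Lemma~\ref{4.5} and notes that the notions generalize to white noise and that the push‑forward properties \eqref{4.2}, \eqref{4.25} hold for $F=\phi\star\cdot$. However, you have misstated what a \splt{2} is, and consequently omitted the one non‑trivial verification.

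You say that checking $G^{\doT,\doT}$ is a \splt{2} ``amounts to verifying (i) $(G^0,G^-,G^+)$ is a split of $G$, and (ii) $(G^{\doT,0},G^{\doT,-},G^{\doT,+})$ is a split of $G^\doT$.'' Those two conditions only say that $G^{\doT,\doT}$ is \emph{a split of a split}; they are already built into the notation. The extra requirement singled out in Subsection~\ref{4a} (see the paragraph containing the footnoted counterexample) is that the \emph{transposed} array $(G^{0,\doT},G^{-,\doT},G^{+,\doT})$ must also be a split — in particular $G^{-,\doT}$ and $G^{+,\doT}$ must be independent and all three distributed like some split $G^{*,\doT}$ of $G$. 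This is exactly where the swap argument preceding Lemma~\ref{4.5} lives: one checks that exchanging $w^{-,+}$ with $w^{+,-}$ preserves the joint distribution and turns $(w^{\al,\be})_{\al,\be}$ into its transpose, so the transposed array is again a split of splits. Your second paragraph verifies independence of $w^{\doT,-}$ and $w^{\doT,+}$ (second index varying) but never addresses $w^{-,\doT}$ versus $w^{+,\doT}$ (first index varying), nor the equality in law between $w^{\al,\doT}$ for different $\al$. The gap is easy to close by the symmetry of the $\fnz$ formula, but as written the key condition distinguishing a \splt{2} from an arbitrary split of a split is neither stated nor checked.

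Your final paragraph about working inside the Gaussian Hilbert space is a sensible way to make the white‑noise statements precise and is consistent with the paper's remark that the notions ``generalize readily.'' Once the transposed‑split condition is added at the level of the $w^{\al,\be}$, pushing everything forward through the deterministic linear map $\phi\star\cdot$ as you propose does indeed give the result.
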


\begin{proof}
The proof of Lemma \ref{4.5} needs only trivial modification. The notions
``independent'', ``identically distributed'' and ``split'' generalize readily to
random generalized functions such as the white noise; and the counterparts
of \eqref{4.2}, \eqref{4.25} for $ F(\dots) = \phi \star \dots $ hold evidently.
\end{proof}

\begin{remark}\label{4.9}
If in addition $F$ is a \transformation{1,1} and
\[
X^{\al,\be} = F(G^{\al,\be}) \quad \text{for }
(\al,\be) \in \{-1,0,1\}\times\{-1,0,1\} \, ,
\]
then $ X^{\doT,\doT} $ is a \splt{2} of $ F(\phi \star w) $.
Proof: \eqref{4.25} and Lemma \ref{4.8}.
\end{remark}

\begin{lemma}\label{4.10}
Let $\phi$ and $ G^{\doT,\doT} $ be as in Lemma \ref{4.8}, and $ G^{(1)} $, $
G^{(2)} $, $ G^{(1,2)} $ as before, then

\noindent(a) for $ k=1,2 $, $ G^{(k)} $ is distributed like $ \phi^{(k)} \star w $ where
\[
\phi^{(k)} (t_1,t_2;s_1,s_2) = \begin{cases}
  \sqrt2 (\sgn t_k) \phi(t_1,t_2;s_1,s_2), &\text{when }\, (\sgn s_k)(\sgn t_k)=-1,\\
  0, &\text{otherwise}.
\end{cases}
\]

\noindent(b) $ G^{(1,2)} $ is distributed like $ \phi^{(1,2)} \star w $ where $
\phi^{(1,2)}(t_1,t_2;s_1,s_2) = $
\[
\begin{cases}
  2 (\sgn t_1) (\sgn t_2) \phi((t_1,t_2;s_1,s_2), &\text{when }\,
   (\sgn s_1)(\sgn t_1)=-1=(\sgn s_2)(\sgn t_2),\\
  0, &\text{otherwise}.
\end{cases}
\]
\end{lemma}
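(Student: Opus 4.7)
The plan is to compute each of $G^{(1)}$, $G^{(2)}$, $G^{(1,2)}$ directly from the split-of-split $G^{\doT,\doT}$ built in Lemma \ref{4.8} via $G^{\al,\be} = \phi \star w^{\al,\be}$, and to recognize each as the convolution of the claimed kernel with some single white noise $\tilde w$ explicitly assembled from the four independent copies $w^{\pm 1,\pm 1}$. Since all the fields in sight are centered Gaussian, exhibiting such an expression identifies the distribution.

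First I treat part (a) for $k=1$. By the definition of the leak, and since $\Leak_1(G^{\doT,\be})$ has the same distribution for every $\be$, we may fix $\be=0$ and write $G^{(1)}_{t_1,t_2} = G^{\sgn t_1,0}_{t_1,t_2} - G^{0,0}_{t_1,t_2}$. Substituting $G^{\al,\be} = \phi \star w^{\al,\be}$ and expanding $w^{\al,\be}_s = w^{\fnz(\al,\sgn s_1),\fnz(\be,\sgn s_2)}_s$, the integrand becomes $\phi(t_1,t_2;s_1,s_2)\bigl(w^{\sgn t_1,\sgn s_2}_s - w^{\sgn s_1,\sgn s_2}_s\bigr)$, which vanishes when $\sgn s_1 = \sgn t_1$ and otherwise equals $(\sgn t_1)\,\phi(t_1,t_2;s_1,s_2)\bigl(w^{+1,\sgn s_2}_s - w^{-1,\sgn s_2}_s\bigr)$. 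The key observation is that $\tilde w_s := \tfrac{1}{\sqrt 2}(w^{+1,\sgn s_2}_s - w^{-1,\sgn s_2}_s)$ is a single white noise on $\R^2$: on $\{s_2>0\}$ it equals $\tfrac{1}{\sqrt 2}(w^{+1,+1}-w^{-1,+1})$, a white noise since the difference of two independent standard white noises divided by $\sqrt 2$ is again a white noise; on $\{s_2<0\}$ it equals $\tfrac{1}{\sqrt 2}(w^{+1,-1}-w^{-1,-1})$, an independent white noise; and the two pieces glue into a white noise on all of $\R^2$ because they are independent and live on disjoint measurable sets. The integrand is then $\sqrt 2\,(\sgn t_1)\,\phi\,\tilde w$ on the region $(\sgn s_1)(\sgn t_1)=-1$ and zero otherwise, which is exactly $\phi^{(1)}(t;s)\,\tilde w_s$.

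Part (a) for $k=2$ follows by the symmetric argument with the two coordinates exchanged. For part (b), we use $G^{(1,2)}_{t_1,t_2} = G^{\sgn t_1,\sgn t_2}_{t_1,t_2} - G^{0,\sgn t_2}_{t_1,t_2} - G^{\sgn t_1,0}_{t_1,t_2} + G^{0,0}_{t_1,t_2}$. Expanding via the $\fnz$-rule, the integrand equals $\phi(t;s)$ times the four-term combination $w^{\sgn t_1,\sgn t_2}_s - w^{\sgn s_1,\sgn t_2}_s - w^{\sgn t_1,\sgn s_2}_s + w^{\sgn s_1,\sgn s_2}_s$. A direct case-check on the four sign patterns of $(\sgn s_1,\sgn s_2)$ against $(\sgn t_1,\sgn t_2)$ shows this combination vanishes unless both $\sgn s_1\neq\sgn t_1$ and $\sgn s_2\neq\sgn t_2$, in which case it equals $(\sgn t_1)(\sgn t_2)\,W_s$ with $W := w^{+1,+1}-w^{-1,+1}-w^{+1,-1}+w^{-1,-1}$ crucially independent of $(t_1,t_2)$. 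Since $W/2$ is a white noise on $\R^2$ (a norm-preserving combination of four independent white noises), we recover $G^{(1,2)}_t = \int \phi^{(1,2)}(t;s)\,(W/2)_s\,\D s$ with $\phi^{(1,2)}$ exactly as claimed.

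The one point requiring care, appearing in all three parts, is verifying that the assembled noise ($\tilde w$ in part~(a), $W/2$ in part~(b)) is a bona fide white noise on $\R^2$ and not merely a Gaussian field with the correct pointwise variance. This follows from two standard facts: an orthogonal transformation of independent white noises produces independent white noises; and two independent white noises supported on disjoint measurable sets concatenate to a white noise on their union. Beyond that, I expect no further obstacle aside from the combinatorial care needed to track signs in the four-way case analysis for $G^{(1,2)}$.
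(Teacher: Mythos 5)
Your proof is correct and follows essentially the same route as the paper's: substitute $G^{\al,\be}=\phi\star w^{\al,\be}$, expand the white-noise superscripts via the $\fnz$-rule, observe that the integrand vanishes except on the region of the claimed kernel's support, and recognize the assembled noise as a single white noise times the proper normalization. Two organizational differences are worth noting, though neither changes the substance. In part (a) you fix $\be=0$, which forces you to invoke the gluing fact (two independent white noises on disjoint half-planes concatenate to a white noise). The paper instead treats $\be$ as arbitrary and ends with $\tfrac{1}{\sqrt2}(w^{+,\be}-w^{-,\be})$; choosing $\be\in\{-1,+1\}$ makes this an honest difference of two of the four \emph{independent} copies, so no gluing is needed. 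Your choice is perfectly legitimate but marginally more work. In part (b) the paper proceeds by iterating the leak construction, applying $\Leak_2$ to the expression $\tfrac{1}{\sqrt2}\phi^{(1)}\star(w^{+,\be}-w^{-,\be})$ obtained in part (a), whereas you expand the four-term formula $G^{\sgn t_1,\sgn t_2}-G^{0,\sgn t_2}-G^{\sgn t_1,0}+G^{0,0}$ in one shot and do the sign case-check directly; your combination $W=w^{+,+}-w^{-,+}-w^{+,-}+w^{-,-}$ agrees with the paper's, and your verification that $W/2$ is a white noise (an orthonormal combination of four independent copies) matches the paper's closing remark. Both organizations are correct and carry the same content; yours is a bit more direct at the cost of the longer case analysis in (b) and the gluing observation in (a).
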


\begin{proof}
(a) Case $k=1$: $ G^{(1)} = \Leak_1(G^{\doT,*}) $ is distributed like $ \Leak_1
(G^{\doT,\be}) $ (irrespective of $\be$); and
\begin{multline*}
\( \Leak_1 ( G^{\doT,\be} ) \)_{t_1,t_2} = G^{\sgn t_1,\be}_{t_1,t_2} -
G^{0,\be}_{t_1,t_2} = \( \phi \star (w^{\sgn t_1,\be} - w^{0,\be}) \)_{t_1,t_2}
= \\
= \iint \phi(t_1,t_2;s_1,s_2) ( w^{\sgn t_1,\be}_{s_1.s_2} - w^{\sgn
s_1,\be}_{s_1.s_2} ) \, \D s_1 \D s_2
\end{multline*}
since $ w^{0,\be}_{s_1,s_2} = w^{\sgn s_1,\be}_{s_1,s_2} $; when $ \sgn s_1 =
\sgn t_1 $, the integrand vanishes; otherwise the integrand is $ (\sgn t_1) (
w^{+,\be}_{s_1,s_2} - w^{-,\be}_{s_1,s_2} ) $; thus
\[
\Leak_1 ( G^{\doT,\be} ) = \frac1{\sqrt2} \phi^{(1)} \star ( w^{+,\be} -
w^{-,\be} ),
\]
and $ \frac1{\sqrt2} ( w^{+,\be} - w^{-,\be} ) $ is distributed like $w$. Case $k=2$:
similar.

(b) $ G^{(1,2)} = \Leak_2 \Big( \( \Leak_1 ( G^{\doT,\be} ) \)_\be \Big)
= \Leak_2 \Big( \( \frac1{\sqrt2} \phi^{(1)} \star (w^{+,\be}-w^{-,\be}) \)_\be
\Big) $,  whence
\begin{multline*}
G^{(1,2)}_{t_1,t_2} = (\sgn t_1) \iint\limits_{\sgn s_1\ne\sgn t_1}
\phi(t_1,t_2;s_1,s_2) \( ( w^{+,\sgn t_2}_{s_1,s_2} - w^{-,\sgn t_2}_{s_1,s_2} )
- \\
- ( w^{+,\sgn s_2}_{s_1,s_2} - w^{-,\sgn s_2}_{s_1,s_2} ) \) \, \D s_1 \D s_2
\end{multline*}
since $ w^{\al,0}_{s_1,s_2} = w^{\al,\sgn s_2}_{s_1,s_2} $;
when $ \sgn s_2 = \sgn t_2 $, the integrand vanishes; otherwise the integrand is
$ (\sgn t_2) \( ( w^{+,+}_{s_1,s_2} - w^{-,+}_{s_1,s_2} ) -
( w^{+,-}_{s_1,s_2} - w^{-,-}_{s_1,s_2} ) \) $; thus
\[
G^{(1,2)} = \frac12 \phi^{(1,2)} \star (w^{+,+}-w^{+,-}-w^{-,+}+w^{-,-}) \, .
\]
Finally, $ \frac12 (w^{+,+}-w^{+,-}-w^{-,+}+w^{-,-}) $ is distributed like $w$.
\end{proof}

In order to satisfy the stationarity conditions (s1), (s2), (s3) of
Prop.~\ref{4.4} we assume (from now on, till the end of Subsection \ref{4c})
that $ \phi(t,s) $ depends only on $ t-s $; we write $ \phi(t-s) $
instead of $ \phi(t,s) $, and $ \phi * w $ instead of $ \phi \star w
$. Accordingly, $ \phi^{(1)} (t_1,t_2; s_1,s_2) $ turns into $ \phi^{(1)}
(t_1,s_1, t_2-s_2) = \sqrt2 (\sgn t_1) \phi(t_1-s_1,t_2-s_2) $ when $ \sgn s_1
\ne \sgn t_1 $ (and $0$ otherwise). Similarly, $ \phi^{(2)} (t_1,t_2; s_1,s_2) $
turns into $ \phi^{(2)} (t_1-s_1, t_2,s_2) = \sqrt2 (\sgn t_2)
\phi(t_1-s_1,t_2-s_2) $ when $ \sgn s_2 \ne \sgn t_2 $ (and $0$ otherwise).
And $ \phi^{(1,2)} (t_1,t_2;s_1,s_2) $ turns into $ \phi^{(1,2)}
(t_1,t_2;s_1,s_2) = 2 (\sgn t_1) (\sgn t_2) \phi(t_1-s_1,t_2-s_2) $ when $ \sgn
s_1 \ne \sgn t_1 $, $ \sgn s_2 \ne \sgn t_2 $ (and $0$ otherwise). Now,
Lemma \ref{4.10} ensures (s1), (s2), (s3) of Prop.~\ref{4.4}. Thus, (a1)--(a4)
of Prop.~\ref{4.4} are sufficient for \splittability{C} of $ X = \phi * w $.

Here is a counterpart of Prop.~\ref{2.4}.

\begin{proposition}\label{4.12}
If $ \phi \in L_2(\R^2) $ satisfies $ \esssup_{t\in\R^2} ( 1+|t| )^{\bal}
\phi^2(t) < \infty $ for some $ \bal > 6 $, then the Gaussian random field $ \phi
* w $ is splittable.
\end{proposition}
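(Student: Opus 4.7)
My approach is to invoke Proposition~\ref{4.4} applied to the \splt{2} $ G^{\doT,\doT} $ of $ G = \phi * w $ furnished by Lemma~\ref{4.8}. The stationarity hypotheses (s1)--(s3) follow from Lemma~\ref{4.10} together with the paragraph preceding the statement (since $\phi(t,s)=\phi(t-s)$); condition (b) is trivial for a centered Gaussian field. After the rescaling $\phi\mapsto\phi/C$ it therefore suffices to verify the four exponential-moment inequalities (a1)--(a4) for $C$ large enough. Each of them reduces, via the two-dimensional version of Lemma~\ref{2.6} (cf.\ Remark~\ref{2.7}), to checking that the corresponding integral $\int\sqrt{\Ex(\doT)^2}\,\D t$ is finite; making $C$ large then makes every exponential moment $\le2$ simultaneously.

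Condition (a1) is immediate, since $\Ex G_t^2=\|\phi\|_{L_2}^2$ is constant and the domain $[0,1)\times[0,1)$ is bounded. For (a2) and (a3) I use Lemma~\ref{4.10}(a); taking for example (a2), a change of variable gives
\[
\Ex\bigl(G^{(1)}_{t_1,t_2}\bigr)^2 = 2\int_{|u|>|t_1|}\int_{\R}\phi^2(u,v)\,\D u\,\D v \, ,
\]
and applying $\phi^2(u,v)\le C_1(1+|u|+|v|)^{-\bal}$ followed by two successive integrations yields the uniform-in-$t_2$ bound $\Ex(G^{(1)}_{t_1,t_2})^2\le\const\cdot(1+|t_1|)^{-(\bal-2)}$. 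Hence $\int_{\R\times[0,1)}\sqrt{\Ex(G^{(1)}_t)^2}\,\D t\lesssim\int_{\R}(1+|t_1|)^{-(\bal-2)/2}\,\D t_1<\infty$ as soon as $\bal>4$, and (a3) is symmetric.

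The tight step, and the main obstacle, is (a4). By Lemma~\ref{4.10}(b) and the same substitution,
\[
\Ex\bigl(G^{(1,2)}_{t_1,t_2}\bigr)^2 = 4\int_{|u|>|t_1|}\int_{|v|>|t_2|}\phi^2(u,v)\,\D u\,\D v \le \const\cdot(1+|t_1|+|t_2|)^{-(\bal-2)} \, ,
\]
again via the pointwise bound on $\phi^2$ and iterated integration. Therefore $\int_{\R^2}\sqrt{\Ex(G^{(1,2)}_t)^2}\,\D t\lesssim\iint_{\R^2}(1+|t_1|+|t_2|)^{-(\bal-2)/2}\,\D t_1\,\D t_2$, and this double integral converges if and only if $(\bal-2)/2>2$, i.e., precisely when $\bal>6$. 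This is exactly where the hypothesis is consumed; once (a1)--(a4) all hold with finite integrals, choosing a single large $C$ and applying Lemma~\ref{2.6} to $\frac1C G,\frac1C G^{(1)},\frac1C G^{(2)},\frac1C G^{(1,2)}$ gives \splittability{C}.
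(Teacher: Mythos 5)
Your proof is correct and follows the same skeleton as the paper: invoke Proposition~\ref{4.4}, take the \splt{2} from Lemma~\ref{4.8}, verify (s1)--(s3) via Lemma~\ref{4.10} and the surrounding discussion, and reduce (a1)--(a4) to finiteness of $\int\sqrt{\Ex(\cdot)^2}\,\D t$ via the two-dimensional Lemma~\ref{2.6}. The one genuine difference is in how you finish (a4): the paper develops Lemma~\ref{4.13}, a cone-pair estimate proved in polar coordinates (and reused later in Prop.~\ref{4.16}, Remark~\ref{4.13c}, Theorem~\ref{5.14}), and applies it to $K_1=(-\infty,0]^2$, $K_2=[0,\infty)^2$. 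You instead pass from the hypothesis $\phi^2(t)\le C_1(1+|t|)^{-\bal}$ to the comparable $\ell^1$-norm bound $\phi^2(u,v)\le C_1'(1+|u|+|v|)^{-\bal}$, which makes the double integral amenable to two successive elementary one-dimensional integrations, landing on $\iint(1+|t_1|+|t_2|)^{-(\bal-2)/2}\,\D t$ and hence the threshold $\bal>6$ directly. This is a legitimately simpler, self-contained calculation; what the paper's Lemma~\ref{4.13} buys in exchange is reusability across the later arguments (notably with arbitrary cone pairs and with exponents $\de<1$). One small slip: in your displayed formulas for $\Ex(G^{(1)}_t)^2$ and $\Ex(G^{(1,2)}_t)^2$ the symbol ``$=$'' should be ``$\le$'', since after the change of variables $u=t_1-s_1$, $v=t_2-s_2$ the region of integration is a half-line/quadrant with $\sgn u=\sgn t_1$ (resp.\ $\sgn v=\sgn t_2$), not the full set $\{|u|>|t_1|\}$ (resp.\ $\{|v|>|t_2|\}$); this over-counts by a bounded factor and is harmless for your upper bounds, but it is not an equality.
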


\begin{lemma}\label{4.13}
Let $ K_1, K_2 \subset \R^2 $ be closed sets such that $ K_1 \cap K_2 = \{0\} $,
and $ \la x \in K_i $ whenever $ x \in K_i $, $ \la \in [0,\infty) $. Then
\[
\int_{K_1} \sqrt{ \int_{K_2} \frac{ \D s }{ (1+|t-s|)^\bal } } \> \D t < \infty
\]
for all $ \bal>6 $.
\end{lemma}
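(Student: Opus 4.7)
The plan is to exploit the fact that $K_1$ and $K_2$ are closed cones meeting only at the origin, which forces their images on the unit circle to be disjoint and hence at positive distance, and then reduce the integral to a one-dimensional polar computation.

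First I would dispose of the degenerate cases: if either $K_1 = \{0\}$ or $K_2 = \{0\}$, then the relevant Lebesgue integral vanishes (either the outer integration domain has 2D measure zero, or the inner integrand is integrated over a null set). So we may assume both cones are nontrivial. Then $K_1 \cap S^1$ and $K_2 \cap S^1$ are disjoint compact subsets of the unit circle, hence separated by some distance $d > 0$. In particular, for any nonzero $t \in K_1$ and $s \in K_2$, writing $\hat t = t/|t|$, $\hat s = s/|s|$, we have $|\hat t - \hat s| \ge d$, so the angle $\theta$ between $t$ and $s$ satisfies $\cos\theta \le 1 - d^2/2$.

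The key geometric estimate is then
\[
|t-s|^2 = |t|^2 + |s|^2 - 2|t||s|\cos\theta \ge (|t|-|s|)^2 + d^2 |t||s| \ge \tfrac{d^2}{4}(|t|+|s|)^2 ,
\]
the last inequality following from $(|t|+|s|)^2 = (|t|-|s|)^2 + 4|t||s|$. Hence there is a constant $c > 0$ (depending only on $d$, hence only on $K_1,K_2$) with $|t-s| \ge c(|t|+|s|)$ for all $t \in K_1$ and $s \in K_2$, and consequently $1+|t-s| \ge c'(1+|t|+|s|)$ for some $c' > 0$. This replaces the awkward $|t-s|$ with the convenient $|t|+|s|$ inside the integrand.

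Next I would pass to polar coordinates in each cone. Since $K_2 \subset \R^2$ is a cone, its intersection with any circle of radius $r$ has $1$-dimensional Hausdorff measure at most $2\pi r$, so
\[
\int_{K_2} \frac{\D s}{(1+|t|+|s|)^\bal} \le 2\pi \int_0^\infty \frac{r \, \D r}{(1+|t|+r)^\bal} \le \frac{C_1}{(1+|t|)^{\bal-2}}
\]
for a constant $C_1$ depending only on $\bal$ (computed by substitution $u=1+|t|+r$ and using $\bal > 2$). Taking the square root and integrating over $K_1$ in polar coordinates,
\[
\int_{K_1}\! \sqrt{\int_{K_2}\! \frac{\D s}{(1+|t-s|)^\bal}}\,\D t \le C_2 \int_0^\infty \frac{r \, \D r}{(1+r)^{\bal/2-1}},
\]
which converges precisely when $\bal/2 - 1 - 1 > 1$, that is, $\bal > 6$. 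The only real obstacle is the initial geometric separation estimate; once $|t-s|\ge c(|t|+|s|)$ is in hand, the remainder is a routine two-stage polar computation in which the exponent thresholds $\bal > 2$ (inner) and $\bal > 6$ (outer) match exactly the hypothesis.
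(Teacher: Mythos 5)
Your proof is correct. The degenerate reduction, the compactness/separation argument on $S^1$, the algebraic verification that $(|t|-|s|)^2 + d^2|t||s| \ge \tfrac{d^2}{4}(|t|+|s|)^2$ (which holds because $d \le 2$, being a distance on the unit circle), and the two polar integrations are all sound, and the exponent thresholds come out exactly right.

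You follow the same basic route as the paper — a geometric separation estimate from the cone hypothesis, followed by two successive polar computations — but your version of the separation estimate is cleaner. The paper establishes
$|x_1 - x_2| \ge \max\bigl(\eps|x_1|,\, \eps|x_2|,\, \bigl||x_1|-|x_2|\bigr|\bigr)$
with $\eps$ the distance from $K_1 \cap S^1$ to $K_2$, and then estimates the inner integral $I(t) = \int_{K_2} (1+|t-s|)^{-\bal}\,\D s$ by splitting into the cases $|t| \ge 1/\eps$ and $|t| < 1/\eps$, each requiring a further split of the radial integral into near and far ranges; the resulting bookkeeping of constants is considerably heavier. Your symmetric form $|t-s| \ge c(|t|+|s|)$, hence $1 + |t-s| \ge c'(1 + |t| + |s|)$, makes the inner bound $I(t) \lesssim (1+|t|)^{-(\bal-2)}$ an immediate one-line polar computation with no case split at all, and the outer integral is then routine. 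So: same skeleton, but your key inequality packages the separation more symmetrically and buys a substantially shorter computation.
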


\begin{proof} \let\qed\relax
WLOG, $ K_1 \ne \{0\} $, $ K_2 \ne \{0\} $.
First, $ \inf \{ |x_1-x_2| : x_1 \in K_1, x_2 \in K_2, |x_1|=1 \} = \eps \in
(0,1] $ (since $ \{ x_1 \in K_1 : |x_1|=1 \} $ is compact, $ K_2 $ is closed, and
their intersection is empty). Second, $ \inf \{ |x_1-x_2| : x_1 \in K_1, x_2 \in
K_2, |x_1|\ge1 \} = \eps $ (since $ \big| \frac{x_1}{|x_1|} - \frac{x_2}{|x_1|}
\big| \ge \eps $). Third, $ \inf \{ |x_1-x_2| : x_1 \in K_1, x_2 \in
K_2, \max(|x_1|,|x_2|) \ge1 \} = \eps $ (since the ``Second'' claim applies also
to $K_2,K_1$). Thus,
\[
|x_1-x_2| \ge \max \( \eps |x_1|, \eps |x_2|, \big| |x_1| - |x_2| \big| \) \quad
\text{for all } x_1 \in K_1, x_2 \in K_2 \, .
\]
We estimate the internal integral $ I(t) = \int_{K_2} \frac{ \D s }{
(1+|t-s|)^\bal } $ using polar coordinates;
\[
I(t) \le \int_0^\infty \frac{ 2\pi r \, \D r}{(1+\max\( \eps r, \eps |t|, \big|
r-|t| \big| \)^\bal } \, .
\]
\textsc{Case $ |t| \ge \frac1{\eps} $. }
\begin{multline*}
I(t) \le \int_0^\infty \frac{ 2\pi r \, \D r}{(1+\max\( \eps |t|, \big| r-|t|
 \big| \)^\bal } \le \\
\int_0^{(1+\eps)|t|} \frac{ 2\pi r \, \D r}{(1+\eps|t|)^\bal} +
 \int_{(1+\eps)|t|}^\infty  \frac{ 2\pi r \, \D r}{(1+r-|t|)^\bal} = \\
\frac{ 2\pi }{(1+\eps|t|)^\bal} \cdot \frac12 (1+\eps)^2 |t|^2 + 2\pi
 \int_{1+\eps|t|}^\infty \frac{ u + |t| - 1 }{ u^\bal } \, \D u = \\
\frac{ \pi (1+\eps)^2 }{ \eps^2 } \frac{ (\eps|t|)^2 }{ (1+\eps|t|)^\bal } +
 2\pi \bigg(
 \frac1{(\bal-2)(1+\eps|t|)^{\bal-2}} +  \frac{ |t|-1 }{(\bal-1)(1+\eps|t|)^{\bal-1}}
 \bigg) \le \\
\frac{ \pi }{ (1+\eps|t|)^{\bal-2} } \bigg( \frac{(1+\eps)^2 }{ \eps^2 } +
 \frac2{\bal-2} + \frac{ 2 (|t|-1) }{(\bal-1)(1+\eps|t|) } \bigg) \le \\
\frac{\pi}{ \eps^{\bal-2} (1+|t|)^{\bal-2} } \bigg( \frac4{\eps^2} + \frac12 +
\frac2{5\eps} \bigg) = \frac{ \const(\eps) }{ (1+|t|)^{\bal-2} } \, ;
\end{multline*}
here ``$\const(\eps)$'' means a constant dependent on $\eps$ only.

\textsc{Case $ |t| \le \frac1{\eps} $. }
\begin{multline*}
I(t) \le \int_0^\infty \frac{ 2\pi r \, \D r}{\(1+\max( 0,
 r-\frac1\eps)\)^\bal } =
 \int_0^{1/\eps} 2\pi r \, \D r + \int_{1/\eps}^\infty \frac{ 2\pi r \, \D
 r}{\(1+r-\frac1\eps\)^\bal } = \\
\frac\pi{\eps^2} + 2\pi \int_1^\infty \frac{ u + \frac1\eps - 1 }{ u^\bal } \,
\D u \le \pi \Big( \frac1{\eps^2} + \frac2{\bal-2} + \frac2{(\bal-1)\eps} \Big)
\le \pi \Big( \frac1{\eps^2} + \frac12 + \frac2{5\eps} \Big) \, ;
\end{multline*}
just $\const(\eps)$. Finally,
\begin{multline*}
\int_{K_1} \sqrt{ I(t) } \, \D t \le
 \int_0^\infty 2\pi r \sqrt{ \frac{ \const(\eps) }{ (1+r)^{\bal-2} } } \, \D r
 \le \\
\const(\eps) \int_0^\infty \frac{\D r}{(1+r)^{\frac12\bal-2}} \le \frac{
 \const(\eps) }{ \bal - 6 } \, . \qquad \rlap{\qedsymbol}
\end{multline*}
\end{proof}

\begin{remark}\label{4.13a}
For $ \bal\le6 $ the integral diverges, unless at least one of $K_1$, $K_2$ is of
measure $0$. For the proof, consider $ K_{1,n} = \{ x \in K_1 : 2^n < |x| <
2^{n+1} \} $, $ K_{2,n} = \{ x \in K_2 : 2^n < |x| < 2^{n+1} \} $ and note that
$ \int_{K_{1,n}} \sqrt{ \int_{K_{2,n}} \frac{ \D s }{ (1+|t-s|)^\bal } } \> \D t
\ge \const \cdot (2^n)^2 \cdot \sqrt{ (2^n)^2 \cdot \frac1{2^{\bal n}} } = \const
\cdot 2^{\frac12(6-\bal)n} $.
\end{remark}

\begin{remark}\label{4.13b}
More generally, given $ \de>0 $, the integral $ \int_{K_1} \( \int_{K_2} \frac{
\D s }{ (1+|t-s|)^\bal } \)^{\de/2} \, \D t $ converges for all $ \bal >
2\(1+\frac2\de\) $. The proof is the same, except for the last phrase: finally,
\begin{multline*}
\int_{K_1} ( I(t) )^{\de/2} \, \D t \le
 \int_0^\infty 2\pi r \Big( \frac{ \const(\eps) }{ (1+r)^{\bal-2} } \Big)^{\de/2}
 \, \D r  \le \\
\const(\eps) \int_0^\infty \frac{\D r}{(1+r)^{\frac12(\bal-2)\de-1}} \le \frac{
 \const(\eps) }{ (\bal-2)\de - 4 } \, .
\end{multline*}
\end{remark}

\begin{remark}\label{4.13c}
Instead of integrating over $K_1$, we may take the sum over the integer points
in $K_1$:
\[
\sum_{t\in K_1\cap\Z^2} \Big( \int_{K_2} \frac{ \D s }{ (1+|t-s|)^\bal }
\Big)^{\de/2} \> \D t < \infty
\]
for all $ \bal>2\(1+\frac2\de\) $.

\begin{sloppypar}
Proof: $ \sum_{t\in K_1\cap\Z^2} \( I(t) \)^{\de/2} \le \const(\eps) \sum_{t\in
  K_1\cap\Z^2} (1+|t|)^{-(\bal-2)\de/2} \le \const(\eps) \sum_{t\in K_1\cap\Z^2}
\int_{|t|}^\infty \frac12(\bal-2)\de (1+r)^{-\frac{(\bal-2)\de+2}2} \, \D r =
\const(\eps) \int_0^\infty \frac12(\bal-2)\de (1+r)^{-\frac{(\bal-2)\de+2}2} N(r)
\, \D r $, where $N(r)$ is the number of $ t\in K_1\cap\Z^2 $ such that $ |t|<r
$; clearly, $ N(r) \le \pi \( r + \frac{\sqrt2}2 \)^2 $.

The same applies to a shifted lattice, and the bound is uniform over shifts:
\[
\sup_{\theta\in\R^2} \sum_{t\in K_1\cap(\Z^2+\theta)} \Big( \int_{K_2} \frac{ \D
  s }{ (1+|t-s|)^\bal } \Big)^{\de/2} \> \D t < \infty \, .
\]
\end{sloppypar}
\end{remark}

\begin{sloppypar}
\begin{proof}[Proof of Prop.~\ref{4.12}]
We inspect Conditions (a1)--(a4) of Prop.~\ref{4.4}, using Lemma \ref{4.10}, for $ G
= \phi * w $.

(a1) According to Lemma \ref{2.6} (used as in Item (a) of the proof of
Prop.~\ref{2.4}), in order to get (a1) for large $C$, it is sufficient to prove
that $ \int_{[0,1)\times[0,1)} \sqrt{ \Ex G_t^2 } \, \D t < \infty $. This is
immediate, since $ \Ex G_t^2 = \int_{\R^2} \phi^2(t-s) \, \D s = \int_{\R^2}
\phi^2(s) \, \D s $ is finite and does not depend on $t$.

(a2) By Lemma \ref{2.6} again, it is sufficient to prove that $
\int_{\R\times[0,1)} \sqrt{ \Ex (G^{(1)}_t)^2 } \, \D t < \infty $. Denoting by
  $M$ the given $\esssup$, we have $
\Ex (G^{(1)}_t)^2 = \iint \( \phi^{(1)} (t_1,s_1,t_2-s_2) \)^2 \, \D s_1 \D s_2
= 2 \iint_{\sgn s_1 \ne \sgn t_1} \phi^2(t_1-s_1,t_2-s_2) \, \D s_1 \D s_2 \le
2M \iint_{\sgn s_1 \ne \sgn t_1} \( 1 + \sqrt{ (t_1-s_1)^2+(t_2-s_2)^2 }
\)^{-\bal} \, \D s_1 \D s_2 = 2M \iint_{s_1>|t_1|} (1+\sqrt{s_1^2+s_2^2})^{-\bal} \,
\D s_1 \D s_2 \le 2M \iint_{s_1>0, s_1^2+s_2^2>t_1^2} (1+\sqrt{s_1^2+s_2^2})^{-\bal} \,
\D s_1 \D s_2 = 2M \int_{|t_1|}^\infty (1+r)^{-\bal} \pi r \, \D r \le 2\pi M
\int_{|t_1|}^\infty (1+r)^{-(\bal-1)} \, \D r = \frac{2\pi M}{\bal-2}
(1+|t_1|)^{-(\bal-2)} $, thus,
$ \int_{\R\times[0,1)} \sqrt{ \Ex (G^{(1)}_t)^2 } \, \D t \le \sqrt{ \frac{2\pi
M}{\bal-2} } \iint_{0<t_2<1} (1+|t_1|)^{-(\frac\bal2-1)} \, \D t_1 \D t_2 = \sqrt{
\frac{2\pi M}{\bal-2} } \cdot 2 \int_0^\infty (1+|t_1|)^{-(\frac\bal2-1)} \, \D
t_1 < \infty $ for $ \bal > 4 $, the more so, for $ \bal > 6 $.

(a3) Similar to (a2); just swap the two coordinates on $\R^2$.

\raggedright{
$\quad\,$ (a4) Again, it is sufficient to prove that $ \int_{\R^2} \sqrt{ \Ex
(G^{(1,2)}_t)^2 } \, \D t < \infty $. We have $ \Ex (G^{(1,2)}_{t_1,t_2})^2 =
\iint \( \phi^{(1,2)} (t_1,t_2;s_1,s_2) \)^2 \, \D s_1 \D s_2 = 4 \iint_{\sgn
s_1 \ne \sgn t_1,\sgn s_2 \ne \sgn t_2} \phi^2(t_1-s_1,t_2-s_2) \, \D s_1 \D s_2
\le 4M \iint_{\sgn s_1 \ne \sgn t_1,\sgn s_2 \ne \sgn t_2} \( 1 + \sqrt{
(t_1-s_1)^2+(t_2-s_2)^2 } \)^{-\bal} \, \D s_1 \D s_2 = 4M \iint_{s_1>0,s_2>0} \(
1 + \sqrt{ (|t_1|+s_1)^2+(|t_2|+s_2)^2 } \)^{-\bal} \, \D s_1 \D s_2 $.
We apply Lemma \ref{4.13} to $ K_1 = (-\infty,0] \times (-\infty,0] $, $ K_2 =
[0,\infty) \times [0,\infty) $, and note that $ \Ex (G^{(1,2)}_t)^2 \le 4M
\int_{K_2} (1+|t-s|)^{-\bal} \, \D s $ for $ t \in K_1 $, thus $ \int_{\R^2}
\sqrt{ \Ex (G^{(1,2)}_t)^2 } \, \D t \le 4 \int_{K_1} \sqrt{ 4M \int_{K_2}
(1+|t-s|)^{-\bal} \, \D s } \, \D t < \infty $ for $ \bal>6 $.}
\end{proof}
\end{sloppypar}

\section[Dimension two: more]
  {\raggedright Dimension two: more}
\label{sect5}
\subsection{Lipschitz functions of stationary Gaussian random fields}

Here is a counterpart of Prop.~\ref{3.1}.

\begin{sloppypar}
\begin{proposition}\label{4.16}
Assume that $ \phi \in L_2(\R^2) $ satisfies $ \int_{\R^2} \phi^2(t) \, \D t = 1
$, $ \esssup_{t\in\R} (1+|t|)^\bal \phi^2(t) < \infty $ for some $ \bal>6 $, and $
\psi : \R \to \R $ satisfies the Lipschitz condition, and $ \int_\R \psi(u)
\E^{-u^2/2} \, \D u = 0 $. Then the following random field is
splittable: $ X = \psi(\phi*w) $, that is, $ X_t = \psi\((\phi*w)_t\) $.
\end{proposition}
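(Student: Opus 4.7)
The plan is to mimic the one-dimensional argument of Prop.~\ref{3.1} (which combines Prop.~\ref{2.3} with Prop.~\ref{2.4}) in dimension two, verifying the eight conditions of Prop.~\ref{4.4} and importing the Gaussian estimates from the proof of Prop.~\ref{4.12}. First, apply Lemma~\ref{4.8} to $G=\phi*w$ to obtain a \splt{2} $G^{\doT,\doT}$; by Remark~\ref{4.9}, setting $X^{\al,\be}:=\psi(G^{\al,\be})$ then yields a \splt{2} $X^{\doT,\doT}$ of $X=\psi(G)$. The task reduces to checking the eight conditions of Prop.~\ref{4.4} for this \splt{2}.

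The structural conditions (s1), (s2), (s3), (b) are routine. Stationarity of $X$ is inherited from that of $G$, since $\phi(t,s)=\phi(t-s)$. The $t_2$-shift invariance of the distribution of $X^{(1)}$ (and symmetrically, the $t_1$-shift invariance of $X^{(2)}$) follows from the analogous property of the joint distribution of the triple $(G^{0,\be},G^{-,\be},G^{+,\be})$ for each fixed $\be$, which in turn is a direct consequence of the white-noise construction of Lemma~\ref{4.8} (where only the $s_1$-axis is distinguished) combined with translation invariance of $\phi$. Condition (b) is immediate from $\int_\R\psi\,\D\gR=0$ together with the fact that every $G^{\al,\be}_t$ is marginally $N(0,1)$. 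The quantitative conditions (a1), (a2), (a3) also transfer by Lipschitz continuity in the spirit of Prop.~\ref{4.5a}: the pointwise bounds $|X_t|\le L|G_t|+|\psi(0)|$ and $|X^{(k)}_t|\le L|G^{(k)}_t|$ reduce each to an exponential-moment estimate for $G$, $G^{(1)}$, or $G^{(2)}$ that is already established inside the proof of Prop.~\ref{4.12}.

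The hard part is (a4), for exactly the reason flagged by Remark~\ref{4.6a}: Lipschitz continuity does not control the second difference $|\psi(a)-\psi(b)-\psi(c)+\psi(d)|$ by $|a-b-c+d|$. The way out is the two-sided bound from that remark,
\[
|X^{(1,2)}_t|\le L\min\bigl(\,|G^{\sgn t_1,\sgn t_2}_t-G^{0,\sgn t_2}_t|+|G^{\sgn t_1,0}_t-G^{0,0}_t|,\;|G^{\sgn t_1,\sgn t_2}_t-G^{\sgn t_1,0}_t|+|G^{0,\sgn t_2}_t-G^{0,0}_t|\,\bigr).
\]
A variance computation as in the proof of Prop.~\ref{4.12}(a2) shows that each summand in the first argument of the $\min$ has variance $\le\const\cdot(1+|t_1|)^{-(\bal-2)}$, while each summand in the second has variance $\le\const\cdot(1+|t_2|)^{-(\bal-2)}$. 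Partition $\R^2=S_1\cup S_2$ with $S_1=\{|t_1|\ge|t_2|\}$, $S_2=\{|t_2|>|t_1|\}$, and use the first bound on $S_1$, the second on $S_2$: on $S_k$ the governing factor is $\const\cdot(1+|t|)^{-(\bal-2)/2}$, whose integral over $\R^2$ converges precisely because $\bal>6$ (the same threshold as in Lemma~\ref{4.13}). Lemma~\ref{2.6} then bounds each of the four resulting exponential moments $\Ex\exp\tfrac1C\int_{S_k}|\cdot|\,\D t$, and generalized H\"older (used as in Prop.~\ref{2.2} or Prop.~\ref{2.4}) combines them to give (a4) for $C$ large enough. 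This last step is the main obstacle; everything else is a routine two-dimensional adaptation of the one-dimensional arguments.
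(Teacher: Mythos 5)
Your proposal is correct and follows essentially the same path as the paper's own proof: build the \splt{2} via Lemma~\ref{4.8} and Remark~\ref{4.9}, verify (s1)--(s3) and (b) from the structure of the white-noise construction, transfer (a1)--(a3) by Lipschitz continuity, and handle (a4) via the second-difference bound $|X^{(1,2)}_t|\le L\min(|a-b|+|c-d|,|a-c|+|b-d|)$ from Remark~\ref{4.6a}, choosing the first argument on $\{|t_1|\ge|t_2|\}$ and the second on its complement. The only differences from the paper are cosmetic: the paper packages the Gaussian estimate via Lemma~\ref{4.17} (the two-process variant of Lemma~\ref{2.6}) and Lemma~\ref{4.13}, whereas you split the sum by convexity and use Lemma~\ref{2.6} on each term together with the one-coordinate variance bound $\Ex(G^{(1)}_t)^2\le\const(1+|t_1|)^{-(\bal-2)}$ already established in the proof of Prop.~\ref{4.12}(a2); both yield the same $\bal>6$ threshold.
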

\end{sloppypar}

First, a rather trivial generalization of Lemma \ref{2.6}.

\begin{lemma}\label{4.17}
The inequality
\[
\Ex \exp \int_A (|\xi_t|+|\eta_t|) \, \D t \le \E^{C^2/2} \frac2{\sqrt{2\pi}}
\int_{-\infty}^C \E^{-u^2/2} \, \D u \le \exp \bigg( \sqrt{\frac2\pi} C
+ \frac12 C^2 \bigg) \, ,
\]
where $ C = \int_A ( \sqrt{ \Ex \xi_t^2 } + \sqrt{ \Ex \eta_t^2 } ) \, \D t $, holds
for every Lebesgue measurable set $ A \subset \R $ and every pair of centered
Gaussian process $\xi,\eta$ on $A$
whose covariation and cross-covariation functions are Lebesgue measurable
on $ A \times A $.
\end{lemma}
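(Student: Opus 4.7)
The plan is to mimic the proof of Lemma~\ref{2.6} almost verbatim, the single novelty being that we now have two pointwise standard-deviation functions to normalize by, to be glued together by a single application of convexity of $\exp$. I would set $\si_t = \sqrt{\Ex \xi_t^2}$ and $\tau_t = \sqrt{\Ex \eta_t^2}$, so that $\int_A (\si_t+\tau_t)\,\D t = C$; if $C = \infty$ the inequality is vacuous, so assume $C < \infty$.

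The core step is the convexity estimate. I would rewrite
\begin{equation*}
\int_A (|\xi_t|+|\eta_t|)\,\D t = C \int_A \Big( \frac{|\xi_t|}{\si_t} \cdot \frac{\si_t}{C} + \frac{|\eta_t|}{\tau_t} \cdot \frac{\tau_t}{C} \Big)\,\D t,
\end{equation*}
view $\frac{\si_t}{C}\,\D t$ on one copy of $A$ together with $\frac{\tau_t}{C}\,\D t$ on a disjoint copy as a probability measure on $A \sqcup A$, and apply convexity of $\exp$ to obtain
\begin{equation*}
\exp \int_A (|\xi_t|+|\eta_t|)\,\D t \le \int_A \Big(\exp \frac{C|\xi_t|}{\si_t}\Big)\frac{\si_t}{C}\,\D t + \int_A \Big(\exp \frac{C|\eta_t|}{\tau_t}\Big)\frac{\tau_t}{C}\,\D t.
\end{equation*}
Taking expectations and exchanging them with the $t$-integrals by Tonelli (legal thanks to separability of $L_2(\Om)$, cf.\ Remark~\ref{2.7}), I would invoke the marginal identity
\begin{equation*}
\Ex \exp \frac{C|\xi_t|}{\si_t} = \Ex \exp \frac{C|\eta_t|}{\tau_t} = \E^{C^2/2}\,\frac{2}{\sqrt{2\pi}} \int_{-\infty}^C \E^{-u^2/2}\,\D u,
\end{equation*}
valid because each of $\xi_t/\si_t$ and $\eta_t/\tau_t$ is marginally $N(0,1)$, and already computed in the proof of Lemma~\ref{2.6}. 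Substituting and using $\int_A (\si_t+\tau_t)/C\,\D t = 1$ yields the first claimed inequality; the second, $\E^{C^2/2}\cdot\frac{2}{\sqrt{2\pi}}\int_{-\infty}^C \E^{-u^2/2}\,\D u \le \exp(\sqrt{2/\pi}\,C + \tfrac12 C^2)$, is the elementary estimate already established at the end of the proof of Lemma~\ref{2.6}.

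There is no genuine obstacle; the argument is essentially a bookkeeping variant of the one already given. The only point worth emphasizing is that the bound must be insensitive to the joint distribution of $\xi$ and $\eta$, and indeed the convexity step neatly replaces the coupled quantity $|\xi_t|+|\eta_t|$ by a convex combination of exponentials each depending only on a single marginal, so that no hypothesis on the cross-covariation of $\xi$ and $\eta$ is needed beyond joint measurability.
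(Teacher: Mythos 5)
Your proof is correct and follows essentially the same route as the paper's: normalize each process by its own pointwise standard deviation, glue the two weights into a single probability measure, apply Jensen's inequality for $\exp$, and invoke the $N(0,1)$ marginal computation from Lemma~\ref{2.6}. The only (harmless) cosmetic difference is your explicit phrasing in terms of a probability measure on the disjoint union $A\sqcup A$, which the paper leaves implicit.
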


\begin{proof}\let\qed\relax
Similarly to the proof of Lemma \ref{2.6},
denoting $ \si_t = \sqrt{ \Ex \xi_t^2 } $ and $ \de_t = \sqrt{ \Ex \eta_t^2 } $ we
have $ \int_A ( \si_t + \de_t ) \, \D t = C $ and
\[
\Ex \exp C \frac{|\xi_t|}{\si_t} = \Ex \exp C \frac{|\eta_t|}{\de_t} =
\E^{C^2/2} \frac2{\sqrt{2\pi}} \int_{-\infty}^C \E^{-u^2/2} \, \D u
\]
for all $ t \in A $; by convexity of $ \exp $,
\begin{multline*}
\exp \int_A ( |\xi_t| + |\eta_t| ) \, \D t = \exp \bigg(
 C \int_A \frac{|\xi_t|}{\si_t} \frac{\si_t}C \, \D t + C \int_A
 \frac{|\eta_t|}{\de_t} \frac{\de_t}C \, \D t \bigg) \le \\
\int_A \Big( \exp C \frac{|\xi_t|}{\si_t} \Big) \frac{\si_t}C \, \D t +
 \int_A \Big( \exp C \frac{|\eta_t|}{\de_t} \Big) \frac{\de_t}C \, \D t \, ;
\end{multline*}
thus,
\[
\qquad\qquad
\Ex \exp \int_A ( |\xi_t| + |\eta_t| ) \, \D t \le
\E^{C^2/2} \frac2{\sqrt{2\pi}} \int_{-\infty}^C \E^{-u^2/2} \, \D u \, .
\qquad\qquad\qquad \rlap{$\qedsymbol$}
\]
\end{proof}

\begin{sloppypar}
\begin{proof}[Proof of Prop.~\ref{4.16}]
We use the \splt{2} $ G^{\doT,\doT} $ of $ G = \phi * w $ given by Lemma
\ref{4.8}, and the corresponding \splt{2} $ X^{\doT,\doT} $ of $ X = \psi(G) $,
that is, $ X^{\al,\be} = \psi(G^{\al,\be}) $ (recall Remark \ref{4.9}). The
corresponding leaks are
\begin{gather*}
X^{(1)}_{t_1,t_2} = X^{\sgn t_1,*}_{t_1,t_2} - X^{0,*}_{t_1,t_2} \, ; \quad
 X^{(2)}_{t_1,t_2} = X^{*,\sgn t_2}_{t_1,t_2} - X^{*,0}_{t_1,t_2} \, ; \\
X^{(1,2)}_{t_1,t_2} = X^{\sgn t_1,\sgn t_2}_{t_1,t_2} - X^{0,\sgn t_2}_{t_1,t_2}
 - X^{\sgn t_1,0}_{t_1,t_2} + X^{0,0}_{t_1,t_2} \, .
\end{gather*}
We inspect the conditions of Prop.~\ref{4.4}. Condition (s1) holds for $X$,
since it holds for $G$. Condition (s2) holds for $X^{(1)}$, not just since it
holds for $G^{(1)}$ (because $X^{(1)}$ is not $\psi(G^{(1)})$), but rather,
since the joint distribution of three fields $ G^{0,*}, G^{-,*}, G^{+,*} $ is
invariant under shifts $ (t_1,t_2) \mapsto (t_1,t_2+s) $. Similarly, Condition
(s3) holds for $X^{(2)}$. Condition (b) holds, since $ \int \psi \, \D\gR = 0
$.

It remains to check Conditions (a1)--(a4). For $ G, G^{(1)}, G^{(2)}, G^{(1,2)} $
they hold (as was seen in the proof of Prop.~\ref{4.12}). It follows by
Prop.~\ref{4.5a}(c) that (a2), (a3) hold for $ X^{(1)}, X^{(2)} $. Also, (a1)
holds for $X$ by the argument used in Item (a) of the proof of Prop.~\ref{2.3}
(just replace $ \psi(\cdot) $ with $ \psi(\cdot) - \psi(0) $, and integrate over
$ [0,1)\times[0,1) $ instead of $ [s,s+1) $).

Condition (a4) needs more effort, as noted in Remark \ref{4.6a}; we have $
X^{(1,2)}_{t_1,t_2} = \psi(a) - \psi(b) - \psi(c) + \psi(d) $, where $ a =
G^{\sgn t_1,\sgn t_2}_{t_1,t_2} $, $ b = G^{0,\sgn t_2}_{t_1,t_2} $, $ c =
G^{\sgn t_1,0}_{t_1,t_2} $, $ d = G^{0,0}_{t_1,t_2} $. Thus, $
|X^{(1,2)}_{t_1,t_2}| \le \min(|a-b|+|c-d|,|a-c|+|b-d|) $, assuming WLOG that
the Lipschitz constant of $\psi$ is $1$ (or less). We use $ |a-b|+|c-d| $ when $
|t_1| > |t_2| $, and $ |a-c|+|b-d| $ when $ |t_1| < |t_2| $.
Lemma \ref{4.17} reduces (a4) (for large $C$) to
$ \int_A ( \sqrt{ \Ex \xi_t^2 } + \sqrt{ \Ex \eta_t^2 } ) \, \D t < \infty $
where $ A = \{ (t_1,t_2) \in \R^2 : |t_1| > |t_2| \} $, $ \xi_{t_1,t_2} = a-b =
G^{\sgn t_1,\sgn t_2}_{t_1,t_2} - G^{0,\sgn t_2}_{t_1,t_2} $ and $
\eta_{t_1,t_2} = c-d = G^{\sgn   t_1,0}_{t_1,t_2} - G^{0,0}_{t_1,t_2} $. The
other case, $ |t_1| > |t_2| $, follows by swapping the two coordinates.

We note that $ \eta = \Leak_1(G^{\doT,0}) $ is distributed like $ G^{(1)} $,
while $ \xi $ is distributed differently, being glued out of two independent
fields, $ \Leak_1(G^{\doT,-}) $ on $ \{t_2<0\} $ and $ \Leak_1(G^{\doT,+}) $
on $ \{t_2\ge0\} $, each distributed like $ G^{(1)} $. Anyway, $ \Ex \xi_t^2 =
\Ex \eta_t^2 = \Ex (G_t^{(1)})^2 $ for all $ t \in \R^2 $; it is sufficient to
prove that $ \iint_{|t_1|>|t_2|} \sqrt{ \Ex (G_{t_1,t_2}^{(1)})^2 } \, \D t_1 \D
t_2 < \infty $. By Lemma \ref{4.10}(a), $ G^{(1)} $ is distributed like $
\phi^{(1)} \star w $; thus $ \Ex (G_{t_1,t_2}^{(1)})^2 = \iint_{\R^2} \(
\phi^{(1)} (t_1,s_1,t_2-s_2) \)^2 \, \D s_1 \D s_2 = 2 \iint_{\sgn s_1 \ne \sgn
t_1} \phi^2(t_1-s_1,t_2-s_2) \, \D s_1 \D s_2 $. We have to prove that
\[
\iint_{t_1>|t_2|} \sqrt{ \iint_{s_1<0} \( 1 + \sqrt{ (t_1-s_1)^2 + (t_2-s_2)^2 }
\)^{-\bal} \, \D s_1 \D s_2 } \> \D t_1 \D t_2 < \infty
\]
(the other case, $ t_1 < -|t_2| $ and $ s_1>0 $, gives the same). It remains to
apply Lemma \ref{4.13} to $ K_1 = \{ (t_1,t_2) : t_1 \ge |t_2| \} $, $ K_2 = \{
(s_1,s_2) : s_1 \le 0 \} $.
\end{proof}
\end{sloppypar}

\subsection{More general than Lipschitz condition}

The notion of a \noisy{\mu} sequence $(X_1,\dots,X_n)$ of (\valued{\R^p}) random
variables (recall Def.~\ref{noisy}), being insensitive to arbitrary permutations
of these random variables, may be thought of as a notion of a \noisy{\mu} family
$(X_i)_{i\in I}$ of random variables, indexed by a finite set $I$. By
Lemma \ref{3.12}, $ \Ex \prod_{i\in I} f_i(X_i) \le \prod_{i\in
I} \sup_{y\in\R^p} \int_{\R^p} f_i(y+z) \, \mu(\D z) $ for all Borel
measurable\footnote{%
 Or Lebesgue measurable, if $\mu$ is absolutely continuous.}
$ f_i : \R^p \to [0,\infty) $. Lemma \ref{3.5} gives
\begin{equation}\label{5.*}
\Ex \exp \sum_{i\in I} | \psi(X_i+Y_i) - \psi(X_i-Y_i) | \le \Ex \exp \sum_{i\in I} g(Y_i) \, ,
\end{equation}
while \eqref{3.6} remains intact.

Now we consider $ G = \phi * w $ as in Prop.~\ref{4.16}, $ \psi : \R \to \R
$ Lebesgue measurable (but generally not Lipschitz),
$ \int_\R \psi(u) \E^{-u^2/2} \, \D u = 0 $, $ X = \psi(G) $; \splt{2}s $
G^{\doT,\doT} $, $ X^{\doT,\doT} $ and leaks $ X^{(1)}, X^{(2)}, X^{(1,2)} $ as
in the proof of Prop.~\ref{4.16}. There, when checking the conditions of
Prop.~\ref{4.4}, the Lipschitz continuity of $\psi$ was needed only for
(a1)--(a4); these have to be checked anew via the ``weaken Lipschitz
condition'' \eqref{3.6}. For ensuring (a1) we just require that
\[
\int \exp \frac1C |\psi(u)| \E^{-u^2/2} \, \D u < \infty \quad \text{for }
C \text{ large enough.}
\]

\begin{lemma}\label{5.3}
For every $ \be \in \{-1,0,1\} $ the two fields $ G^{+,\be} + G^{0,\be} $ and $
G^{+,\be} - G^{0,\be} $ are independent (in both cases, real-valued and
complex-valued).
\end{lemma}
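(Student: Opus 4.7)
The plan is to imitate the proof of Lemma \ref{3.9}, doing the computation one coordinate at a time along the $t_1$-direction, with the second index $\be$ playing a passive (spectator) role. The key point is that in all three cases $\be\in\{-1,0,+1\}$ the two processes $w^{+,\be}$ and $w^{-,\be}$ (as random generalized functions on $\R^2$) are independent white noises on $\R^2$. Indeed, for $\be=\pm1$ they are two of the four independent copies $w^{\al,\be}$; for $\be=0$ we have $w^{\pm,0}_{s_1,s_2}=w^{\pm,\sgn s_2}_{s_1,s_2}$, which, when $(s_1,s_2)$ ranges over $\R^2$, still yields independent copies of white noise (each side of $s_2=0$ is handled by a different pair from the four independent copies, and $w^{+,\be}$ is always ``the $+$ in the first coordinate'' while $w^{-,\be}$ is ``the $-$ in the first coordinate'').

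Next, I would compute $G^{+,\be}_t\pm G^{0,\be}_t$ explicitly, using the identity $w^{0,\be}_{s_1,s_2}=w^{\sgn s_1,\be}_{s_1,s_2}$, exactly as in Lemma \ref{3.9}. The contribution of $\{s_1\ge0\}$ to the difference cancels, while the contribution of $\{s_1<0\}$ combines into $w^{+,\be}-w^{-,\be}$; for the sum, the $\{s_1\ge0\}$ part becomes $2w^{+,\be}$ and the $\{s_1<0\}$ part becomes $w^{+,\be}+w^{-,\be}$. Concretely,
\begin{align*}
G^{+,\be}_t-G^{0,\be}_t &= \int_{s_1<0} \phi(t-s)\bigl(w^{+,\be}_s-w^{-,\be}_s\bigr)\,\D s,\\
G^{+,\be}_t+G^{0,\be}_t &= \int_{s_1<0} \phi(t-s)\bigl(w^{+,\be}_s+w^{-,\be}_s\bigr)\,\D s
 +2\int_{s_1\ge0}\phi(t-s)\,w^{+,\be}_s\,\D s.
\end{align*}

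Now I would invoke the independence of three ingredients: $\bigl(w^{+,\be}_s\bigr)_{s_1\ge0}$, $\bigl(w^{+,\be}_s+w^{-,\be}_s\bigr)_{s_1<0}$, and $\bigl(w^{+,\be}_s-w^{-,\be}_s\bigr)_{s_1<0}$. The first is independent of the other two because white noise has independent restrictions to disjoint half-planes. Within $\{s_1<0\}$, the sum and difference of two independent copies of white noise have equal variance and are uncorrelated, hence (being jointly Gaussian) independent. Therefore $G^{+,\be}-G^{0,\be}$, being measurable with respect to the third, is independent of $G^{+,\be}+G^{0,\be}$, which is measurable with respect to the first and second.

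The complex-valued case is identical: replace $w^{\al,\be}$ by $\wC^{\al,\be}$ and observe that the real and imaginary parts of complex white noise are again independent white noises, so the same uncorrelated-implies-independent Gaussian argument goes through component-wise. There is no real obstacle here; the only thing to be a bit careful about is that the indexing convention $\fnz$ really does give independent $w^{+,\be}$ and $w^{-,\be}$ in the exceptional case $\be=0$, which I addressed above.
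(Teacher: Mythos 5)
Your proof is correct and takes essentially the same approach as the paper: both reduce to the computation of Lemma \ref{3.9} applied along the first coordinate, expressing $G^{+,\be}-G^{0,\be}$ in terms of $(w^{+,\be}-w^{-,\be})_{s_1<0}$ and $G^{+,\be}+G^{0,\be}$ in terms of $(w^{+,\be}+w^{-,\be})_{s_1<0}$ and $(w^{+,\be})_{s_1\ge0}$, then use independence/Gaussianity. Your explicit verification that $w^{+,\be}$ and $w^{-,\be}$ are independent white noises even when $\be=0$ (a point the paper leaves implicit) is a worthwhile addition.
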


\begin{proof}
Similarly to the proof of Lemma \ref{3.9}, $ G^{+,\be} - G^{0,\be} = \phi \star
( w^{+,\be} - w^{0,\be} ) $ is made from $ (w^{+,\be}_{s_1,s_2} -
w^{-,\be}_{s_1,s_2})_{s_1<0} $, while $ G^{+,\be} + G^{0,\be} $ is made from $
(w^{+,\be}_{s_1,s_2} + w^{-,\be}_{s_1,s_2})_{s_1<0} $ and $
(w^{+,\be}_{s_1,s_2})_{s_1\ge0} $.
\end{proof}

The relation ``noisier than'' introduced before Lemma \ref{3.10} for random
processes, generalizes readily to random fields.

\begin{lemma}\label{5.4}
For every $ \be \in \{-1,0,1\} $ the field $ G^{+,\be} + G^{0,\be} $ is noisier
than $ \sqrt2 G^{0,\be} $ (in both cases, real-valued and complex-valued).
\end{lemma}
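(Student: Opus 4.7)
My plan is to run the proof of Lemma \ref{3.10} with the fixed second index $\be$ carried along. The one structural input needed is that, for every $\be \in \{-1,0,+1\}$, the fields $w^{+,\be}$ and $w^{-,\be}$ are independent. For $\be\in\{-1,+1\}$ this is immediate from the hypothesis of Lemma \ref{4.8}. For $\be=0$, unravelling $\fnz$ gives $w^{\pm,0}_{s_1,s_2}=w^{\pm,\sgn s_2}_{s_1,s_2}$, so $w^{+,0}$ is a measurable function of $w^{+,-1},w^{+,+1}$ only, while $w^{-,0}$ is a measurable function of $w^{-,-1},w^{-,+1}$ only; these two groups are disjoint subcollections of the four independent basic noises, hence $w^{+,0}$ and $w^{-,0}$ are independent.

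Having this, I would introduce an independent copy $\ti w^{+,\be}$ of the white noise (independent of all $w^{\al,\be}$) and set
\[
U_t = \sqrt 2 \int_{s_1\ge 0}\!\! \phi(t,s)\, \ti w^{+,\be}_s\, \D s\, ,\qquad V_t = \sqrt 2\, G^{0,\be}_t\, .
\]
Then $U$ is $\ti w^{+,\be}$-measurable and $V$ is measurable with respect to the original noises, so $U$ and $V$ are independent, and $V$ is distributed like $\sqrt 2\, G^{0,\be}$ by construction. It remains to identify $U+V$ in distribution with $G^{+,\be}+G^{0,\be}$.

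To that end I would split the integration of $G^{+,\be}+G^{0,\be}=\phi\star(w^{+,\be}+w^{0,\be})$ at $s_1=0$, use $w^{0,\be}_s=w^{\sgn s_1,\be}_s$, and rewrite it as
\[
\int_{s_1<0}\!\! \phi(t,s)(w^{+,\be}_s+w^{-,\be}_s)\,\D s + \int_{s_1\ge 0}\!\! 2\phi(t,s)w^{+,\be}_s\,\D s \, .
\]
Piece by piece: on $\{s_1<0\}$ the sum $w^{+,\be}+w^{-,\be}$ of two independent white noises is distributed like $\sqrt 2\, w^{-,\be}$, and on $\{s_1\ge 0\}$ the field $2 w^{+,\be}$ is distributed like $\sqrt 2\, w^{+,\be}+\sqrt 2\, \ti w^{+,\be}$. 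Feeding these identifications through $\phi\star(\cdot)$ yields exactly $V_t+U_t$. The complex-valued case is identical with $\wC$ in place of $w$.

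The only conceptual point that is not automatic is the independence $w^{+,\be}\perp w^{-,\be}$ for $\be=0$; once it is granted the argument is a direct transcription of Lemma \ref{3.10}, and the two-dimensional setting introduces no new difficulty.
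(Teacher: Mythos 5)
Your proposal is correct and follows essentially the same approach as the paper, which proves Lemma~\ref{5.4} by the same ``noisifying'' construction as Lemma~\ref{3.10}: introduce an independent copy $\ti w^{+,\be}$ and decompose the noise integral at $s_1=0$. Your explicit verification that $w^{+,0}$ and $w^{-,0}$ are independent (by unfolding $\fnz$ and seeing they depend on disjoint subsets of the four basic noises) is a useful detail that the paper's one-line invocation of the Lemma~\ref{3.10} proof leaves implicit.
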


\begin{proof}
The proof of Lemma \ref{3.10} needs only trivial modifications: independent copy
$ \ti w^{+,\be} $ of $ w^{+,\be} $; $ U_{t_1,t_2} = \sqrt2 \iint_{s_1\ge0}
\phi(t_1-s_1,t_2-s_2) \ti w^{+,\be}_{s_1,s_2} \, \D s_1 \D s_2 $; and so on.
\end{proof}

\begin{lemma}\label{5.5}
(a)
Let $ G = \phi * w $ where $ \phi \in L_2(\R^2) $ satisfies
$ \int_{\R^2} \phi^2(t) \, \D t = 1 $ and $ \esssup_{t\in\R^2}
(1+|t|)^\bal \phi^2(t) < \infty $ for some $ \bal > 4 $. Then for every $ \eps > 0
$ there exists $ C $ (dependent only on $ \eps $, $ \bal $ and the $ \esssup $)
such that for every $ M \ge C $ and every $n$ the array $ \( (1+\eps)G_{kM,\ell
M} \)_{k,\ell\in\{1,\dots,n\}} $ is \noisy{\gR}.
\enlargethispage{14pt}

(b)
Let $ G = \phi * \wC $ where $ \phi \in L_2(\R^2\to\C) $ satisfies
$ \int_{\R^2} |\phi(t)|^2 \, \D t = 1 $ and $ \esssup_{t\in\R^2}
(1+|t|)^\bal |\phi(t)|^2 < \infty $ for some $ \bal > 4 $. Then for every $ \eps > 0
$ there exists $ C $ (dependent only on $ \eps $, $ \bal $ and the $ \esssup $)
such that for every $ M \ge C $ and every $n$ the array $ \( (1+\eps)G_{kM,\ell
M} \)_{k,\ell\in\{1,\dots,n\}} $ is \noisy{\gC}.
\end{lemma}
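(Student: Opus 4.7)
The plan is to adapt the argument of Lemma~\ref{3.11} from the one-dimensional lattice $\{kM\}_k$ to the two-dimensional lattice $\{(kM,\ell M)\}_{k,\ell}$. The overall scheme stays the same: estimate the off-diagonal covariances $\Ex G_0 G_{kM,\ell M}$ (respectively $\Ex G_0 \overline{G_{kM,\ell M}}$ in the complex case), show that their total contribution shrinks as $M\to\infty$, deduce that the Gram matrix of $((1+\eps)G_{kM,\ell M})$ dominates the identity, and then realize this via the addition of an independent Gaussian array distributed as a product of $\gR$'s (or $\gC$'s).

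First I would prove the pointwise decay estimate: for the bound $|\phi(t)|\le A(1+|t|)^{-\bal/2}$ (which we may assume by modifying $A$) and any $r\in\R^2$,
\[
| \Ex G_0 G_r | \le A^2 \int_{\R^2} (1+|s|)^{-\bal/2} (1+|r-s|)^{-\bal/2} \, \D s \le \const \cdot (1+|r|)^{-\bal/2} \, .
\]
The last inequality follows by splitting the integral into $\{|s|\le |r|/2\}$ and $\{|s|>|r|/2\}$; on the first region $|r-s|\ge |r|/2$, on the second $|s|\ge |r|/2$; both halves are bounded by $(1+|r|/2)^{-\bal/2} \int_{\R^2}(1+|s|)^{-\bal/2}\,\D s$, and this last integral is finite precisely because $\bal/2>2$, i.e.\ $\bal>4$. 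This is the spot where the hypothesis $\bal>4$ enters (as opposed to $\bal>2$ in dimension one).

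Next I would show that
\[
\sum_{(k,\ell)\in\Z^2\setminus\{(0,0)\}} \bigl|\Ex G_0 G_{kM,\ell M}\bigr| \le \const \sum_{(k,\ell)\ne(0,0)} \Bigl(1+M\sqrt{k^2+\ell^2}\Bigr)^{-\bal/2} \xrightarrow[M\to\infty]{} 0 \, ,
\]
the convergence again requiring $\bal>4$ (the series with $M=1$ is comparable to $\int_1^\infty r^{-\bal/2}\cdot r\,\D r$, which converges iff $\bal>4$), and the limit being zero by monotone/dominated convergence. Choose $C$ large enough so that this sum is $\le\eps/2$ for all $M\ge C$; by stationarity the same bound holds uniformly when $0$ is replaced by any $(k_0 M,\ell_0 M)$. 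Using $|a_{k,\ell}a_{k',\ell'}|\le\tfrac12(a_{k,\ell}^2+a_{k',\ell'}^2)$ and $\Ex G_0^2=1$, one gets
\[
\Ex \Bigl(\sum_{k,\ell} a_{k,\ell} G_{kM,\ell M}\Bigr)^2 \ge (1-\eps)\sum_{k,\ell} a_{k,\ell}^2 \ge \frac1{(1+\eps)^2}\sum_{k,\ell} a_{k,\ell}^2
\]
(for $\eps\le 1/2$, since $(1+\eps)^2(1-\eps)\ge 1$). Equivalently, the covariance matrix of the array $((1+\eps)G_{kM,\ell M})$ exceeds the identity. Hence there exists a centered Gaussian array $(Y_{k,\ell})$, independent of $G$, with covariance equal to the difference; then $(1+\eps)G_{kM,\ell M}$ is distributed like $Y_{k,\ell}+Z_{k,\ell}$ where $(Z_{k,\ell})$ is i.i.d.\ $\gR$; this is exactly the \noisy{\gR} property.

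The complex-valued case (b) requires only cosmetic changes: replace $\Ex G_0 G_r$ by $\Ex G_0\overline{G_r}=\int\phi(-s)\overline{\phi(r-s)}\,\D s$ (the same pointwise estimate applies to its modulus), use complex coefficients $a_{k,\ell}\in\C$ and the quadratic form $\Ex|\sum a_{k,\ell}G_{kM,\ell M}|^2$, and realize the leftover covariance by a centered complex Gaussian array in the sense of the footnote after Lemma~\ref{3.11}. I do not expect a genuine obstacle; the only subtle point is the sharp threshold $\bal>4$, which is forced by the two integrations in $\R^2$ and is used identically in the pointwise decay estimate and in the lattice summation.
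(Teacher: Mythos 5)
Your proof is correct and follows essentially the same route as the paper's: decay of $|\Ex G_0 G_r|$ via a split of the convolution integral (you split by $|s|\lessgtr|r|/2$, the paper rotates and splits by $\sgn s_1$ — equivalent up to constants), summation over the lattice with $\bal>4$ forcing convergence, and the Gram‑matrix domination argument imported verbatim from Lemma~\ref{3.11}. The paper itself carries out only the covariance-decay estimate explicitly and defers the rest to ``trivial modifications'' of Lemma~\ref{3.11}, exactly as you do.
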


\begin{sloppypar}
\begin{proof}
Similarly to the proof of Lemma \ref{3.11}, $ | \Ex G_{-h} G_h | = | \int_{\R^2} \phi(-h-s)
\phi(h-s) \, \D s | \le A^2 \int_{\R^2} (1+|s+h|)^{-\bal/2} (1+|s-h|)^{-\bal/2} \,
\D s = 2 A^2 \iint_{s_1>0} \( 1+\sqrt{(s_1+|h|)^2+s_2^2} \)^{-\bal/2} \(
1+\sqrt{(s_1-|h|)^2+s_2^2} \)^{-\bal/2}  \, \D s_1 \D s_2 \le 2A^2
(1+|h|)^{-\bal/2} \iint_{\R^2} \( 1+\sqrt{s_1^2+s_2^2} \)^{-\bal/2} \, \D s_1 \D
s_2 = 2A^2 (1+|h|)^{-\bal/2} \int_0^\infty (1+r)^{-\bal/2} 2\pi r \, \D r \le 2A^2
\cdot \frac{2\pi}{\frac\bal2-2} (1+|h|)^{-\bal/2} $. Thus,
\[
\sum_{k,\ell=1}^\infty | \Ex G_{0,0} G_{kM,\ell M} | \le \frac{8\pi
A^2}{\bal-4} \sum_{k,\ell=1}^\infty \Big( 1
+ \frac{M}2 \sqrt{k^2+\ell^2} \Big)^{-\bal/2} \to 0 \quad \text{as }
M \to \infty \, .
\]
The rest of the proof of Lemma \ref{3.11} needs only trivial modifications.
\end{proof}
\end{sloppypar}

Here is a two-dimensional counterpart of \eqref{3.13}:
\begin{multline}\label{5.7}
\Ex \exp \frac1{M^2} \int_{[0,\infty)\times[0,\infty)} | \psi(\rho G^{\al,\be}_t)
 - \psi(\rho G^{0,\be}_t) | \, \D t \le \\
\int_0^1 \int_0^1 \Ex \exp \sum_{k,\ell=0}^\infty
g \Big( \frac \rho 2 \( G^{\al,\be} -
G^{0,\be} \)_{(\theta_1+k)M,(\theta_2+\ell)M} \Big) \, \D\theta_1 \D\theta_2
\end{multline}
for all $ \rho > \sqrt2 $ and all $M$ large enough (according to Lemma \ref{5.5}
for $ \eps = \sqrt2 - 1 $), all $ \al\in\{-1,1\} $ and all $ \be\in\{-1,0,+1\} $; this applies in both
cases, real-valued and complex-valued. Inequality \eqref{5.7} follows
from \eqref{5.*} and Lemmas \ref{5.3}, \ref{5.4}, \ref{5.5} in the same way
as \eqref{3.13} from Lemmas \ref{3.5}, \ref{3.9}, \ref{3.10}, \ref{3.11}; just
replace $ G^+_{kM} $ with $ G^{+,\be}_{kM,\ell M} $; $ \sum_k $ with
$ \sum_{k,\ell} $; $ G^+_{(\theta+k)M} $ with $
G^{+,\be}_{(\theta_1+k)M,(\theta_2+\ell)M} $; $ \int_0^\infty \dots \D t $ with
$ \int_0^\infty \int_0^\infty \dots \D t_1 \D t_2 $; and
$ \int_0^1 \dots \D\theta $ with $ \int_0^1 \int_0^1 \dots \D\theta_1 \D\theta_2
$.

However, the proof of Prop.~\ref{4.16} suggests that the random fields
$G^{+,\beta}-G^{0,\beta}$ should be estimated on the set
$ \{(t_1,t_2):t_1>|t_2|\} $ (see $ \xi_{t_1,t_2}, \eta_{t_1,t_2} $ and $ K_1 $
in the proof of Prop.~\ref{4.16}) rather than $ [0,\infty)\times[0,\infty)
$. Thus, we adapt \eqref{5.7} to a given Lebesgue measurable subset $ A $ of
$ \R^2 $ as follows:
\begin{multline}\label{5.8}
\Ex \exp \frac1{M^2} \int_A | \psi(\rho G^{\al,\be}_t) - \psi(\rho G^{0,\be}_t)
 | \, \D t \le \\
\int_{(0,1)^2} \Ex \exp \sum_{t\in A_\theta} g \Big( \frac \rho 2 \(
 G^{\al,\be}_t - G^{0,\be}_t \) \Big) \, \D\theta \, ,
\end{multline}
where
\[
A_\theta = A \cap M ( \Z^2+\theta ) \quad \text{for } \theta \in (0,1)^2
\]
and $ M(\Z^2+\theta) = \{ (M(k_1+\theta_1),M(k_2+\theta_2)) : k_1,k_2\in\Z \} $
for $ \theta=(\theta_1,\theta_2) $ (and $\Z$ is the set of all integers).

The proof is nearly the same; most of it deals with sums over (finite subsets
of) $ A_\theta $ for a given $\theta$, treating $ A_\theta $ as just a (finite
or) countable subset of $\R^2$; and finally (recall the two displays before \eqref{3.13}) we
note that generally
\begin{multline*}
\int_{(0,1)^2} \sum_{t\in A_\theta} f (t) \, \D\theta =
 \int_{(0,1)^2} \sum_{k\in\Z^2} f (M(k+\theta)) \One_A (M(k+\theta)) \, \D\theta
 = \\
\sum_{k\in\Z^2} \int_{(0,1)^2} \!\!\!\! f (M(k+\theta)) \One_A
 (M(k+\theta)) \, \D\theta = \!
\int_{\R^2} \!\! f(Mt) \One_A(Mt) \, \D t = \frac1{M^2} \! \int_A f(t) \, \D t
\end{multline*}
for measurable $ f : [0,\infty) \to [0,\infty) $; and by convexity of $ \exp $,
\[
\exp \frac1{M^2} \int_A f(t) \, \D t = \exp \int_{(0,1)^2} \sum_{t\in A_\theta}
f (t) \, \D\theta \le \int_{(0,1)^2} \exp \sum_{t\in A_\theta} f
(t) \, \D\theta \, .
\]

Here is a two-dimensional counterpart of Lemma \ref{3.14}.

\begin{lemma}\label{5.9}
(a)
Let $ \phi \in  L_2(\R^2) $ satisfy $ \int_{R^2} \phi^2(t) \, \D t = 1 $ and $
\esssup_{t\in\R^2} (1+|t|)^\bal \phi^2(t) < \infty $ for some $ \bal > 4 $; and $
\psi : \R \to \R $ be Lebesgue measurable, satisfying $ \int_\R \psi(u) \gR(\D u) =
0 $. Then there exists $ C_0 $ (dependent only on $\bal$ and the $\esssup$) such
that for every $ M \ge C_0 $ the following $5=1+4$ conditions are sufficient for
\splittability{M^2} of the random field $ \psi(\phi*w) $:
\[
\tag{1}  \int_{-\infty}^\infty \exp \frac1{M^2} |\psi(u)| \, \gR(\D u) \le 2 \, ;
\]
\[
\tag{$2_1$}  \int_0^1 \Ex \exp \sum_{k=0}^\infty g ( G^{(1)}_{\al(\theta+k)M,0}
) \, \D\theta \le 2 \quad \text{for } \al=\pm1 \, ;
\]
\[
\tag{$2_2$}  \int_0^1 \Ex \exp \sum_{k=0}^\infty g ( G^{(2)}_{0,\be(\theta+k)M}
) \, \D\theta \le 2 \quad \text{for } \be=\pm1 \, ;
\]
\[
\tag{$2_3$} \int_{(0,1)^2} \Ex \exp \sum_{t\in A^{\al,\be}_\theta}
g ( G^{(1)}_t ) \, \D\theta \le  2 \quad \text{for } \al,\be=\pm1 \, ,
\]
where\footnote{%
 $ A^{\al,\be}_\theta = A^{\al,\be} \cap M ( \Z^2+\theta ) $, of course; we do
 not repeat this convention.}
$ A^{\al,\be} = \{ (t_1,t_2) \in \R^2 : |t_1|>|t_2|, \al t_1>0, \be t_2>0 \} $;
\[
\tag{$2_4$} \int_{(0,1)^2} \Ex \exp \sum_{t\in B^{\al,\be}_\theta}
 g ( G^{(2)}_t ) \, \D\theta \le 2 \quad \text{for } \al,\be=\pm1 \, ,
\]
where $ B^{\al,\be} = \{ (t_1,t_2) \in \R^2 : |t_1|<|t_2|, \al t_1>0, \be t_2>0 \} $.

Above, $ G^{(1)}, G^{(2)} $ are as in Lemma \ref{4.10},
and $ g : \R \to \R $ is defined by\footnote{%
 Here $ \gR\(\D(x-h)\) $ means $ (2\pi)^{-1/2} \E^{-(x-h)^2/2} \, \D x $; that
 is, $ \ga_{\scriptscriptstyle\R,h}(\D x) $, where $
 \ga_{\scriptscriptstyle\R,h} $ is $ \gR $ shifted by $ h $.}
\[
\tag{3} g(y) = \sup_{h \in \R} \, \log \int_{-\infty}^\infty \exp 16 \bigg|
\psi \Big( \frac{x+y}{2} \Big) - \psi \Big( \frac{x-y}{2} \Big) \bigg| \,
\gR\(\D(x-h)\) \, .
\]

(b)
The same holds in the complex-valued setup, with the following trivial
modifications: $ \phi \in L_2(\R^2\to\C) $; $|\phi(t)|^2$ (rather than
$\phi^2(t)$); $ \psi : \C \to \R $; $ \int_\C \dots \gC(\dots) $ (rather than $
\int_\R \dots \gR(\dots) $); $ \wC $ (rather
than $w$); $ g : \C \to \R $; and $ h,y \in \C $.
\end{lemma}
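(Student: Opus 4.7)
The plan is to verify the eight conditions of Proposition~\ref{4.4} for the rescaled field $\frac{1}{M^2} X$, using the \splt{2} $X^{\doT,\doT}$ defined by $X^{\al,\be} = \psi(G^{\al,\be})$, where $G^{\doT,\doT}$ is the canonical \splt{2} of $G = \phi*w$ supplied by Lemma~\ref{4.8} and Remark~\ref{4.9}. The stationarity conditions (s1)--(s3) are inherited from $G$ exactly as in the proof of Proposition~\ref{4.16}; the centering (b) follows from $\int \psi \, \D\gR = 0$; and (a1) follows from (1) by convexity of $\exp$ together with stationarity.

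For (a2), Jensen's inequality in the $t_2$-direction combined with (s2) reduces the goal to $\Ex \exp \frac{1}{M^2} \int_\R |X^{(1)}_{t_1,0}| \, \D t_1 \le 2$. Splitting by $\sgn t_1$ costs a factor of $2$ in the exponent, and since $\frac{2}{M^2} \le \frac{1}{M}$ for $M \ge 2$, the 1D tiling identity used in Section~\ref{sect3} reduces each half to $\int_0^1 \Ex \exp \sum_{k\ge0} |\psi(G^{+,+}_{(\theta+k)M,0}) - \psi(G^{0,+}_{(\theta+k)M,0})| \, \D\theta$ (observing that $G^{0,+}$ is distributed like $\phi*w$, since $w^{0,+}$ is itself a white noise, so Lemmas~\ref{5.3}--\ref{5.5} apply to its 1D slice at $t_2=0$). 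Lemma~\ref{3.5}, applied to the pair $(16\psi(\cdot/2), g)$, which satisfies \eqref{3.6} by condition (3), then bounds this by the $(2_1)$-expression, using the harmless $\exp \sum f \le \exp 16 \sum f$ to absorb the factor $16$. Condition (a3) is symmetric.

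The main effort goes into (a4). Writing $a = G^{\sgn t_1,\sgn t_2}_t$, $b = G^{0,\sgn t_2}_t$, $c = G^{\sgn t_1,0}_t$, $d = G^{0,0}_t$, so that $X^{(1,2)}_t = \psi(a) - \psi(b) - \psi(c) + \psi(d)$, I would use Remark~\ref{4.6a} to bound $|X^{(1,2)}_t| \le |\psi(a) - \psi(b)| + |\psi(c) - \psi(d)|$ on $\{|t_1|>|t_2|\}$ and by $|\psi(a) - \psi(c)| + |\psi(b) - \psi(d)|$ on $\{|t_1|<|t_2|\}$. Three successive applications of $\exp(x+y) \le \frac12(\exp 2x + \exp 2y)$, one for the $|t_1| \gtrless |t_2|$ dichotomy, one for the two $\psi$-differences in each region, and one for the four sign-quadrants $A^{\al,\be}$ and $B^{\al,\be}$, pump the factor inside the exponent from $\frac{1}{M^2}$ up to $\frac{16}{M^2}$ and reduce (a4) to estimating each of sixteen pieces of the form
\[
\Ex \exp \frac{16}{M^2} \int_{A^{\al,\be}} |\psi(G^{\al,\be}_t) - \psi(G^{0,\be}_t)| \, \D t \le 2,
\]
and analogously on $B^{\al,\be}$ (with $G^{\al,0}, G^{0,0}$ in place of $G^{\al,\be}, G^{0,\be}$). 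Each such piece is handled by the 2D tiling estimate \eqref{5.8} with $\rho = 2$ applied to the rescaled function $\tilde\psi(u) = 16 \psi(u/2)$: since $(\tilde\psi, g)$ satisfies \eqref{3.6} by (3), and Lemmas~\ref{5.3}--\ref{5.5} supply the independence and \noisy{\gR} structure of the array $((G^{\al,\be} + G^{0,\be})_{kM,\ell M})_{k,\ell}$, the bound becomes $\int_{(0,1)^2} \Ex \exp \sum_{t \in A^{\al,\be}_\theta} g(G^{\al,\be}_t - G^{0,\be}_t) \, \D\theta$. On $A^{\al,\be}$ one has $\sgn t_1 = \al$, so $G^{\al,\be}_t - G^{0,\be}_t$ is a realization of $G^{(1)}_t$, and the whole expression is $\le 2$ by $(2_3)$; the pieces on $B^{\al,\be}$ are treated symmetrically via $(2_4)$.

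The main obstacle is the exponent bookkeeping in (a4): one has to split $\R^2$ and the four-term difference into pieces fine enough that each can be handled by the available noisiness-plus-weakened-Lipschitz machinery, and the $\exp 16$ in condition (3) is chosen precisely so that the accumulated triangle-inequality losses ($2 \times 2 \times 4 = 16$) are absorbed exactly. Part (b) of the lemma requires only the obvious notational substitutions ($\R \to \C$, $w \to \wC$, $\gR \to \gC$, and $\phi^2 \to |\phi|^2$).
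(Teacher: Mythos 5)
Your proposal is correct and follows essentially the same route as the paper's proof: the \splt{2} from Lemma~\ref{4.8} and Remark~\ref{4.9}, conditions (s1)--(s3), (b), (a1) handled exactly as in the proof of Prop.~\ref{4.16}, (a2)--(a3) via the one-dimensional Jensen/tiling argument together with Lemma~\ref{3.5} applied to $\psi_{16}(x)=16\psi(x/2)$, and (a4) via Remark~\ref{4.6a}'s $\min$ bound, three iterated convexity splits accumulating the factor $16$, the two-dimensional tiling estimate \eqref{5.8}, and conditions $(2_3)$, $(2_4)$. Two small slips, neither affecting the argument: the parenthetical after ``analogously on $B^{\al,\be}$'' should refer to second-coordinate differences ($G^{\al,\be}-G^{\al,0}$ and $G^{0,\be}-G^{0,0}$, i.e.\ realizations of $G^{(2)}$, matched to $(2_4)$) rather than to $G^{\al,0},G^{0,0}$ --- as you in fact correctly indicate in your final clause; and your bound $\frac2{M^2}\le\frac1M$ requires you to declare $C_0\ge 2$, whereas the paper's bookkeeping uses $\frac2{M^2}\le\frac{16}M$, which only needs $C_0\ge 1$.
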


\begin{sloppypar}
\begin{proof}
\textsc{The real-valued case} (a).
Given $\bal$ and the $\esssup$, Lemma \ref{5.5} for $ \eps = \sqrt2 - 1 $ gives
$C_0$ such that the two conditions on $\phi$ ensure \noisiness{\gR} of the
two-dimensional array $ ( \sqrt2 G_{kM,\ell M} )_{k,\ell\in\{1,\dots,n\}} $ for
all $ M \ge C_0 $ and all $n$. WLOG, $ C_0 \ge 1 $.

Then \noisiness{\gR} of $ ( \sqrt2 G_{(k_1+\theta_1)M,(k_2+\theta_2)M}
)_{k_1,k_2\in\{-n,-n+1,\dots,n\}} $
is ensured as well (due to stationarity of $ G = \phi * w $ and arbitrariness on
$n$). Also one-dimensional arrays $ ( \sqrt2 G_{(k_1+\theta_1)M,t_2}
)_{k_1\in\{-n,\dots,n\}} $ and $ ( \sqrt2 G_{t_1,(k_2+\theta_2)M}
)_{k_2\in\{-n,\dots,n\}} $ are \noisy{\gR} for all $t_1,t_2$ (each being a part
of a noisy two-dimensional array).

By Lemma \ref{5.4} we may replace $ \sqrt2 G $ with $ G^{\al,\be} + G^{0,\be}
$ in all these arrays. Similarly, we may use $ G^{\al,\be} + G^{\al,0} $.

We introduce $ \psi_{16} : \R \to \R $ by $ \psi_{16}(x) = 16\psi\(\frac{x}2\) $ and
use it instead of $\psi$ in \eqref{5.8}, with $ \rho=2 $, taking into account
that $ (G^{\al,\be}_t-G^{0,\be}_t)_{t\in A^{\al,\be}} $ is distributed like $
(G^{(1)}_t)_{t\in A^{\al,\be}} $ irrespective of $ \be \in \{-1,0,1\} $; we get
\begin{align}\label{5.10a}
& \Ex \exp \frac{16}{M^2} \int_{A^{\al,\be}} | \psi(G^{\al,\be}_t)
 - \psi(G^{0,\be}_t) | \, \D t \le \text{LHS}(2_3) \le 2 \, , \\
\label{5.10b}
& \Ex \exp \frac{16}{M^2} \int_{A^{\al,\be}} | \psi(G^{\al,0}_t)
 - \psi(G^{0,0}_t) | \, \D t \le \text{LHS}(2_3) \le 2
\end{align}
(here ``$ \text{LHS}(2_3) $'' means the left-hand side of $(2_3)$),
since \eqref{3.6} is satisfied by $ \psi_{16} $ and $g$ due to (3). The
inequality above results from \noisiness{\gR} of the array $
(G^{\al,\be}+G^{0,\be})|_I $ for every finite $ I \subset A^{\al,\be}_\theta
$.

Using one-dimensional arrays and \eqref{3.13} (with $\psi_{16}$ instead of
$\psi$, and $ \rho=2 $) we get
\begin{equation}\label{5.11}
\Ex \exp \frac{16}M \int_{\al t>0} | \psi(G^{\al,\be}_{t_1,0})
 - \psi(G^{0,\be}_{t_1,0}) | \, \D t_1 \le \text{LHS}(2_1) \le 2 \, .
\end{equation}
This inequality is obtained for all $ \be \in \{-1,0,1\} $, but note that the
left-hand side does not depend on $\be$, since the distribution of
the \component{3} random field $ (G^{0,\be},G^{-,\be},G^{+,\be}) = G^{\doT,\be}
$ does not depend on $\be$, and $G^{\doT,*}$ is just something distributed like
$ G^{\doT,\be} $ for some $\be$, therefore, for all $\be$ (recall
Sect.~\ref{4a}, near the pictures); thus, we may write ``$*$'' instead of
``$\be$'' in \eqref{5.11}.

We inspect Conditions (a1)--(a4) of
Prop.~\ref{4.4} (for $ C = M^2 \ge 1 $), since the other conditions, (s1)--(s3)
and (b), are ensured similarly to the proof of Prop.~\ref{4.16}, as noted before
Lemma~\ref{5.3}. Recall that $ X^{\al,\be} = \psi(G^{\al,\be}) $ and
\begin{gather*}
X^{(1)}_{t_1,t_2} = X^{\sgn t_1,*}_{t_1,t_2} - X^{0,*}_{t_1,t_2} \, ; \quad
 X^{(2)}_{t_1,t_2} = X^{*,\sgn t_2}_{t_1,t_2} - X^{*,0}_{t_1,t_2} \, ; \\
X^{(1,2)}_{t_1,t_2} = X^{\sgn t_1,\sgn t_2}_{t_1,t_2} - X^{0,\sgn t_2}_{t_1,t_2}
 - X^{\sgn t_1,0}_{t_1,t_2} + X^{0,0}_{t_1,t_2} \, ,
\end{gather*}
as in the proof of Prop.~\ref{4.16} (this is irrelevant to (a1), but relevant to
(a2), (a3), (a4)).

(a1) $ \exp \frac1{M^2} \int_{[0,1)\times[0,1)} |X_t| \, \D t \le
\int_{[0,1)\times[0,1)} \exp \frac1{M^2} |X_t| \, \D t $, and for every $t$,
$ \Ex \exp \frac1{M^2} |X_t| = \int_\R \exp \frac1{M^2} |\psi(u)| \, \gR(\D
u) \le 2 $ by (1).

(a2) First, $ \exp \frac1{M^2} \int_0^1 \( \int_\R |X^{(1)}_{t_1,t_2}| \, \D
t_1 \) \, \D t_2 \le \int_0^1 \exp \( \frac1{M^2} \int_\R
|X^{(1)}_{t_1,t_2}| \, \D t_1 \) \, \D t_2 $;
second, $ \exp \frac1{M^2} \int_\R |X^{(1)}_{t_1,t_2}| \, \D t_1
= \exp \frac12 \( \frac2{M^2} \int_{-\infty}^0 | X^{-,*}_{t_1,t_2} -
X^{0,*}_{t_1,t_2} | \, \D t_1 + \frac2{M^2} \int_0^\infty | X^{+,*}_{t_1,t_2} -
X^{0,*}_{t_1,t_2} | \, \D t_1 \) \le \frac12 \sum_\al \exp \frac2{M^2} \int_{\al
t>0} | X^{\al,*}_{t_1,t_2} - X^{0,*}_{t_1,t_2} | \, \D t_1 $;
take the expectation (it does not depend on $t_2$ due to \ref{4.4}(s2)),
use \eqref{5.11} and note that $ \frac2{M^2} \le \frac{16}M $.

(a3) is similar to (a2); swap the coordinates $t_1,t_2$ (and the corresponding
upper indices), and modify \eqref{5.11} using $(2_2)$ instead of $(2_1)$. 

(a4) Similarly to the proof of Prop.~\ref{4.16},
\[
|X^{(1,2)}_{t_1,t_2}| \le
\begin{cases}
 |\psi(a)-\psi(b)| + |\psi(c)-\psi(d)| &\text{when } |t_1|>|t_2|,\\
 |\psi(a)-\psi(c)| + |\psi(b)-\psi(d)| &\text{when } |t_1|<|t_2|,
\end{cases}
\]
where $ a = G^{\sgn t_1,\sgn t_2}_{t_1,t_2} $, $ b = G^{0,\sgn t_2}_{t_1,t_2} $,
$ c = G^{\sgn t_1,0}_{t_1,t_2} $, $ d = G^{0,0}_{t_1,t_2} $. We have
$ \exp \frac4{M^2} \iint_{|t_1|>|t_2|} |\psi(c)-\psi(d)|  \, \D t_1 \D
t_2 = \exp \frac14 \sum_{\al,\be} \frac{16}{M^2} \int_{A^{\al,\be}}
| \psi(G_t^{\al,0}) - \psi(G_t^{0,0}) | \, \D
t \le \frac14 \sum_{\al,\be} \exp \frac{16}{M^2} \int_{A^{\al,\be}} \dots $;
by \eqref{5.10b},
\[
\Ex \exp \frac4{M^2} \iint_{|t_1|>|t_2|} |\psi(c)-\psi(d)| \, \D t_1 \D
t_2 \le 2 \, .
\]
Similarly, using \eqref{5.10a},
\[
\Ex \exp \frac4{M^2} \iint_{|t_1|>|t_2|} |\psi(a)-\psi(b)| \, \D t_1 \D
t_2 \le 2
\]
(just consider $ \int_{A^{\al,\be}} | \psi(G_t^{\al,\be}) - \psi(G_t^{0,\be})
| \, \D t $).

Further,
$ \exp \frac2{M^2} \iint_{|t_1|>|t_2|} |X^{(1,2)}_{t_1,t_2}| \, \D t_1 \D
t_2 \le
\exp \frac12 \( \frac4{M^2} \iint_{|t_1|>|t_2|} |\psi(a)-\psi(b)| \, \D t_1 \D
t_2 +
\frac4{M^2} \iint_{|t_1|>|t_2|} |\psi(c)-\psi(d)| \, \D t_1 \D t_2 \) \le
\frac12 \exp \frac4{M^2} \iint_{|t_1|>|t_2|} |\psi(a)-\psi(b)| \, \D t_1 \D
t_2 +
\frac12 \exp \frac4{M^2} \iint_{|t_1|>|t_2|} |\psi(c)-\psi(d)| \, \D t_1 \D
t_2 $, thus,
\[
\Ex \exp \frac2{M^2} \iint_{|t_1|>|t_2|} |X^{(1,2)}_{t_1,t_2}| \, \D t_1 \D
t_2 \le 2 \, .
\]
Similarly, using ($2_4$) instead of ($2_3$),
\[
\Ex \exp \frac2{M^2} \iint_{|t_1|<|t_2|} |X^{(1,2)}_{t_1,t_2}| \, \D t_1 \D
t_2 \le 2 \, .
\]
Finally,
$ \exp \frac1{M^2} \int_{\R^2} |X^{(1,2)}_t| \, \D t
= \exp \frac12 \( \frac2{M^2} \iint_{|t_1|>|t_2|} \dots
+ \frac2{M^2} \iint_{|t_1|<|t_2|} \dots \) \le \frac12 \exp \frac2{M^2} \iint_{|t_1|>|t_2|} \dots
+ \frac12 \exp \frac2{M^2} \iint_{|t_1|<|t_2|} \dots $, thus,
\[
\Ex \exp \frac1{M^2} \int_{\R^2} |X^{(1,2)}_t| \, \D t \le 2 \, .
\]

\textsc{The complex-valued case} (b) is quite similar.
\end{proof}
\end{sloppypar}

\begin{remark}\label{5.13}
Replacing ``$\le2$'' with the weaker ``$<\infty$'' in ($1$), ($2_1$), \dots,
($2_4$) and waiving the coefficient $16$ in (3), we still get splittability
(rather than \splittability{C} with the given $C$, of course). This applies to
both cases, (a) and (b). (The proof given in Remark \ref{3.15a} needs only
trivial modifications.)
\end{remark}

Here is a counterpart of Prop.~\ref{3.155}.

\begin{theorem}\label{5.14}
(a)
Let positive numbers $ C, \bal, \de $ satisfy $ \de \le 1 $ and $ \bal >
2(1+\frac2\de) $. Let $ \phi \in L_2(\R^2) $ satisfy $ \int_{\R^2} \phi^2(t) \, \D
t = 1 $ and $ \esssup_{t\in\R^2} (1+|t|)^\bal \phi^2(t) < \infty $. Let a Lebesgue
measurable function $ \psi : \R \to \R $ satisfy $ \int_\R \exp \( \frac1C
|\psi(u)| \) \E^{-u^2/2} \, \D u < \infty $, $ \int_\R \psi(u) \E^{-u^2/2} \, \D u = 0 $, and
\[
\log \int_\R \exp \bigg| \psi \Big( \frac{x+h+y}{2} \Big) - \psi \Big(
\frac{x+h-y}{2} \Big) \bigg| \cdot \frac1{\sqrt{2\pi}} \E^{-x^2/2} \, \D x \le C|y|^\de
\]
for all $ y,h \in \R $. Then the following random field is splittable:
\[
X = \psi(\phi*w) \, , \quad \text{that is,} \quad X_t = \psi \bigg( \int_{\R^2}
\phi(t-s) w(s) \, \D s \bigg) \, .
\]

(b)
Let positive numbers $ C, \bal, \de $ satisfy $ \de \le 1 $ and $ \bal >
2(1+\frac2\de) $. Let $ \phi \in L_2(\R^2\to\C) $ satisfy $ \int_\R |\phi(t)|^2 \, \D
t = 1 $ and $ \esssup_{t\in\R^2} (1+|t|)^\bal |\phi(t)|^2 < \infty $. Let a Lebesgue
measurable function $ \psi : \C \to \R $ satisfy $ \int_\C \exp \( \frac1C
|\psi(z)| \) \E^{-|z|^2} \, \D z < \infty $, $ \int_\C \psi(z) \E^{-|z|^2} \, \D z = 0 $, and
\[
\log \int_\C \exp \bigg| \psi \Big( \frac{x+h+y}{2} \Big) - \psi \Big(
\frac{x+h-y}{2} \Big) \bigg| \cdot \frac1\pi \E^{-|x|^2} \, \D x \le C|y|^\de
\]
for all $ y,h \in \C $. Then the following random field is splittable:
\[
X = \psi(\phi*\wC) \, , \quad \text{that is,} \quad X_t = \psi \bigg( \int_{\R^2}
\phi(t-s) \wC(s) \, \D s \bigg) \, .
\]
\end{theorem}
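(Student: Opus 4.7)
The plan is to apply Lemma~\ref{5.9}(a) combined with Remark~\ref{5.13}, reducing the problem to verifying the five conditions (1), $(2_1)$, $(2_2)$, $(2_3)$, $(2_4)$ of Lemma~\ref{5.9} with ``$\le 2$'' weakened to ``$<\infty$'' and the factor $16$ in definition (3) of $g$ suppressed. Condition (1) is immediate from the exponential-integrability hypothesis on $\psi$. The smeared-H\"older bound on $\psi$ (after absorbing the shift $h$ into the variable of integration) gives exactly $g(y) \le C|y|^\de$, so each $(2_j)$ becomes the claim that an exponential moment of a sum $\sum_{t\in\Lambda} C|G^{(k)}_t|^\de$ over a countable point set $\Lambda$ is finite, uniformly in the lattice-shift parameter $\theta$.

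To estimate these exponential moments I would apply Lemma~\ref{3.2.6} (in the counting-measure form permitted by Remark~\ref{3.2.7}) to the centered Gaussian family $(G^{(k)}_t)_{t\in\Lambda}$. This reduces each condition to showing
\[
\sup_\theta \sum_{t \in \Lambda(\theta)} \(\Ex (G^{(k)}_t)^2\)^{\de/2} < \infty,
\]
where $\Lambda(\theta)$ is a half-line lattice for $(2_1)$, $(2_2)$ and the cone-intersected two-dimensional lattice $A^{\al,\be}_\theta$ or $B^{\al,\be}_\theta$ for $(2_3)$, $(2_4)$. By Lemma~\ref{4.10}(a), $G^{(k)}$ is distributed as $\phi^{(k)}\star w$; the support constraint $\sgn s_k \ne \sgn t_k$ together with the polynomial decay $\phi^2(t) \le \const\, (1+|t|)^{-\bal}$ yields the pointwise bound
\[
\Ex (G^{(k)}_t)^2 \le \const \int_{K_2} (1+|t-s|)^{-\bal} \, \D s \quad \text{for } t \in K_1,
\]
where $K_1$ is the closure of the relevant cone and $K_2$ is the corresponding closed half-plane, with $K_1 \cap K_2 = \{0\}$ and both closed under positive scaling.

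The one-dimensional sums in $(2_1)$, $(2_2)$ then follow (as in the proof of Proposition~\ref{3.155}) from the summability $\sum_k (1+kM)^{-(\bal-2)\de/2} < \infty$, which is already ensured under the weaker constraint $\bal > 2+\frac2\de$. For the two-dimensional lattice sums in $(2_3)$, $(2_4)$, the substitution $t = M\tau$, $s = Mu$ rescales $A^{\al,\be}_\theta$ onto $A^{\al,\be} \cap (\Z^2+\theta)$, and since $M \ge 1$ the resulting integral bound is only tightened; the required uniform-in-$\theta$ summability is then delivered by Remark~\ref{4.13c} under the precise threshold $\bal > 2(1+\frac2\de)$. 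The complex-valued case (b) proceeds identically after the cosmetic substitutions ($\wC$, $\gC$, $|\phi|^2$, $\psi : \C \to \R$) listed in Lemma~\ref{5.9}(b).

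The main obstacle, as in the proof of Proposition~\ref{4.12}, is the two-dimensional cone-lattice estimate needed for $(2_3)$ and $(2_4)$: the relevant sums are not over ordinary half-planes but over half-cones whose closure meets the support half-plane of $\phi^{(k)}$ only at the origin. This cone geometry, captured by Lemma~\ref{4.13} and its sum-version Remark~\ref{4.13c}, is precisely what forces the sharper threshold $\bal > 2(1+\frac2\de)$ and distinguishes the two-dimensional setup from the one-dimensional threshold $\bal > 1+\frac2\de$ of Proposition~\ref{3.155}.
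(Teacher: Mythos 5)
Your proposal is correct and follows essentially the same route as the paper's proof: reduction via Lemma \ref{5.9} and Remark \ref{5.13}, then Lemma \ref{3.2.6} (counting-measure form) to bound the exponential moments by weighted variance sums, Lemma \ref{4.10}(a) to express $G^{(k)}$ via $\phi^{(k)}\star w$, and finally Remark \ref{4.13c} applied to the cone $K_1$ and half-plane $K_2$. Your side observation that the one-dimensional conditions $(2_1),(2_2)$ would already hold under the weaker threshold $\bal>2+\frac2\de$ is accurate but not needed, since the paper simply applies Remark \ref{4.13c} with $K_1$ taken as a half-line.
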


\begin{proof}
\textsc{The real-valued case} (a) According to Lemma~\ref{5.9} and
Remark \ref{5.13}, it is sufficient to prove Conditions ($2_1$), \dots,
($2_4$) of Lemma~\ref{5.9}
with ``$<\infty$'' instead of ``$\le2$'', and waiving ``$16$'' in (3). Given $
g(y) \le C|y|^\de $, we check that
\[ \tag{$2'_1$}
\int_0^1 \Ex \exp C \sum_{k=0}^\infty |
G^{(1)}_{\al(\theta+k)M,0} |^\de \, \D\theta < \infty \, ,
\]
and similarly, ($2'_2$), ($2'_3$), ($2'_4$).

Applying Lemma \ref{3.2.6} to the set $ \{ (\al(\theta+k)M,0) : k=0,1,\dots \} $
treated as a countable measure space with the counting measure (according to
Remark \ref{3.2.7}) we reduce ($2'_1$) to
\[
\tag{$2''_1$} \esssup_{\theta\in(0,1)} \sum_{k=0}^\infty \( \Ex (
G^{(1)}_{\al(\theta+k)M,0} )^2 \)^{\de/2} < \infty \, .
\]
Likewise, ($2'_3$) reduces to
\[
\tag{$2''_3$} \esssup_{\theta\in(0,1)^2} \sum_{t\in A^{\al,\be}_\theta} \( \Ex (
G^{(1)}_t )^2 \)^{\de/2} < \infty \, ;
\]
and ($2'_2$), ($2'_4$) reduce similarly.

We proceed as in (the end of) the proof of Prop.~\ref{4.16}. By
Lemma \ref{4.10}(a), $ G^{(1)} $ is distributed like $ \phi^{(1)} \star w $;
thus $ \Ex (G_{t_1,t_2}^{(1)})^2 = \iint_{\R^2} \( \phi^{(1)}
(t_1,s_1,t_2-s_2) \)^2 \, \D s_1 \D s_2 = 2 \iint_{\sgn s_1 \ne \sgn
t_1} \phi^2(t_1-s_1,t_2-s_2) \, \D s_1 \D s_2 $, which reduces $2''_3$ to
\[
\tag{$2'''_3$} \esssup_{\theta\in(0,1)^2} \sum_{t\in A^{\al,\be}_\theta} \Big(
\iint_{\sgn s_1 \ne \sgn t_1} \!\!\!\!\!\!\!\!
\( 1 + \sqrt{ (t_1-s_1)^2 + (t_2-s_2)^2 } \)^{-\bal} \, \D s_1 \D
s_2 \Big)^{\de/2} < \infty \, ;
\]
this inequality is ensured by Remark~\ref{4.13c} (to Lemma~\ref{4.13}) applied to
$ K_1 = \{ (t_1,t_2) \in \R^2 : |t_1| \ge |t_2|, \al t_1 \ge 0, \be t_2 \ge 0 \} $,
$ K_2 = \{ (s_1,s_2) \in \R^2 : \al s_1 \le 0 \} $. The same argument gives
$2''_1$; just take $ K_1 = \{ (t_1,0) : \al t_1 \ge 0 \} $.

Thus, $2_1$ and $2_3$ are proved; for $2_2$ and $2_4$ swap the coordinates
$t_1,t_2$ (and the corresponding upper indices).

\textsc{The complex-valued case} (b) is quite similar.
\end{proof}

\subsection{More general than stationarity}
\label{5c}

It happens sometimes that a complex-valued random field $X$ is not stationary,
and nevertheless the field $ \psi(X) $ is stationary (since $\psi$ need not be
one-to-one).

Here is a generalization of Prop.~\ref{4.4}. Denote the circle $ \{ z \in \C :
|z|=1 \} $ by $ \T $.

\begin{proposition}\label{5.15}
Let $ X^{\doT,\doT} $ be a \splt{2} of a complex-valued random field $ X $ on $ \R^2 $. Then
the following $8=3+4+1$ conditions (together; on $X$ and $X^{(1)}$, $X^{(2)}$,
$X^{(1,2)}$ introduced before) are sufficient for \splittability{C} of $X$:
\begin{description}
\item[\rm(s1)] there exists a Borel measurable function $ \eta : \R^2 \times
\R^2 \to \T $ such that for every $ r \in \R^2 $ the random field $ ( \eta(r,t)
X_{t+r} )_{t\in\R^2} $ is distributed like $X$;
\item[\rm(s2)] there exists a Borel measurable function $ \eta_1 : \R \times
\R^2 \to \T $ such that for every $ r \in \R $ the random field
$ ( \eta_1(r;t_1,t_2 ) X^{(1)}_{t_1,t_2+r})_{t_1,t_2} $ is distributed like
$X^{(1)}$;
\item[\rm(s3)] there exists a Borel measurable function $ \eta_2 : \R \times
\R^2 \to \T $ such that for every $ r \in \R $ the random field
$ ( \eta_2(r;t_1,t_2 ) X^{(2)}_{t_1+r,t_2})_{t_1,t_2} $ is distributed like
$X^{(2)}$;
\end{description}

(a1)--(a4), (b) as in Prop.~\ref{4.4}.
\end{proposition}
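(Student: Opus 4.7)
The plan is to mirror the proof of Proposition~\ref{4.4}, replacing ``stationarity'' by ``stationarity up to multiplication by a unimodular function''. The key observation is that conditions (a1)--(a4) depend on the fields only through pointwise absolute values, so they are unaffected by pointwise multiplication by any Borel measurable $\T$-valued function; and condition (b), being linear in $X_t$, transfers under shifts by taking expectations in the identity provided by (s1).

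First I would record a small functoriality step. For a Borel measurable $\mu : \R^2 \to \T$, the pointwise-multiplication map $M_\mu$ is a local linear \transformation{p,p}; so by \eqref{4.2} and \eqref{4.25}, applying $M_\mu$ componentwise to a split (resp.\ to a \splt{2}) of $X$ yields a split (resp.\ a \splt{2}) of $M_\mu X$. Because $M_\mu$ is local and linear (compare the discussion before \eqref{2.15} and \cite[Lemma~2.2]{IV}), the leaks of the transformed \splt{2} are simply $M_\mu$ applied to the leaks $X^{(1)},X^{(2)},X^{(1,2)}$ of the original \splt{2}; hence, since $|\mu(t)|=1$, these transported leaks have the same pointwise absolute values as $X^{(1)},X^{(2)},X^{(1,2)}$.

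Next I would verify \cite[Def.~1.3]{IV} by running the proof of Prop.~\ref{4.4} essentially verbatim, with the following substitution. Wherever that proof said ``WLOG $r=0$ due to (s$k$)'' for $k\in\{1,2,3\}$, I replace the shifted field $(X_{t+r})_t$ (resp.\ $(X^{(1)}_{t_1,t_2+r})_{t_1,t_2}$, $(X^{(2)}_{t_1+r,t_2})_{t_1,t_2}$) by its phase-twisted version $(\bar\eta(r,t) X_t)_t$ (resp.\ $\bar\eta_1(r,\cdot) X^{(1)}$, $\bar\eta_2(r,\cdot) X^{(2)}$) using the relevant (s$k$), and then transport the existing \splt{2} of $X$ (resp.\ the existing split of $X^{(1)}$, of $X^{(2)}$) through the corresponding multiplication operator. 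By the functoriality step, the resulting leaks have the same absolute values as $X^{(1)},X^{(2)},X^{(1,2)}$, so (a2)--(a4) still bound the relevant exponential moments. For \cite[Def.~1.3(a)]{IV} over an arbitrary unit square $[s_1,s_1+1)\times[s_2,s_2+1)$, (s1) together with phase-invariance of $|\cdot|$ gives that $(|X_{t+s}|)_t$ is distributed like $(|X_t|)_t$, so (a1) suffices; similarly for the analogous one-dimensional integrals via (s2), (s3). Condition \cite[Def.~1.3(b)]{IV} follows from $\Ex X_t = 0$: under shifts, $\eta(r,t)\Ex X_{t+r} = \Ex X_t = 0$ together with $|\eta|=1$ forces $\Ex X_{t+r} = 0$, and linearity of the leak construction propagates $\Ex = 0$ to $X^{(1)},X^{(2)},X^{(1,2)}$.

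The main obstacle is the functoriality step: one must verify that forming a split (and a \splt{2}) and extracting a leak commute with the local linear transformation $M_\mu$, so that, if $(X^0,X^-,X^+)$ is a split of $X$, then $(M_\mu X^0, M_\mu X^-, M_\mu X^+)$ is a split of $M_\mu X$ with leak $M_\mu X^{(1)}$, and analogously for \splt{2}s and their leaks along both coordinate directions. This is the two-dimensional analogue of the linearity-and-locality principle for $F(X) = fX$ noted before \eqref{2.15}. Once this is in place, the rest of the proof of Prop.~\ref{5.15} is a mechanical replay of the proof of Prop.~\ref{4.4} with phase twists inserted at each reduction to $r=0$.
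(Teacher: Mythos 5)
Your proposal takes essentially the same route as the paper. What you call the ``functoriality step'' is precisely the content of Lemma~\ref{5.16} in the paper: since pointwise multiplication by a Borel $\T$-valued function is a local linear transformation, it carries splits to splits (and $2$-splits to $2$-splits), the corresponding leaks are the original leaks multiplied by the same function, and absolute values are unchanged. The observation that (a1)--(a4) depend on $X$, $X^{(1)}$, $X^{(2)}$, $X^{(1,2)}$ only through their absolute values, and (b) is linear, is also exactly what the paper uses.

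One detail is worth flagging. When you reach the outer step of \splittability{(1,2)} (Case $i=1$ or $i=2$ of \cite[Def.~1.3(c)]{IV}), it is not enough to know that the transported leak $\overline{\eta(r,\cdot)}X^{(1)}$ has the same pointwise modulus as $X^{(1)}$ and hence that (a2)--(a4) still hold: the definition requires the (swapped) leak itself to be \splittable{(1,1)}, a nested structural condition, not merely an exponential-moment bound. The paper handles this by observing that multiplication by a unimodular function preserves \splittability{(1,1)} and citing \cite[Prop.~2.1]{IV} for that fact (generalized to the complex-valued setup). Your general principle (locality and linearity of $M_\mu$) does cover this---you can run through the definition of \splittability{(1,1)} for $\overline{\eta(r,\cdot)}X^{(1)}$, absorbing the inner shift into yet another unimodular factor via (s2) and transporting splits accordingly---but as written your proposal stops at the exponential-moment bounds, so either spell that out or cite \cite[Prop.~2.1]{IV} explicitly.
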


\begin{lemma}\label{5.16}
(a) If $X$ satisfies \ref{5.15}(s1) and $Y$ is a leak for $X$, then for every $
r \in \R^2 $ the field $ \overline{\eta(r,\cdot)} Y
= \( \overline{\eta(r,t)} Y_t \)_t $ is a leak for $ (X_{t+r})_t $.

(b) If $X$ satisfies \ref{5.15}(s2) and $Y$ is a leak for
$(X^{(1)}_{t_2,t_1})_{t_1,t_2}$, then for every $ r \in \R $ the field
$ \( \overline{\eta_1(r;t_2,t_1)} Y_{t_1,t_2} \)_{t_1,t_2} $ is a
leak for $ (X^{(1)}_{t_2,t_1+r})_{t_1,t_2} $.

(c) If $X$ satisfies \ref{5.15}(s3) and $Y$ is a leak for
$(X^{(2)}_{t_1,t_2})_{t_1,t_2}$, then for every $ r \in \R $ the field
$ \overline{\eta_2(r,\cdot)} Y = \( \overline{\eta_2(r,t)} Y_t \)_t $ is a 
leak for $ (X^{(2)}_{t_1+r,t_2})_{t_1,t_2} $.
\end{lemma}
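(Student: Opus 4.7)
My plan exploits two ingredients. First, multiplying a (complex-valued) random measure by a deterministic bounded Borel function of modulus one is a linear and local \transformation{1,1}, so by the discussion of the multiplicative transformation $F(X)=fX$ in Sect.~\ref{sect2}, such a multiplication commutes with the leak operation. Second, by (s1)--(s3), the unit-modulus multipliers $\overline{\eta(r,\cdot)}$, $\overline{\eta_1(r;\cdot)}$, $\overline{\eta_2(r;\cdot)}$ are designed precisely to realize the distributional equality between a shifted field and a multiplicatively twisted unshifted field.

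For part (a), I pick a split $(X^0,X^-,X^+)$ of $X$ whose leak $Y_0$ is distributed like $Y$, and introduce $F\colon\mu\mapsto\overline{\eta(r,\cdot)}\,\mu$. By \eqref{4.2}, the triple $(F(X^0),F(X^-),F(X^+))$ is a split of $F(X)=(\overline{\eta(r,t)}X_t)_t$; since $F$ is linear and local, the leak of this new split equals $F(Y_0)=(\overline{\eta(r,t)}\,(Y_0)_t)_t$, hence is distributed like $\overline{\eta(r,\cdot)}Y$. Condition (s1) says that $F(X)$ and $(X_{t+r})_t$ have the same distribution; since the ``leak for'' relation is a purely distributional notion, this transfers the split, proving that $\overline{\eta(r,\cdot)}Y$ is a leak for $(X_{t+r})_t$.

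For (b), set $\tilde X^{(1)}_{t_1,t_2}:=X^{(1)}_{t_2,t_1}$. Writing (s2) with $(t_1,t_2)$ replaced by $(t_2,t_1)$ shows that $(X^{(1)}_{t_2,t_1+r})_{t_1,t_2}=(\tilde X^{(1)}_{t_1+r,t_2})_{t_1,t_2}$ is identically distributed to $(\overline{\eta_1(r;t_2,t_1)}\,\tilde X^{(1)}_{t_1,t_2})_{t_1,t_2}$. I then run the argument of (a) verbatim with $\tilde X^{(1)}$ in place of $X$ and the modulus-one multiplier $(t_1,t_2)\mapsto\overline{\eta_1(r;t_2,t_1)}$ in place of $\overline{\eta(r,\cdot)}$. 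Part (c) is the same as (a) with $X^{(2)}$ and $\eta_2$ replacing $X$ and $\eta$, and the 2D shift replaced by the 1D shift $t_1\mapsto t_1+r$; no coordinate swap is needed.

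I do not expect a conceptual obstacle. The only mildly delicate step is the coordinate-swap bookkeeping in (b); everything else reduces to \eqref{4.2}, the commutation of leak with multiplication recorded in Sect.~\ref{sect2}, and the fact that the ``leak for'' relation depends only on joint distributions.
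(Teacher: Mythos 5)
Your proof is correct and is essentially the paper's argument, just expressed through the auxiliary transformation $F\colon\mu\mapsto\overline{\eta(r,\cdot)}\,\mu$ and \eqref{4.2}. The paper's proof of (a) proceeds identically: a split $(X^0,X^-,X^+)$ of $X$ with leak $Y$ is multiplied by $\overline{\eta(r,\cdot)}$ to become a split of $\overline{\eta(r,\cdot)}X$, which by (s1) is distributed like $(X_{t+r})_t$, and the leak of the multiplied split is $\overline{\eta(r,\cdot)}Y$ because multiplication by a fixed function is linear and local (the fact quoted from \cite[Lemma 2.2]{IV} in Section \ref{sect2}). Your bookkeeping for (b) via the swapped field $\tilde X^{(1)}$ and the multiplier $(t_1,t_2)\mapsto\overline{\eta_1(r;t_2,t_1)}$, and for (c) with no swap, matches the paper's terse ``(b), (c) are proved similarly.''
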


\begin{proof}
(a)
We have $ Y = \Leak_1(X^0,X^-,X^+) $ for some split $(X^0,X^-,X^+)$ of $X$. By
(s1), $ ( \eta(r,t) X_{t+r} )_{t\in\R^2} $ is distributed like $X$, thus, $
(X_{t+r})_t $ is distributed like $ \overline{\eta(r,\cdot)} X $. The split
$ \( \overline{\eta(r,\cdot)} X^0, \overline{\eta(r,\cdot)}
X^-, \overline{\eta(r,\cdot)} X^+ \) $ of $ \overline{\eta(r,\cdot)} X $ is also
a split of $ (X_{t+r})_t $. Its leak $ \overline{\eta(r,\cdot)} Y $ is a leak
for $ (X_{t+r})_t $.

(b), (c) are proved similarly.
\end{proof}

\begin{remark}\label{5.17}
As noted in Section \ref{sect2} (on page 6) for dimension one, shifting back the
leak of a shifted split we get the leak around a point. In dimension two,
defining a shift transformation $ F_r $ (for $ r \in \R^2 $) by $ F_r(X) =
(X_{t+r})_t $, we observe the following. If $Y$ is a leak for $ F_r(X) $, then $
F_{-r}(Y) $ is a leak for $X$ around the line $ t_1=r_1 $. More explicitly: $
(X^0,X^-,X^+) $ is a split of $X$ if and only if $ \( F_r(X^0), F_r(X^-),
F_r(X^+) \) $ is a split of $ F_r(X) $; and if it is, and $ Y_{t_1,t_2} = \(
F_r(X^{\sgn t_1})-F_r(X^0) \)_{t_1,t_2} =  X^{\sgn t_1}_{t+r}-X^0_{t+r} $ is the
corresponding leak for $ F_r(X) $, then $ \( F_{-r}(Y) \)_{t_1,t_2} =
Y_{t_1-r_1,t_2-r_2} = X^{\sgn(t_1-r_1)}_{t_1,t_2} - X^0_{t_1,t_2} $; this is what
we call the leak (for $X$) around $ t_1=r $.

\noindent\hspace{3mm} Thus, in \ref{5.16}(a), the field $ F_{-r} \( \overline{\eta(r,\cdot)} Y \)
= \( \overline{ \eta(r;t_1-r_1,t_2-r_2)} Y_{t_1-r_1,t_2-r_2})_{t_1,t_2} $ is a
leak for $X$ around $ t_1=r $. Also, defining the (involutive) swap
transformation $ \Fswap $ by $ \( \Fswap(X) \)_{t_1,t_2} = X_{t_2,t_1} $, we
observe that $ \Leak_1(\Fswap(\dots)) = \Fswap(\Leak_2(\dots)) $; that is,
swapping (back) the leak of the swapped split, we get the leak around the other
axis.

If $Y$ is a leak for $ F_r(X) $ around $ t_2=0 $, then $ F_{-r} (Y) $ is a leak
for $X$ around $ t_2=r_2 $.

If $X$ satisfies \ref{5.15}(s1) (with some $\eta$), then $ \ti X = \Fswap(X) $
satisfies it as well (with $ \ti\eta :
(r_1,r_2;t_1,t_2) \mapsto \eta(r_2,r_1;t_2,t_1) $). And if $Y$ is a leak for
$ \ti X $, then $ \overline{ \ti\eta(r,\cdot) } Y $ is a leak for $ F_r(\ti X) $
by \ref{5.16}(a). Thus, $ \Fswap \( \overline{ \ti\eta(r,\cdot) } Y \) $ is a
leak for $ \Fswap \( F_r(\ti X) \) $ around $ t_2=0 $. That is (renaming $
r_1,r_2 $ into $ r_2,r_1 $), $ \( \overline{ \eta(r_1,r_2;t_1,t_2) }
Y_{t_2,t_1} \)_{t_1,t_2} $ is a leak for $ (X_{t_1+r_1,t_2+r_2})_{t_1,t_2} $
around $ t_2=0 $. And so, $ \( \overline{ \eta(r_1,r_2;t_1-r_1,t_2-r_2) }
Y_{t_2-r_2,t_1-r_1} \)_{t_1,t_2} $ is a leak for $X$ around $ t_2=r $.
\end{remark}

\begin{proof}[Proof of Prop.~\ref{5.15}]
We follow the proof of Prop.~\ref{4.4}, note each argument based on
Conditions \ref{4.4}(s1,s2,s3) and replace it with an argument based on
Conditions \ref{5.15}(s1,s2,s3) and Lemma~\ref{5.16}.

``Item (a) there follows from (a2) and (s2) here.''
Only $ |X^{(1)}| = \( |X^{(1)}_t| \)_t $ is relevant; and $
\( |X^{(1)}_{t_1,t_2+r}| \)_{t_1,t_2} $ is distributed like $ |X^{(1)}|  $
by \ref{5.15}(s2).\footnote{%
 Note that $ X^{(1)}_{t_2,t_1} $ is now substituted for $ X_{t_1,t_2} $
 of \cite[Def.~1.3]{IV}.}

``WLOG, $r=0$ due to (s2).'' Lemma \ref{5.16}(b) converts the given leak $
\( X_{t_2,t_1}^{(1,2)} \)_{t_1,t_2} $ for $ \( X_{t_2,t_1}^{(1)} \)_{t_1,t_2} $
into a leak for $ \( X^{(1)}_{t_2,t_1+r} \)_{t_1,t_2} $ of the same absolute
value.

``Item (a) there follows from (a1) and (s1) here.'' Again, only $|X|$ is
relevant; and it is stationary by \ref{5.15}(s1).

``Case $i=1$ \dots\ WLOG, $r=0$ due to (s1).'' 
Again, Lemma \ref{5.16}(a) converts the given leak $ X^{(1)} $ for $ X $ into a
leak for $ \( X_{t_1+r,t_2} \)_{t_1,t_2} $ multiplying by a function
$ \R^2 \to \T $; such multiplication preserves \splittability{(1,1)}
by \cite[Prop.~2.1]{IV} (generalized to the complex-valued setup).

``Case $i=2$ \dots\ WLOG, $r=0$ due to (s1).'' 
The same as above.
\end{proof}

We turn to complex-valued Gaussian random fields. Let $ \phi
: \R^2 \times \R^2 \to \C $ satisfy \eqref{4.6}, and $ G = \phi \star \wC $.

\begin{lemma}\label{5.18}
Conditions \ref{5.15}(s1,s2,s3) are satisfied by $ X = G = \phi \star \wC $
whenever there exist Borel measurable $ \tau_1, \tau_2 : \R^2 \times \R^2 \to \T
$ such that\footnote{%
 Still, $ \T = \{ z\in\C : |z|=1 \} $.}
\begin{equation}\label{5.19}
 \begin{gathered}
  \text{for all } r \in \R^2, \text{ for almost all } s,t \in \R^2 \text{
   holds}\\
  \phi(t+r,s+r) = \tau_1(r,t) \phi(t,s) \tau_2(r,s) \, .
 \end{gathered}
\end{equation}
\end{lemma}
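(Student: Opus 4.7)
The plan is to exploit translation and rotation invariance of the complex white noise $\wC$ together with the factorization \eqref{5.19}. First, a direct covariance computation shows that for every $r \in \R^2$ and every Borel $\tau : \R^2 \to \T$, the transformed noise $s' \mapsto \tau(s')\wC(s'+r)$ has the same law as $\wC$: both $\Ex \wC(\phi_1)\overline{\wC(\phi_2)} = \langle\phi_1,\phi_2\rangle$ and the complementary covariance vanish under the replacement $\phi_k \mapsto \tau\cdot\phi_k(\cdot-r)$, since $|\tau|=1$.

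For (s1), I would start from $X_{t+r} = \int_{\R^2}\phi(t+r,s)\wC(s)\,\D s$, change variables $s = s'+r$, apply \eqref{5.19}, and factor out the $t$-dependent unit:
\begin{equation*}
X_{t+r} = \tau_1(r,t)\int_{\R^2}\phi(t,s')\bigl[\tau_2(r,s')\wC(s'+r)\bigr]\,\D s'.
\end{equation*}
By the invariance observation, the bracketed noise is distributed like $\wC$, so the whole integral is distributed as a random field in $t$ like $X$. Setting $\eta(r,t) := \overline{\tau_1(r,t)}$ thus delivers (s1).

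For (s2) and (s3), I would invoke Lemma~\ref{4.10}(a) to represent $X^{(1)}$ and $X^{(2)}$ in distribution as $\phi^{(1)}\star\wC$ and $\phi^{(2)}\star\wC$, where $\phi^{(1)}(t_1,t_2;s_1,s_2) = \sqrt2(\sgn t_1)\phi(t_1,t_2;s_1,s_2)$ on $\{\sgn s_1\ne\sgn t_1\}$ (and $0$ otherwise), and symmetrically for $\phi^{(2)}$. The key observation is that the sign factor and the indicator in $\phi^{(1)}$ depend only on the first coordinates; shifting $t$ and $s$ jointly in the second coordinate leaves both intact, so \eqref{5.19} with $r' = (0,r)$ descends verbatim to $\phi^{(1)}$:
\begin{equation*}
\phi^{(1)}(t_1,t_2+r;s_1,s_2+r) = \tau_1\bigl((0,r),(t_1,t_2)\bigr)\,\phi^{(1)}(t_1,t_2;s_1,s_2)\,\tau_2\bigl((0,r),(s_1,s_2)\bigr).
\end{equation*}
Repeating the change-of-variable argument from (s1) then yields $\eta_1(r;t_1,t_2) := \overline{\tau_1((0,r),(t_1,t_2))}$, proving (s2). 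Condition (s3) is symmetric via $r' = (r,0)$ and $\phi^{(2)}$, whose sign and indicator involve only the second coordinates; the resulting choice is $\eta_2(r;t_1,t_2) := \overline{\tau_1((r,0),(t_1,t_2))}$.

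The only mild concern is that each of (s1)--(s3) demands equality of joint distributions of the full random field, not merely of one-dimensional marginals. This is automatic in the argument above: in each case the transformed field is realized as the integral of a deterministic kernel against a noise that is itself distributed like $\wC$, so the entire Gaussian law of the field is recovered. I do not foresee any further obstacle.
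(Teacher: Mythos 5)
Your proposal is correct and follows essentially the same route as the paper: for (s1) you change variables, apply \eqref{5.19}, and use translation and rotation invariance of $\wC$ to absorb $\tau_2(r,\cdot)\wC(\cdot+r)$ into a copy of $\wC$, taking $\eta=\overline{\tau_1}$; for (s2)--(s3) you appeal to Lemma~\ref{4.10}(a) and the observation that the extra factor in $\phi^{(1)}$ (resp.\ $\phi^{(2)}$) depends only on the first (resp.\ second) coordinates, so \eqref{5.19} descends to $\phi^{(1)}$ when $r_1=0$ (resp.\ to $\phi^{(2)}$ when $r_2=0$). The paper's argument is identical in substance, just terser, writing the two invariance steps as ``$\sim$'' in a single display rather than spelling them out.
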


\begin{sloppypar}
\begin{proof}
Below ``$X_t\sim Y_t$'' means that $(X_t)_t$ and $(Y_t)_t$ are identically
distributed.

(s1)
$ \eta(r,t) X_{t+r} = \eta(r,t) \iint \phi(t+r,s) \wC(s) \, \D s \sim
\eta(r,t) \iint \phi(t+r,s+r) \wC(s) \, \D s =
\eta(r,t) \iint \tau_1(r,t) \phi(t,s) \tau_2(r,s) \wC(s) \, \D s \sim
\eta(r,t) \tau_1(r,t) \iint \phi(t,s) \wC(s) \, \D s =
\eta(r,t) \tau_1(r,t) X_t $; it remains to take $ \eta(r,t)
= \overline{\tau_1(r,t)} $.

(s2)
By Lemma~\ref{4.10}, $ G^{(1)} \sim \phi^{(1)} \star \wC $, and
$ \phi^{(1)}(t_1,t_2;s_1,s_2) $ is $ \phi(t_1,t_2;s_1,s_2) $ times a factor
dependent on $ t_1,s_1 $ only. Thus, \eqref{5.19} holds for $ \phi^{(1)} $
provided that $ r_1=0 $, which is enough for getting (s2) by the argument used
above for (s1).

(s3) is similar to (s2).
\end{proof}
\end{sloppypar}

It follows from \eqref{5.19} that $ |\phi(t+r,s+r)| = |\phi(t,s)| $ for almost
all $s,t$; thus the function $ t \mapsto \int_{\R^2} |\phi(t,s)|^2 \, \D s $ is
constant a.e.\ (and therefore locally integrable as required by
\eqref{4.6}). From now on we assume that \eqref{5.19} holds and the constant
function is $1$, that is,
\begin{equation}\label{5.20}
\int_{\R^2} |\phi(t,s)|^2 \, \D s = 1 \quad \text{for almost all } t \in \R^2 \,
;
\end{equation}
and in addition,
\begin{equation}\label{5.21}
\esssup_{(s,t)\in\R^2\times\R^2} (1+|t-s|)^\bal |\phi(t,s)|^2 < \infty \quad
\text{for a given } \bal \, .
\end{equation}

Inequality \eqref{5.8} for $ G = \phi \star \wC $, without stationarity, follows
from \eqref{5.20} and \eqref{5.21} (irrespective of \eqref{5.19}). Indeed,
Lemmas \ref{4.8}, \ref{4.10}, \ref{5.3}, \ref{5.4} and inequality \eqref{5.*} do
not assume stationarity, and Lemma \ref{5.5}(b) generalizes as follows.

\begin{lemma}\label{5.22}
Let $ \bal > 4 $. Then for every $ \eps > 0 $ there exists $ C $ (dependent only
on $ \eps $, $ \bal $ and the $ \esssup $) such that for all $ M \ge C $, all $
\theta_1,\theta_2\in[0,1] $ and all $n$ the array $ \(
(1+\eps)G_{(k+\theta_1)M,(\ell+\theta_2)M} \)_{k,\ell\in\{-n,-n+1,\dots,n\}} $
is \noisy{\gC}.
\end{lemma}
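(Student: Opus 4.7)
The plan is to mimic the proof of Lemma~\ref{5.5}(b), upgrading each stationary step to a uniform one via conditions \eqref{5.20} and \eqref{5.21}. Note that all the structural machinery used in Lemma~\ref{5.5}(b) (pairwise covariance decay $\Rightarrow$ covariance operator $\ge \mathrm{Id}$ $\Rightarrow$ noisiness via subtracting an independent Gaussian array) is linear-algebraic and does not use stationarity; only the covariance decay itself has to be reproved in the non-stationary setting.

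First, write $|\phi(t,s)|^2 \le M_0 (1+|t-s|)^{-\bal}$ where $M_0$ denotes the $\esssup$ in \eqref{5.21}. For any $t,t' \in \R^2$, using $G_u = \int \phi(u,s)\wC(s)\,\D s$ and the covariance of $\wC$,
\[
 |\Ex G_t \overline{G_{t'}}| = \Big| \int_{\R^2} \phi(t,s) \overline{\phi(t',s)} \, \D s \Big| \le M_0 \int_{\R^2} \frac{\D s}{(1+|t-s|)^{\bal/2}(1+|t'-s|)^{\bal/2}} \, .
\]
Change variables $s = \frac{t+t'}{2} + v$ and set $h = \frac{t-t'}{2}$; the integral becomes $\int (1+|h-v|)^{-\bal/2}(1+|h+v|)^{-\bal/2}\,\D v$, which is exactly the integral estimated in the proof of Lemma~\ref{5.5}(b) (after trivially passing to polar coordinates) and is bounded by $\const(\bal,M_0) \cdot (1+|h|)^{-\bal/2}$. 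Thus
\[
 |\Ex G_t \overline{G_{t'}}| \le C_1 (1+|t-t'|)^{-\bal/2} \qquad \text{uniformly in } t,t' \in \R^2 .
\]
Also $\Ex |G_t|^2 = 1$ at a.e.\ $t$ by \eqref{5.20}; we may choose the grid points so that this holds at all of them.

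Second, fix $\theta_1,\theta_2\in[0,1]$ and, for each grid point $(k+\theta_1)M,(\ell+\theta_2)M)$, bound
\[
 \sum_{(m,n)\in\Z^2\setminus\{(0,0)\}} \!\!\! |\Ex G_{((k+\theta_1)M,(\ell+\theta_2)M)} \overline{G_{((k+m+\theta_1)M,(\ell+n+\theta_2)M)}}|
 \le C_1 \!\! \sum_{(m,n)\ne(0,0)} \!\! \Big(1+\tfrac{M}{\sqrt2}\sqrt{m^2+n^2}\Big)^{-\bal/2}.
\]
Since $\bal>4$, this double sum converges and tends to $0$ as $M\to\infty$, uniformly in $(k,\ell)$ and in $(\theta_1,\theta_2)$. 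Choose $C$ so that for all $M\ge C$ this sum is at most $\eps_0 := 1 - (1+\eps)^{-2}$.

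Third, for arbitrary complex coefficients $a_{k,\ell}$, the inequality $|a_{k,\ell}\overline{a_{k',\ell'}}|\le\tfrac12(|a_{k,\ell}|^2+|a_{k',\ell'}|^2)$ gives
\[
 \Ex \Big| \sum_{k,\ell} a_{k,\ell} G_{((k+\theta_1)M,(\ell+\theta_2)M)} \Big|^2
 \ge \sum_{k,\ell} |a_{k,\ell}|^2 (1-\eps_0) = \frac{1}{(1+\eps)^2} \sum_{k,\ell} |a_{k,\ell}|^2 .
\]
Equivalently, the covariance operator of the complex Gaussian array $\((1+\eps)G_{((k+\theta_1)M,(\ell+\theta_2)M)}\)_{k,\ell\in\{-n,\dots,n\}}$ dominates the identity. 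Hence one may take an independent complex Gaussian array $(Y_{k,\ell})$ whose covariance is the difference, and decompose the given array as $(Y_{k,\ell}) + (Z_{k,\ell})$ with $Z_{k,\ell}$ i.i.d.\ $\gC$ and independent of $Y$; this is exactly the definition of \noisiness{\gC}.

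There is no real obstacle: the only new ingredient beyond Lemma~\ref{5.5}(b) is the uniform bound on $|\Ex G_t\overline{G_{t'}}|$, and this is immediate because \eqref{5.21} is already formulated as a function of $|t-s|$ only, so the standard ``off-diagonal'' estimate carries over verbatim once we center the integration variable at $(t+t')/2$.
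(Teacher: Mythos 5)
Your proposal is correct and takes essentially the same route as the paper: the paper's proof is a one-line pointer ("See the proofs of Lemma 5.5 and Lemma 3.11; it is a matter of decay of the correlation function... as follows from (5.21)"), and your argument is exactly the expansion of that pointer — derive the uniform off-diagonal bound $|\Ex G_t\overline{G_{t'}}|\le C_1(1+|t-t'|)^{-\bal/2}$ from \eqref{5.21} via centering at $(t+t')/2$, then run the quadratic-form and Gaussian-subtraction argument of Lemma~\ref{3.11}/\ref{5.5} verbatim (none of which used stationarity). Two cosmetic points: the $\tfrac{M}{\sqrt2}$ in your grid-sum should be $M$ (or $\tfrac M2$ if you keep $|h|$), but since the displayed inequality is thereby only weakened it still converges to $0$ for $\bal>4$; and the remark ``we may choose the grid points so that $\Ex|G_t|^2=1$'' is slightly off since $\theta_1,\theta_2$ are arbitrary — the honest resolution is that the covariance function has a version that is identically $1$ on the diagonal, which is harmless here and is the same tacit convention the paper uses.
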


\begin{proof}
See the proofs of Lemma \ref{5.5} and Lemma \ref{3.11}; it is a matter of decay
of the correlation function, $ | \Ex G_u \overline{G_t} | = | \int_{\R^2}
\phi(u,s) \overline{\phi(t,s)} \, \D s | \le \const \cdot (1+|t-s|)^{-\bal/2} $,
as follows from \eqref{5.21}.
\end{proof}

Lemma \ref{5.9}(b) holds. Stationarity is mentioned in its proof (the second
paragraph), but all really needed is stipulated by Lemma \ref{5.22} (used
instead of Lemma \ref{5.5}). And, of course, Prop.~\ref{5.15} is used (in
concert with Lemma \ref{5.18}) instead of Prop.~\ref{4.4}. Remark \ref{5.13}
holds as before, and we arrive at a generalization of Theorem~\ref{5.14}(b).

\begin{theorem}\label{5.23}
Let positive numbers $ C, \bal, \de $ satisfy $ \de \le 1 $ and $ \bal >
2(1+\frac2\de) $. Let a Lebesgue measurable function $ \phi : \R^2\times\R^2 \to \C $
satisfy \eqref{5.19}, \eqref{5.20} and \eqref{5.21}. Let a Lebesgue
measurable function $ \psi : \C \to \R $ satisfy $ \int_\C \exp \( \frac1C
|\psi(z)| \) \E^{-|z|^2} \, \D z < \infty $, $ \int_\C \psi(z) \E^{-|z|^2} \, \D z = 0 $, and
\[
\log \int_\C \exp \bigg| \psi \Big( \frac{x+h+y}{2} \Big) - \psi \Big(
\frac{x+h-y}{2} \Big) \bigg| \cdot \frac1\pi \E^{-|x|^2} \, \D x \le C|y|^\de
\]
for all $ y,h \in \C $. Then the following random field $ \psi(\phi\star\wC) $ is
splittable:
\[
X = \psi(\phi\star\wC) \, , \quad \text{that is,} \quad X_t = \psi \bigg(
\int_{\R^2} \phi(t,s) \wC(\D s) \bigg) \, .
\]
\end{theorem}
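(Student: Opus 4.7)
The plan is to mimic the proof of Theorem~\ref{5.14}(b) step by step, replacing every use of stationarity by the twisted-stationarity coming from \eqref{5.19}, and every use of the stationary noisiness lemma by its non-stationary counterpart. Concretely, I would set $G = \phi\star\wC$, take the \splt{2} $G^{\doT,\doT}$ of Lemma~\ref{4.8}, and form the corresponding \splt{2} of $X = \psi(G)$ by $X^{\al,\be} = \psi(G^{\al,\be})$. Instead of Prop.~\ref{4.4}, I would invoke Prop.~\ref{5.15}, and verify its hypotheses (s1)--(s3) via Lemma~\ref{5.18}: condition \eqref{5.19} gives the required unimodular twists $\tau_1,\tau_2$ not only for $\phi$ but, as noted in the proof of Lemma~\ref{5.18}, for $\phi^{(1)}$ and $\phi^{(2)}$ as well, and these transfer to $X$ because $\psi$ is applied pointwise and splittability is insensitive to $\T$-valued multipliers. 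Condition (b) of Prop.~\ref{5.15} follows from $\int_\C \psi \, \gC = 0$.

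Next I would establish the analogue of Lemma~\ref{5.9}(b) in the twisted-stationary setting. Inspecting its proof, the only place where stationarity actually enters is through Lemma~\ref{5.5}(b), invoked to give \noisiness{\gC} of the rectangular arrays $((1+\eps)G_{(k+\theta_1)M,(\ell+\theta_2)M})$. Lemma~\ref{5.22} is precisely the substitute that works with only \eqref{5.20} and \eqref{5.21}, since it rests solely on the decay $|\Ex G_u\overline{G_t}|\le \const\cdot(1+|t-u|)^{-\bal/2}$, and this decay holds uniformly in $u,t$ because of \eqref{5.21}. Thus Lemma~\ref{5.9}(b), and in particular its sufficient conditions (1),~$(2_1)$--$(2_4)$ for \splittability{M^2}, remain valid for $X=\psi(\phi\star\wC)$ in this generality; Remark~\ref{5.13} then lets us relax ``$\le 2$'' to ``$<\infty$'' and waive the constant $16$ inside~(3).

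It remains to verify these relaxed conditions from the smeared Lipschitz hypothesis $g(y)\le C|y|^\de$. Condition~(1) is immediate from $\int_\C \exp(\tfrac1C|\psi|) \gC<\infty$ (combined with the $M^{-2}$ rescaling trick of Remark~\ref{3.15a}). For $(2_1')$--$(2_4')$ I would proceed exactly as in the proof of Theorem~\ref{5.14}(b): apply Lemma~\ref{3.2.6} to the countable index sets $A^{\al,\be}_\theta$, $B^{\al,\be}_\theta$ (and their one-dimensional versions) equipped with counting measure, which reduces the exponential moment bounds to the finiteness of $\sum_{t\in A^{\al,\be}_\theta} (\Ex|G^{(1)}_t|^2)^{\de/2}$ uniformly in $\theta$, and analogously for $G^{(2)}$. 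Using Lemma~\ref{4.10}, the explicit form of $\phi^{(1)}$ together with \eqref{5.21} bounds $\Ex|G^{(1)}_{t_1,t_2}|^2$ by $\const\cdot\iint_{\sgn s_1\ne\sgn t_1}(1+|t-s|)^{-\bal}\,\D s$, and the desired uniform summability then follows from Remark~\ref{4.13c} applied to the closed cones $K_1 = \{(t_1,t_2): |t_1|\ge|t_2|,\ \al t_1\ge 0,\ \be t_2\ge 0\}$ and $K_2 = \{(s_1,s_2): \al s_1\le 0\}$, whose only common point is the origin; the hypothesis $\bal > 2(1+\tfrac2\de)$ is exactly the one demanded by Remark~\ref{4.13c}. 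The cases involving $G^{(2)}$ are handled by swapping the two coordinates.

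The main obstacle, and the only real novelty compared with Theorem~\ref{5.14}(b), is bookkeeping the twist in step~1: one must check that $\phi^{(1)}$ and $\phi^{(2)}$ from Lemma~\ref{4.10} inherit a relation of the form \eqref{5.19} (with $r$ restricted to lie in the respective coordinate axis, which is enough for (s2),~(s3)), and that the $\T$-twists percolate harmlessly through $\psi$ because all conditions to be verified involve only $|X^{(i)}|$ or $|X^{(1,2)}|$, on which $\T$-multipliers act trivially. Once this compatibility is in place, the remaining decay arguments are essentially identical to those of Theorem~\ref{5.14}(b), and the proof concludes.
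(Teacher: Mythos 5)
Your proposal reproduces the paper's own proof route: form the \splt{2} via Lemma~\ref{4.8}, use Proposition~\ref{5.15} (verified through Lemma~\ref{5.18}) in place of Proposition~\ref{4.4}, replace Lemma~\ref{5.5} by Lemma~\ref{5.22} in the noisiness estimates, invoke Remark~\ref{5.13} to relax ``$\le2$'' to ``$<\infty$'' and drop the factor $16$, and reduce the remaining moment conditions to Lemma~\ref{3.2.6} and Remark~\ref{4.13c} exactly as in the proof of Theorem~\ref{5.14}(b). This matches the paper, whose proof of Theorem~\ref{5.23} is itself a terse pointer to precisely these same substitutions.
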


Here is a counterpart of Prop.~\ref{3.19}.

\begin{proposition}\label{5.24}
Let a Lebesgue measurable $ \phi : \R^2\times\R^2 \to \C $
satisfy \eqref{5.19}, \eqref{5.20}, and \eqref{5.21} for some $ \bal > 6 $. Then
the following random field $X$ is splittable:
\[
X = \log | \phi \star \wC | + \gEuler/2 \, , \quad \text{that is,} \quad X_t =
\log \bigg| \int_{\R^2} \phi(t,s) \wC(s) \, \D s \bigg| + \frac12 \gEuler \, .
\]
\end{proposition}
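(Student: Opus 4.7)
The proof should be essentially a two-dimensional echo of Proposition~\ref{3.19}, with Theorem~\ref{5.23} playing the role that Prop.~\ref{3.155}(b) played there. The plan is to apply Theorem~\ref{5.23} to $\psi(z) = \log|z| + \frac12 \gEuler$ with $\delta = 1$, noting that the hypothesis $\bal > 6$ is exactly $\bal > 2(1+\frac2\de) = 6$ for $\de=1$.

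The three conditions on $\psi$ required by Theorem~\ref{5.23} can each be checked by essentially the computation already carried out in the proof of Prop.~\ref{3.19}. First, the integrability $\int_\C \exp\!\bigl(\frac1C|\psi(z)|\bigr)\E^{-|z|^2}\,\D z < \infty$ (for $C\ge 1$) reduces in polar coordinates to $\int_0^\infty \exp|\tfrac12\log s + \tfrac12\gEuler| \cdot \E^{-s}\,\D s < \infty$, which is elementary. Second, the centering $\int_\C \psi(z)\E^{-|z|^2}\,\D z = 0$ is the standard identity $\int_0^\infty \tfrac12 \log s\cdot \E^{-s}\,\D s = -\tfrac12\gEuler$. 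Third, the ``smeared Lipschitz'' bound with $\de=1$ is exactly the content of Lemma~\ref{3.15} applied with $p=2$: after the substitution $u = (x+h)/2$, the measure $\frac1\pi \E^{-|x|^2}\,\D x$ shifted by $-h$ becomes a rotation-invariant Gaussian in $u$ (of variance $1/8$) shifted by $h/2$, and the argument pair $(x+h\pm y)/2$ becomes $u \pm y/2$. Lemma~\ref{3.15} therefore yields the bound $C|y|/2 \le C|y|$ with a constant depending only on $p=2$ and the Gaussian scale.

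The hypotheses \eqref{5.19}, \eqref{5.20}, \eqref{5.21} on $\phi$ with $\bal > 6$ are assumed directly; together they feed into Theorem~\ref{5.23} verbatim. Hence by Theorem~\ref{5.23} the random field $X = \psi(\phi \star \wC) = \log|\phi\star \wC| + \frac12\gEuler$ is splittable, which is precisely the conclusion of Prop.~\ref{5.24}.

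There is no serious obstacle here since all the heavy lifting has been placed into Theorem~\ref{5.23}, which already handles the complex-valued non-stationary Gaussian field $\phi\star\wC$ via the phase-invariance condition \eqref{5.19} (this being the point where the non-stationarity of $\phi\star\wC$ is mediated through the $\T$-valued cocycles $\tau_1,\tau_2$, while $\psi(z)=\log|z|$ is insensitive to unimodular factors so that $\psi(\phi\star\wC)$ does satisfy the hypotheses \ref{5.15}(s1)--(s3) via Lemma~\ref{5.18}). The only mild subtlety worth flagging explicitly is the variable change turning the measure $\frac1\pi \E^{-|x|^2}\D x$ on the doubled argument $(x+h\pm y)/2$ into a Gaussian on the undoubled argument $u \pm y/2$, so that Lemma~\ref{3.15} applies with the correct $\si$; this is a routine computation, not a genuine difficulty.
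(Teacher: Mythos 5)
Your proposal is correct and is precisely the expansion that the paper intends: the paper's own proof is the one-liner ``Similar to the proof of Prop.~\ref{3.19}, using Theorem~\ref{5.23} instead of Prop.~\ref{3.155}(b),'' and you have spelled out that substitution explicitly. One small stylistic note: the variable change $u=x/2$ (yielding a Gaussian of variance $\si^2=1/8$ shifted by $h/2$) is valid but slightly roundabout; since $\psi=\log|\cdot|$ is scale-invariant up to an additive constant that cancels in the difference, one can drop the $\frac12$'s inside $\psi$ outright, absorb the shift $h$ into $\mu_h$, and apply Lemma~\ref{3.15} directly with $\si=1/\sqrt2$ (as the proof of Prop.~\ref{3.19} indicates) — same conclusion, same constant up to a factor.
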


\begin{proof}
Similar to the proof of Prop.~\ref{3.19}, using Theorem~\ref{5.23} instead of
Prop. \ref{3.155}(b).
\end{proof}

\subsection{Gaussian Entire Function}

We return to the case introduced in Sect.~\ref{sect1}: $ G_{t_1,t_2} = F^*(z) =
F(z) \E^{-|z|^2/2} $ for $ z = t_1+\I t_2 $, where $F(z)$ is given by
\eqref{1.1}, and $ X_t = \psi(G_t) $ for $ t \in \R^2 $, where $ \psi(z) = \log
|z| + \frac12 \gEuler $.

\begin{theorem}\label{5.25}
The following random field is splittable:
\[
X = \psi(G), \quad \text{that is,} \quad X_t = \log|G_t| + \frac12 \gEuler \;
\text{ for } t \in \R^2.
\]
\end{theorem}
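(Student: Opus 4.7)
The approach is to apply Proposition~\ref{5.24} to an explicit complex kernel $\phi$, the one suggested by the coherent-state (Bargmann--Fock) description of the G.E.F. Identifying $\R^2$ with $\C$ via $t \leftrightarrow t_1+\I t_2$, I would set
\[
\phi(t,s) = \frac{1}{\sqrt{\pi}} \exp\Bigl( -\tfrac12(|t|^2+|s|^2) + t\bar s \Bigr).
\]
A direct Gaussian integral (separating real and imaginary parts of $s$ and completing the square) gives $\int_\C \phi(t,s)\overline{\phi(u,s)}\,\D s = \exp\bigl(t\bar u - \tfrac12(|t|^2+|u|^2)\bigr)$, which coincides with $\Ex F^*(t)\overline{F^*(u)}$; and $\int_\C \phi(t,s)\phi(u,s)\,\D s = 0$ matches the vanishing pseudo-covariance $\Ex F^*(t)F^*(u)=0$. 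Thus $\phi\star\wC$ and $G$ agree in distribution as proper complex Gaussian random fields, and, since splittability depends only on the law, it suffices to prove splittability of $\psi(\phi\star\wC) = \log|\phi\star\wC| + \gEuler/2$.

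To apply Proposition~\ref{5.24}, I would verify its three standing assumptions on $\phi$. Condition~\eqref{5.20} is the $u=t$ case of the covariance identity above. Condition~\eqref{5.21} is immediate from the super-polynomial decay $|\phi(t,s)|^2 = \pi^{-1}\E^{-|t-s|^2}$, which ensures $(1+|t-s|)^\bal |\phi(t,s)|^2$ is bounded for every $\bal$, in particular for some $\bal>6$. For the twist Condition~\eqref{5.19}, an elementary expansion of the quadratic form gives
\[
\phi(t+r,s+r) \;=\; \phi(t,s) \cdot \E^{\I\Im(t\bar r)} \cdot \E^{-\I\Im(s\bar r)},
\]
so I can take $\tau_1(r,t)=\E^{\I\Im(t\bar r)}$ and $\tau_2(r,s)=\E^{-\I\Im(s\bar r)}$; both are Borel measurable maps into $\T$. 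Proposition~\ref{5.24} then delivers splittability of $X$.

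The only genuine difficulty is recognizing the right $\phi$. The field $F^*$ is not translation-stationary; it is merely stationary up to the unimodular Heisenberg phase $\E^{\I\Im(t\bar r)}$, so no kernel of the form $\phi(t-s)$ could possibly represent it. This is precisely the obstacle that Subsection~\ref{5c} and the generalized stationarity conditions (s1)--(s3) of Proposition~\ref{5.15}, with the twist factors $\tau_1,\tau_2$, were designed to overcome. Once the Bargmann--Fock kernel above is written down, the rest is routine bookkeeping of Gaussian integrals, and Theorem~\ref{5.25} follows.
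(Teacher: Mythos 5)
Your proposal is correct and takes essentially the same route as the paper. The kernel you write down, $\phi(t,s) = \frac1{\sqrt\pi}\exp(-\tfrac12(|t|^2+|s|^2)+t\bar s)$, is identically the paper's $\frac1{\sqrt\pi}\exp(-\I\,t\wedge s - \tfrac12|s-t|^2)$ (since $t\wedge s = -\Im(t\bar s)$ and $\Re(t\bar s)+\I\Im(t\bar s)=t\bar s$), your twist factors $\tau_1(r,t)=\E^{\I\Im(t\bar r)}$ and $\tau_2(r,s)=\E^{-\I\Im(s\bar r)}$ coincide with the paper's $\exp(\I\,r\wedge t)$ and $\exp(-\I\,r\wedge s)$, and the verification of \eqref{5.19}--\eqref{5.21} followed by Prop.~\ref{5.24} is exactly the paper's argument; your observation that the pseudo-covariance vanishes (so that the law of the complex Gaussian field, not merely its covariance, is matched) is a point the paper leaves implicit, and is a welcome clarification.
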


\begin{proof}
The covariance function is (see \cite[Sect.~3.1]{NS-ICM})
\[
\Ex F^*(z) \overline{F^*(w)} = \exp \Big( z\overline w - \frac12 |z|^2 - \frac12
|w|^2 \Big) = \exp \Big( \I\Im(z\overline w) - \frac12|z-w|^2 \Big) \, ,
\]
that is,
\[
\Ex G_s \overline{G_t} = \exp \Big( \! -\I s \wedge t - \frac12 |s-t|^2 \Big)
\]
(here $ s \wedge t = (s_1,s_2) \wedge (t_1,t_2) = s_1 t_2 - s_2 t_1 $).

We define $ \phi : \R^2 \times \R^2 \to \C $ by
\[
\phi(t,s) = \frac1{\sqrt\pi} \exp \Big( \! - \I t \wedge s - \frac12 |s-t|^2 \Big)
\]
and note that the covariance function of $ \phi \star \wC $ is equal to the
covariance function of $G$, that is,
\[
\Ex G_u \overline{G_v} = \int_{\R^2} \phi(u,s) \overline{\phi(v,s)} \, \D s
\quad \text{for all } u,v \in \R^2 \, ,
\]
which is well-known, see for instance \cite[(7.49)]{KS} or \cite[Lemma
6.2]{2008}. Thus, the Gaussian random fields $G$ and $ \phi\star\wC $ are
identically distributed. Clearly, $\phi$ satisfies \eqref{5.20}, and
\eqref{5.21} for every $\bal$ (due to exponential decay). Also, $ \phi $
satisfies \eqref{5.19} for $ \tau_1(r,t) = \exp(\I r\wedge t) $, $ \tau_2(r,s) =
\exp(-\I r\wedge s) $, since
\begin{multline*}
\phi(t+r,s+r) = \frac1{\sqrt\pi} \exp \Big( -\I (t+r) \wedge (s+r) - \frac12
 |s-t|^2 \Big) = \\
\exp(-\I t\wedge r) \phi(t,s) \exp(-\I r\wedge s) \, .
\end{multline*}
By Prop.~\ref{5.24}, $ \psi(\phi\star\wC) $ is splittable, and $ \psi(G) $ as
well.
\end{proof}

\appendixtitleon
\appendixtitletocon
\begin{appendices}

\section[Reader's guide to parts I--V]
  {\raggedright Reader's guide to parts I--V}
\label{appendix}
Here is a guide for a reader interested in the proof of moderate deviations (and
asymptotic normality) for zeroes of the Gaussian Entire Function and not
interested in more general results (at least, for now). Hopefully, it may help
to get insight into the long proof. But do it on your own risk, taking the
burden of filling small gaps and resolving small discrepancies.

In Part I (that is, \cite{I}) read Lemma 2a8, Lemma 2d1, and proofs of
Corollaries 1.7, 1.8 (at the end of Sect.~3b), taking Theorem 1.6 for
granted. Throw away the rest of Part I. If you like, see also Lemma 6.2 of Part
III.

Interpret ``random field'' as ``random element of $L_{1,\text{loc}}(\R^2)$'',
that is, a random equivalence class of locally integrable functions $ \R^2 \to
\R $, for the real-valued case; for the complex-valued case, use $
L_{1,\text{loc}}(\R^2\to\C)$.

For the start, interpret splittability of a stationary random field according to
Proposition \ref{4.4} (no need to go beyond these sufficient conditions), but
first, read Section \ref{sect2} till Remark \ref{2.rem}, and Subsection \ref{4a}.

Here is a top-down sketch of main results of Parts II--V.

\vspace{2pt}

\textsc{Part V, Section 1. } Moderate deviations (and asymptotic normality as
well) for zeroes of the Gaussian Entire Function follow easily from these for a
well-known stationary random field on the complex plane.

\textsc{Part V, Sections 4-5. } The random field mentioned above is splittable
(Theorem \ref{5.25}).

\textsc{Part IV. } Linear response holds for every stationary splittable random
field (Theorem 1.5) and implies moderate deviations (Corollary 1.6), as well as
asymptotic normality (Corollary 1.7). The proof uses Parts II and III.

\textsc{Part III. } Linear response with boxes (rather than test functions)
holds for every stationary splittable random field (Theorem 1.1). The proof uses
Part II.

\textsc{Part II. } An upper bound on the linear response for boxes holds for
every (not just stationary) splittable random field (Theorem 1.5).

\vspace{2pt}

The transition from linear response to moderate deviations is shown in Part I,
Sect. 3b (proof of Corollary 1.7) and detailed in Part III, Lemma 6.2. The
transition from linear response to asymptotic normality is shown in Part I,
Sect. 3b (proof of Corollary 1.8).

The main result of Part IV requires stationarity, and nevertheless, its proof
involves non-stationary random fields (of the form $ \phi(t) X_t $ where $ X_t $
is stationary, see Sect.~5 there). The arguments of Part II are reused when
proving Proposition 3.12 of Part IV (similar to Proposition 3.6 of Part II), to
be applied in Section 4 (of Part IV) when estimating a supremum over random
fields that need not be stationary, see (4.1) there. This is why stationarity is
not assumed in Part II.

For better understanding splittability beyond stationarity (Definition 1.3 of
Part IV), read the six items before that definition, the two footnotes to that
definition, and (in this Part V) Definition \ref{2.spli}. As an exercise,
generalize Proposition \ref{4.4} to nonstationary random fields on the plane.

Interpret uniform splittability (of a family of random fields, in Part II) as
existence of $C$ such that each random field in the family is \splittable{C}.

I apologize for the fuss. Thank you for your interest. Good luck!

\end{appendices}

\bigskip
\filbreak
{
\small
\begin{sc}
\parindent=0pt\baselineskip=12pt
\parbox{4in}{
Boris Tsirelson\\
School of Mathematics\\
Tel Aviv University\\
Tel Aviv 69978, Israel
\smallskip
\par\quad\href{mailto:tsirel@post.tau.ac.il}{\tt
 mailto:tsirel@post.tau.ac.il}
\par\quad\href{http://www.tau.ac.il/~tsirel/}{\tt
 http://www.tau.ac.il/\textasciitilde tsirel/}
}

\end{sc}
}
\filbreak

\end{document}